\documentclass[11pt,reqno]{amsart}
\usepackage{amsmath,amssymb,amsthm,mathrsfs,enumerate,bm,xcolor,multirow,pbox}
\usepackage{graphicx,color,framed,tikz}
\usepackage[colorlinks,linkcolor=red,citecolor=blue,urlcolor=blue]{hyperref}
\usepackage{enumitem}
\setlist{leftmargin=5mm}
\allowdisplaybreaks[4]
\numberwithin{equation}{section}
\newcommand{\N}{\mathbb{N}}
\newcommand{\R}{\mathbb{R}}

\newcommand{\E}{\mathbb{E}}
\newcommand{\Prob}{\mathbb{P}}
\newcommand{\G}{\mathbb{G}}

\newcommand{\pnorm}[2]{\lVert#1\rVert_{#2}}

\newcommand{\biggpnorm}[2]{\bigg\lVert#1\bigg\rVert_{#2}}
\newcommand{\abs}[1]{\lvert#1\rvert}
\newcommand{\bigabs}[1]{\big\lvert#1\big\rvert}
\newcommand{\biggabs}[1]{\bigg\lvert#1\bigg\rvert}

\renewcommand{\epsilon}{\varepsilon}

\renewcommand{\d}[1]{\mathrm{d}#1}

\newcommand{\floor}[1]{\left\lfloor #1 \right\rfloor}

\renewcommand{\tilde}{\widetilde}
\let\liminf\relax
\DeclareMathOperator*\liminf{\underline{lim}}
\let\limsup\relax
\DeclareMathOperator*\limsup{\overline{lim}}
\DeclareMathOperator*{\argmax}{arg\,max}
\DeclareMathOperator*{\argmin}{arg\,min}
%\renewcommand{\to}{\longrightarrow}

%\renewcommand{\arraystretch}{1.5}

%% equations

\AtBeginDocument{%
	\def\MR#1{}
}

\theoremstyle{definition}\newtheorem{problem}{Problem}[section]
\theoremstyle{definition}
\theoremstyle{remark}\newtheorem{assumption}{Assumption}

\theoremstyle{remark}\newtheorem{remark}[problem]{Remark}
\theoremstyle{definition}
\theoremstyle{plain}\newtheorem{theorem}[problem]{Theorem}
\theoremstyle{plain}
\theoremstyle{plain}\newtheorem{lemma}[problem]{Lemma}
\theoremstyle{plain}\newtheorem{proposition}[problem]{Proposition}
\theoremstyle{plain}\newtheorem{corollary}[problem]{Corollary}
\theoremstyle{plain}

%%%============================
%% Make row numbers
%\RequirePackage{lineno}
%\linenumbers
%
%\newcommand*\patchAmsMathEnvironmentForLineno[1]{%
%	\expandafter\let\csname old#1\expandafter\endcsname\csname #1\endcsname
%	\expandafter\let\csname oldend#1\expandafter\endcsname\csname end#1\endcsname
%	\renewenvironment{#1}%
%	{\linenomath\csname old#1\endcsname}%
%	{\csname oldend#1\endcsname\endlinenomath}}% 
%\newcommand*\patchBothAmsMathEnvironmentsForLineno[1]{%
%	\patchAmsMathEnvironmentForLineno{#1}%
%	\patchAmsMathEnvironmentForLineno{#1*}}%
%\AtBeginDocument{%
%	\patchBothAmsMathEnvironmentsForLineno{equation}%
%	\patchBothAmsMathEnvironmentsForLineno{align}%
%	\patchBothAmsMathEnvironmentsForLineno{flalign}%
%	\patchBothAmsMathEnvironmentsForLineno{alignat}%
%	\patchBothAmsMathEnvironmentsForLineno{gather}%
%	\patchBothAmsMathEnvironmentsForLineno{multline}%
%}
%%%============================

\begin{document}

\title[Set structured ERMs are rate optimal]{Set structured global empirical risk minimizers are rate optimal in general dimensions}
\thanks{The research of Q. Han is partially supported by NSF DMS-1916221.}

\author[Q. Han]{Qiyang Han}

\address[Q. Han]{
Department of Statistics, Rutgers University, Piscataway, NJ 08854, USA.
}
\email{qh85@stat.rutgers.edu}

\date{\today}

\keywords{empirical process, empirical risk minimization, classification, nonparametric regression, density estimation, non-Donsker}
\subjclass[2000]{60F17, 62E17}
\maketitle

\begin{abstract}
Entropy integrals are widely used as a powerful empirical process tool to obtain upper bounds for the rates of convergence of global empirical risk minimizers (ERMs), in standard settings such as density estimation and regression. The upper bound for the convergence rates thus obtained typically matches the minimax lower bound when the entropy integral converges, but admits a strict gap compared to the lower bound when it diverges. Birg\'e and Massart \cite{birge1993rates} provided a striking example showing that such a gap is real with the entropy structure alone: for a variant of the natural H\"older class with low regularity, the global ERM actually converges at the rate predicted by the entropy integral that substantially deviates from the lower bound. The counter-example has spawned a long-standing negative position on the use of global ERMs in the regime where the entropy integral diverges, as they are heuristically believed to converge at a sub-optimal rate in a variety of models.

The present paper demonstrates that this gap can be closed if the models admit certain degree of `set structures' in addition to the entropy structure. In other words, the global ERMs in such set structured models will indeed be rate-optimal, matching the lower bound even when the entropy integral diverges. The models with set structures we investigate include (i) image and edge estimation, (ii) binary classification, (iii) multiple isotonic regression, (iv) $s$-concave density estimation, all in general dimensions when the entropy integral diverges. Here set structures are interpreted broadly in the sense that the complexity of the underlying models can be essentially captured by the size of the empirical process over certain class of measurable sets, for which matching upper and lower bounds are obtained to facilitate the derivation of sharp convergence rates for the associated global ERMs.
\end{abstract}

%\tableofcontents

\section{Introduction}

\subsection{Overview}

Empirical risk minimization (ERM) is one of the most widely used statistical procedures for the purpose of estimation and inference. Theoretical properties for various ERMs, in particular in terms of rates of convergence, have been intensively investigated by various authors \cite{birge1983approximation,birge1993rates,barron1999risk,van1987new,van1990estimating,van1993hellinger,van1995method,wong1995probability,van1996weak,van2000empirical,koltchinskii2006local}, in a number of by-now standard settings. To motivate our discussion, let us focus on the standard Gaussian regression setting: Let $X_1,\ldots,X_n$ be i.i.d. covariates taking value in $(\mathcal{X},\mathcal{A})$ with law $P$, and the responses $Y_i$'s are given by
\begin{align}\label{model:reg}
Y_i = f_0(X_i)+\xi_i,\quad i=1,\ldots,n,
\end{align}
where $\xi_i$'s are i.i.d. $\mathcal{N}(0,1)$, and $f_0 $ belongs to a uniformly bounded class  $\mathcal{F} \subset L_\infty(1)$. One canonical global ERM in the regression model (\ref{model:reg}) is the least squares estimator (LSE):
\begin{align*}
\widehat{f}_n \in \argmin_{f \in \mathcal{F}} \sum_{i=1}^n (Y_i-f(X_i))^2.
\end{align*}
The performance of $\widehat{f}_n$ is usually evaluated through the risk under squared $L_2$ loss  $\E_{f_0} \pnorm{\widehat{f}_n-f_0}{L_2(P)}^2$, or its `probability' version.

The seminal work of Birg\'e and Massart \cite{birge1993rates} (and other references cited above) shows that an upper bound $\bar{r}_n^2$  for the risk $\E_{f_0} \pnorm{\widehat{f}_n-f_0}{L_2(P)}^2$ can be obtained by solving
\begin{align}\label{ineq:upper_bound_risk}
\int_{c\bar{r}_n^2}^{\bar{r}_n} \sqrt{\log \mathcal{N}_{[\,]} (\epsilon, \mathcal{F}, L_2(P)) }\ \d{\epsilon} \asymp \sqrt{n}\cdot \bar{r}_n^2.
\end{align}
Here $\mathcal{N}_{[\,]}(\epsilon,\mathcal{F},L_2(P))$ is the $\epsilon$-bracketing number of $\mathcal{F}$ under $L_2(P)$. On the other hand, a lower bound $\underline{r}_n^2$ for the risk, often evaluated in a minimax framework, i.e. $\inf_{\widetilde{f}_n}\sup_{f_0 \in \mathcal{F}} \E_{f_0} \pnorm{\widetilde{f}_n-f_0}{L_2(P)}^2\geq \underline{r}_n^2$, can be obtained (cf. \cite{birge1983approximation,yang1999information}) via a different equation
\begin{align}\label{ineq:lower_bound_risk}
\underline{r}_n \sqrt{ \log \mathcal{N}(\underline{r}_n, \mathcal{F}, L_2(P))}\asymp \sqrt{n}\cdot \underline{r}_n^2,
\end{align}
where $\mathcal{N}(\epsilon,\mathcal{F},L_2(P))$ is the $\epsilon$-covering number of $\mathcal{F}$ under $L_2(P)$. Note that the left hand side of (\ref{ineq:upper_bound_risk}) is no smaller than the left hand side of (\ref{ineq:lower_bound_risk}), so we always have $\underline{r}_n\lesssim \bar{r}_n$. Suppose for now that the difference in the covering and bracketing entropy can be ignored, and it holds for some $\alpha>0$ that
\begin{align}\label{ineq:L2_entropy_intro}
\log \mathcal{N}(\epsilon,\mathcal{F},L_2(P))\asymp \log \mathcal{N}_{[\,]}(\epsilon,\mathcal{F},L_2(P)) \asymp \epsilon^{-2\alpha}.
\end{align}
The parameter $\alpha>0$ measures the complexity of $\mathcal{F}$, and is closely related to the `smoothness' of $\mathcal{F}$, cf. \cite{van1996efficient,van2000empirical,gine2015mathematical}. Solving the equations (\ref{ineq:upper_bound_risk}) and (\ref{ineq:lower_bound_risk}) yields that
\begin{align}\label{def:rate_intro}
\underline{r}_n\asymp n^{-\frac{1}{2(1+\alpha)}},\quad \bar{r}_n \asymp  \big(n^{-\frac{1}{2(1+\alpha)}} \vee n^{-\frac{1}{4\alpha}}\big)\sqrt{\log^{\bm{1}(\alpha=1)} n}.
\end{align}
Modulo the logarithmic factor in the boundary case $\alpha=1$, we see a somewhat strange phase-transition phenomenon:
\begin{itemize}
	\item If $\alpha \in (0,1)$, $
	0<\liminf_n \bar{r}_n/\underline{r}_n \leq \limsup_n \bar{r}_n/\underline{r}_n<\infty$. In this regime, $\mathcal{F}$ is \emph{Donsker} since a central limit theorem in $\ell^\infty(\mathcal{F})$ holds for the empirical process due to the convergence of the bracketing entropy integral at $0$, cf. \cite[Section 2.5.2]{van1996weak}.
	\item If $\alpha>1$, $
	\liminf_n \bar{r}_n/\underline{r}_n = \infty$. In this regime, $\mathcal{F}$ is \emph{non-Donsker} since there does not exist a central limit theorem in $\ell^\infty(\mathcal{F})$ for the empirical process---the limiting Brownian bridge process indexed by $\mathcal{F}$ is not sample bounded.
\end{itemize}
Although at this point (\ref{ineq:upper_bound_risk}) only gives an upper bound for $\bar{r}_n$, Birg\'e and Massart \cite{birge1993rates} showed by a stunning example that in the regime $\alpha>1$, $\bar{r}_n$ can actually be attained (up to logarithmic factors) for the global ERM (called `minimum contrast estimators' therein) over a slightly constructed $\mathcal{F}$ based on H\"older classes on $[0,1]$ with smoothness less than $1/2$. Consequently, there is a genuine gap between the upper bound $\bar{r}_n$ and the lower bound $\underline{r}_n$ obtained from general empirical process techniques based on $L_2$ entropy structures (\ref{ineq:upper_bound_risk}) and (\ref{ineq:lower_bound_risk}) alone, in the regime $\alpha>1$. 

The counter-example in \cite{birge1993rates} results in a long-standing negative position on the use of global ERMs in the regime $\alpha>1$, as they are heuristically believed to be rate-suboptimal in various problems falling into the non-Donsker regime, beyond the natural setting of H\"older-type smoothness classes, cf. \cite{van2000empirical,seregin2010nonparametric,guntuboyina2012optimal,kim2016global}, just to name a few references. A common (but perhaps vague) heuristic is that when $\alpha>1$, the class $\mathcal{F}$ is too `massive' for global ERMs to achieve the optimal rate.

It should be mentioned that the rate sub-optimality phenomenon is due to the \emph{global} nature of ERM that searches over the entire parameter space, since it is easy to construct a `theoretical' rate-optimal estimator by searching over certain maximal packing sets of $\mathcal{F}$ even in the regime $\alpha>1$ (usually known as the `sieve' estimator \cite{grenander1981abstract,lecam1973convergence}). For instance, the LSE over a maximal $\underline{r_n}$-packing set of $\mathcal{F}$ typically leads to the desired optimal rate of convergence. Such a theoretical construction often occurs in a minimax approach for a given statistical model, cf. \cite{mammen1995asymptotical,guntuboyina2012optimal,brunel2013adaptive}.

At a deeper level from the perspective of empirical process theory, the upper bound (\ref{ineq:upper_bound_risk}) comes from the Dudley's entropy integral, and the lower bound (\ref{ineq:lower_bound_risk}) is inherited with Sudakov minorization. From the recent work \cite{saumard2012optimal,chatterjee2014new,van2015concentration,han2017sharp}, it is now understood that the  risk $r_n^2\equiv \E_{f_0}\pnorm{\widehat{f}_n-f_0}{L_2(P)}^2$ can be \emph{completely} characterized (at least in the simple Gaussian regression model with uniformly bounded $\mathcal{F}$), by the following (not fully rigorous but essential)\footnote{Rigorously, $r_n$ is determined by the location of the maxima of the map $r\mapsto \sup_{f \in \mathcal{F}-f_0: \pnorm{f}{L_2(P)}\leq r} \G_n(\xi f-f^2)-\sqrt{n}r^2$ provided it exists uniquely, cf. \cite{van2015concentration,han2017sharp}. For Gaussian errors $\xi_i$'s and uniformly bounded $\mathcal{F}$, the order of $r_n$ can typically be obtained by matching upper and lower moment estimates for the LHS of (\ref{ineq:risk_characterization}).} equation:
\begin{align}\label{ineq:risk_characterization}
\E \sup_{f \in \mathcal{F}-f_0: \pnorm{f}{L_2(P)}\leq r_n} \abs{\G_n(f)} \asymp \sqrt{n}\cdot r_n^2.
\end{align}
Here $\G_n \equiv \sqrt{n}(\Prob_n-P)$ is the empirical process. Since Dudley's entropy integral provides an upper bound, while the Sudakov minorization gives a lower bound, for the empirical process in (\ref{ineq:risk_characterization}) as soon as it enters the `Gaussian domain' (= for $n$ large in our case), the only possibility for which $\bar{r}_n$ and $\underline{r}_n$ do not match lies in situations where the entropy integral bound deviates substantially from the Sudakov minorization. This is indeed the case in the non-Donsker regime $\alpha>1$: under a variant of the $L_2$ entropy condition (\ref{ineq:L2_entropy_intro}) (see (\ref{ineq:L2_entropy})), standard bounds lead to the estimates
\begin{align}\label{ineq:ep_size_classic_nondonsker}
n^{(\alpha-1)/2(\alpha+1)}\lesssim \E \sup_{f \in \mathcal{F}} \abs{\G_n(f)}&\lesssim n^{(\alpha-1)/2\alpha}.
\end{align}
The upper and lower bounds above do not match, and neither of them can be improved without further structural assumptions. In particular, (\ref{ineq:ep_size_classic_nondonsker}) leads to the discrepancy between $\bar{r}_n$ and $\underline{r}_n$ in (\ref{def:rate_intro}) for non-Donsker $\mathcal{F}$'s.

%\subsection{Set-structured global ERMs can be rate-optimal in the non-Donsker regime}
Despite the strong suspicion in the literature (cited above) that the actual rate $r_n$ of global ERMs will likely match $\bar{r}_n$ which has a strict gap compared to the minimax lower bound $\underline{r}_n$, there appears recently some surprising special cases in which global ERMs are proved to be rate optimal even in the regime $\alpha>1$. One example is given by the multiple isotonic regression model studied by the author in \cite{han2017isotonic}. When $d\geq 3$, by the entropy estimate in \cite{gao2007entropy}, the class of multiple isotonic functions is in the non-Donsker regime $\alpha>1$, but interestingly \cite{han2017isotonic} proved that the natural LSE (= global ERM) is still minimax rate-optimal (up to logarithmic factors) in $L_2$ loss. The proof techniques in \cite{han2017isotonic} are rather intricate and somewhat indirect, so they unfortunately do not shed light on why the LSE must be rate-optimal (see Remark \ref{rmk:isotonic_reg} for more technical details).

The purpose of the present paper is to demonstrate a general underlying mechanism for the rate-optimality phenomenon for global ERMs beyond the isotonic LSE in general dimensions as mentioned above. This amounts to the identification of a sub-family of $\mathcal{F}$'s satisfying the $L_2$ entropy condition (\ref{ineq:L2_entropy_intro}) (or see its variant (\ref{ineq:L2_entropy})), in which the the associated global ERMs remain rate-optimal. As one may expect, the key step is to close the gap between the upper and lower bounds in (\ref{ineq:ep_size_classic_nondonsker}) under suitable structural assumptions on $\mathcal{F}$, and for the purpose of rate-optimality, one should aim at improving the upper bound $n^{(\alpha-1)/2\alpha}$ to match the lower bound $n^{(\alpha-1)/2(\alpha+1)}$ in (\ref{ineq:ep_size_classic_nondonsker}). We show (cf. Theorem \ref{thm:upper_bound_ep}) that this is indeed possible for $\mathcal{F}\subset L_\infty(1)$ in the non-Donsker regime $\alpha>1$, under a stronger $L_1$ entropy condition:
\begin{align}\label{ineq:L1_entropy_intro}
\log \mathcal{N}_{[\,]}(\epsilon^2, \mathcal{F},L_1(P)) \lesssim \epsilon^{-2\alpha}.
\end{align}
An important class of $\mathcal{F}$ that verifies (\ref{ineq:L1_entropy_intro}) is the class of indicators indexed by a class of measurable sets.  More specifically, for a class of measurable sets $\mathscr{C}$, as the $L_1(P)$ size of any element $\bm{1}_C$ where $C \in \mathscr{C}$ is the same as its squared $L_2(P)$ size, (\ref{ineq:L1_entropy_intro}) is automatically verified provided the $L_2$ entropy condition (\ref{ineq:L2_entropy_intro}) (or (\ref{ineq:L2_entropy})) is satisfied. Therefore, under the prescribed $L_2$ entropy condition alone, as long as the $L_2$-size of $\mathscr{C}$ is not too small, it holds for $\alpha\neq 1$ that
\begin{align}\label{ineq:size_ep_intro}
\E \sup_{C \in \mathscr{C}(\sigma)}\abs{\G_n(C)} \asymp \max\{ \sigma^{1-\alpha}, n^{(\alpha-1)/2(\alpha+1)} \}.
\end{align}
Here $\mathscr{C}(\sigma)\equiv\{C \in \mathscr{C}: P(C)\leq \sigma^2\}$, and for a measurable set $C$, $\G_n(C)\equiv \G_n(\bm{1}_C)$. For $\alpha>1$, the empirical process (\ref{ineq:size_ep_intro}) is in the non-Donsker regime, and our estimate (\ref{ineq:size_ep_intro}) is still sharp in this challenging regime thanks to the improved estimate of the generic bound (\ref{ineq:ep_size_classic_nondonsker}) due to the $L_1$ entropy condition (\ref{ineq:L1_entropy_intro}).

In light of (\ref{ineq:size_ep_intro}), we will show that in models with certain ``set structures'', the global ERMs will achieve the minimax optimal rates of convergence, i.e. $\bar{r}_n\asymp r_n \asymp \underline{r}_n$ (up to logarithmic factors) even in the regime $\alpha>1$ when the entropy integral diverges. Here ``set structures'' are interpreted broadly in the sense that the size of the underlying empirical process (\ref{ineq:risk_characterization}) indexed by $\mathcal{F}$ can be characterized by an empirical process indexed by certain class of measurable sets for which an estimate of the type (\ref{ineq:size_ep_intro}) is possible. This concept will be illustrated throughout a detailed study on the risk behavior (or rates of convergence) for the natural global ERMs in the following models:
\begin{enumerate}
	\item Image and edge estimation;
	\item Binary classification;
	\item Multiple isotonic regression (revisited);
	\item $s$-concave density estimation,
\end{enumerate}
all of which will be considered in general dimensions, where the problems necessarily fall into the non-Donsker regime $\alpha>1$. In the special case of the multiple isotonic regression model, our new techniques present a much easier and intuitive proof (compared to the previous work \cite{han2017isotonic}) that explains the reason why the natural least squares estimator is indeed rate minimax (up to logarithmic factors) for $d\geq 3$---the complexity of the isotonic LSE is captured by that of the class of upper and lower sets that arise naturally in the min-max representation of the isotonic LSE, cf. \cite{robertson1988order}.

\subsection{Related works}

Prior upper bounds for the empirical process (\ref{ineq:size_ep_intro}) in the regime $\alpha>1$ are obtained in, e.g. \cite[Theorem 2]{dudley1982empirical} or \cite[Theorem 11.4]{dudley1999uniform}, with additional logarithmic factors and in the weaker `in probability' form. This paper provides stronger matching upper and lower bounds for the expected supremum of the empirical process in (\ref{ineq:size_ep_intro}), and in fact the bounds hold in much greater generality; see Theorem \ref{thm:upper_bound_ep} for precise statements. 

The major part of this work is based on Chapter 4 of the author's University of Washington Ph.D. thesis in 2018. During the preparation the paper, the author becomes aware of the very nice work \cite{dagan2019log} which derives, among other things, global risk bounds for the log-concave (=$0$-concave) maximum likelihood estimators (MLEs) based on a reduction scheme of \cite{carpenter2018near} and an upper bound similar to (\ref{ineq:size_ep_intro}).  Here we prove that the rate-optimality in example (4) holds for the maximum regime of $s$ in which the $s$-concave MLE exists. See Remark \ref{rmk:s_concave_reduction} for more technical remarks.

\subsection{Organization}
The rest of the paper is organized as follows. Section \ref{section:ep_sharp_bounds} is devoted to the new upper and lower bounds for the size of the empirical process indexed by a class of functions satisfying certain special entropy conditions that includes the class of measurable sets. Applications of these new bounds to the models mentioned above are detailed in Section \ref{section:application}. For clarity of presentation, proofs are deferred to Sections \ref{section:proof_ep}-\ref{section:proof_application}, and the Appendix.

\subsection{Notation}\label{section:notation}
For a real-valued random variable $\xi$ and $1\leq p<\infty$, let $\pnorm{\xi}{p} := \big(\E\abs{\xi}^p\big)^{1/p} $ denote the ordinary $p$-norm. 

For a real-valued measurable function $f$ defined on $(\mathcal{X},\mathcal{A},P)$, $\pnorm{f}{L_p(P)}\equiv \pnorm{f}{P,p}\equiv \big(P\abs{f}^p)^{1/p}$ denotes the usual $L_p$-norm under $P$, and $\pnorm{f}{\infty}\equiv \sup_{x \in \mathcal{X}} \abs{f(x)}$. $f$ is said to be $P$-centered if $Pf=0$. $L_p(g,B)$ denotes the $L_p(P)$-ball centered at $g$ with radius $B$. For simplicity we write $L_p(B)\equiv L_p(0,B)$. 

Throughout the article $\epsilon_1,\ldots,\epsilon_n$ will be i.i.d. Rademacher random variables independent of all other random variables. $C_{x}$ will denote a generic constant that depends only on $x$, whose numeric value may change from line to line unless otherwise specified. $a\lesssim_{x} b$ and $a\gtrsim_x b$ mean $a\leq C_x b$ and $a\geq C_x b$ respectively, and $a\asymp_x b$ means $a\lesssim_{x} b$ and $a\gtrsim_x b$ [$a\lesssim b$ means $a\leq Cb$ for some absolute constant $C$]. For two real numbers $a,b$, $a\vee b\equiv \max\{a,b\}$ and $a\wedge b\equiv\min\{a,b\}$. Slightly abusing notation, we write $\log(x)\equiv \log(e\vee x)$, and $\log \log (x)\equiv \log(e \vee \log (e\vee x))$.

\section{Empirical processes indexed by sets}\label{section:ep_sharp_bounds}

\subsection{Setup and assumptions}

Let $X_1,\ldots,X_n$ be i.i.d. random variables with distribution $P$ on a sample space $(\mathcal{X}, \mathcal{A})$, and $\mathscr{C}$ be a collection of measurable sets contained in $\mathcal{X}$. To avoid measurability digressions, we assume that $\mathscr{C}$ is countable throughout the article. For any $\sigma>0$, let $\mathscr{C}(\sigma)\equiv\{C \in \mathscr{C}: P(C)\leq \sigma^2\}$. 

Following the standard notation for covering and bracketing numbers (cf. \cite[pp. 83]{van1996weak}), for a normed linear space $(\mathcal{F},\pnorm{\cdot}{})$, let the covering number  $\mathcal{N}(\epsilon, \mathcal{F},\pnorm{\cdot}{})$ be the minimum number of balls $\{g: \pnorm{g-f}{}<\epsilon\}$ of radius $\epsilon$ under $\pnorm{\cdot}{}$ needed to cover $\mathcal{F}$. Let the bracketing number $\mathcal{N}_{[\,]}(\epsilon, \mathcal{F},\pnorm{\cdot}{})$ be the minimum number of $\epsilon$-brackets under $\pnorm{\cdot}{}$ needed to cover $\mathcal{F}$, where an $\epsilon$-bracket $[\ell,u]\equiv \{f: \ell\leq f\leq u\}$ with $\pnorm{u-\ell}{}<\epsilon$. Following the notation in \cite[pp. 270, (7.4)]{dudley1999uniform}, let $\mathcal{N}_I(\epsilon,\mathscr{C},P)$ be the $\epsilon$-bracketing number for $\mathscr{C}$ under $P$, i.e. $\mathcal{N}_I(\epsilon,\mathscr{C},P)\equiv \mathcal{N}_{[\,]}(\epsilon, \bm{1}_{\mathscr{C}}, L_1(P))$ with $\bm{1}_{\mathscr{C}}\equiv \{\bm{1}_C:C\in \mathscr{C}\}$.

\begin{assumption}
	Fix $\alpha>0$.
	\begin{enumerate}
		\item[(E1)] $\log \mathcal{N}_I(\epsilon,\mathscr{C},P)\leq L\epsilon^{-\alpha}$.
		\item[(E2)] $\log \mathcal{N}(\epsilon/4,\mathscr{C}(\sqrt{\epsilon}),P)\geq L^{-1}\epsilon^{-\alpha}.$
	\end{enumerate}
\end{assumption}
For examples satisfying the above entropy conditions, see e.g. \cite[Sections 8.3/8.4, or Theorem 8.3.2]{dudley1999uniform} on the class of upper/lower sets and convex bodies (cf. \cite[Theorem 8.4.1/Corollary 8.4.2]{dudley1999uniform}). $L$ will be a large enough absolute constant throughout the article, the dependence on which will not be explicitly stated in the theorems.

For $0<\alpha<1$, the bracketing condition in (E1) can also be replaced by a uniform entropy condition $
\sup_{Q} \log \mathcal{N}(\epsilon, \mathscr{C},Q)\leq L\epsilon^{-\alpha}$, 
where the supremum is taken over all finitely discrete probability measures $Q$. Such a uniform entropy condition is satisfied if $\mathscr{C}$ is a VC-class (cf. \cite[Section 3.6]{gine2015mathematical}).\footnote{\cite{baraud2016bounding} advocates the notion of weak VC-major class as a generalization of VC-major class.  The class of indicators over a class of sets $\mathscr{C}$ is weakly VC-major if and only if $\mathscr{C}$ is VC (cf. \cite[Definition 2.2]{baraud2016bounding}). } This case is essentially covered in \cite{gine2006concentration}.

\subsection{Upper and lower bounds}

We first state the general upper and lower bounds for empirical processes indexed by general function classes satisfying certain entropy conditions.

\begin{theorem}\label{thm:upper_bound_ep}
	
	\begin{enumerate}
		\item Fix $p\geq 1$. Let $\mathcal{F}\subset L_\infty(1)$, and $\sup_{f \in \mathcal{F}} \pnorm{f}{L_p}\leq \sigma$. Suppose there exists some $\alpha> 0$ such that for all $\epsilon>0$, $
		\log \mathcal{N}_{[\,]}(\epsilon,\mathcal{F},L_p(P))\leq L\epsilon^{-\alpha}$. Then 
		\begin{align}\label{ineq:upper_bound_ep_abstract}
		&\E \sup_{f \in \mathcal{F}} \abs{\G_n(f)}  \nonumber\\
		&\lesssim_{\alpha,p} \inf_{0\leq \gamma \leq \sigma/2 } \bigg\{(\sigma\bm{1}_{\alpha<p\wedge 2}+\gamma\bm{1}_{\alpha>p\wedge 2})^{ \{ (p\wedge 2)-\alpha\}/2}+ n^{-1/2} \sigma^{-\alpha} + \sqrt{n} \gamma \bigg\} \nonumber \\
		&\asymp 
		\begin{cases}
		\sigma^{\{ (p\wedge 2)-\alpha\}/2} + n^{-1/2}\sigma^{-\alpha}, & \alpha<p\wedge 2;\\
		n^{ \frac{\alpha-p\wedge 2}{2(\alpha+2-p\wedge 2)} } + \sigma^{-\frac{\alpha-p\wedge 2}{2}}+n^{-1/2} \sigma^{-\alpha}, & \alpha> p\wedge 2.
		\end{cases} 
		\end{align}
		\item 
		Suppose that the following entropy estimate holds for some $\alpha>0$:  \begin{align}\label{ineq:upper_bound_ep_1}
			\log \mathcal{N}_{[\,]}(\epsilon^2,\mathcal{F},L_1(P)\bm{1}_{\alpha>1}+L_2^2(P) \bm{1}_{0<\alpha<1})\leq L\epsilon^{-2\alpha}.
			\end{align}
			Then for $\sigma^2\gtrsim n^{-1/(\alpha+1)}$, $\alpha\neq 1$, we have: 
			\begin{align}\label{ineq:upper_bound_ep}
			\E \sup_{f \in \mathcal{F}(\sigma)}\abs{\G_n(f)}\lesssim_{\alpha} \max\big\{ \sigma^{1-\alpha}, n^{(\alpha-1)/2(\alpha+1)}\big\}.
			\end{align}
   \item 
	If in addition to (\ref{ineq:upper_bound_ep_1}), it holds that
	\begin{align}\label{ineq:thm_lower_bound_1}
	 \log \mathcal{N}(\epsilon/2,\mathcal{F}(\epsilon),L_2(P) )\geq L^{-1}\epsilon^{-2\alpha}.
	\end{align}
	Then for $\sigma^2\gtrsim n^{-1/(\alpha+1)}$, $\alpha\neq 1$, we have: 
	\begin{align}\label{ineq:lower_bound_ep}
	\E \sup_{f \in \mathcal{F}(\sigma)}\abs{\G_n(f)}\gtrsim_{\alpha} \max\big\{ \sigma^{1-\alpha}, n^{(\alpha-1)/2(\alpha+1)}\big\}.
	\end{align}
\end{enumerate}
Here $\mathcal{F}(\sigma)\equiv \{f \in \mathcal{F}: Pf^2\leq \sigma^2\}$.
\end{theorem}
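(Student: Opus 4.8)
The plan is to prove the three parts in turn, with the upper bound in the non-Donsker regime (part~(2) for $\alpha>1$) being the crux. Part~(1) is essentially bookkeeping around a standard bracketing maximal inequality. I would first reduce the $L_p$-bracketing hypothesis to the $L_{p\wedge 2}$ setting using $\mathcal{F}\subset L_\infty(1)$: when $p\geq 2$ an $L_p$-bracket is already an $L_2$-bracket, and when $1\leq p<2$ the H\"older interpolation $P\abs{g}^2\leq\pnorm{g}{\infty}^{2-p}P\abs{g}^p$ shows that a bracket of $L_p$-width $\epsilon$ has variance proxy $\lesssim\epsilon^{p\wedge 2}$ (and similarly $\sup_{f\in\mathcal{F}}Pf^2$ is controlled by $\sigma^{p\wedge 2}$). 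Chaining against these brackets down to a cut-off $\gamma$ and controlling the width-$\leq\gamma$ brackets by Bernstein's inequality through this variance proxy and the sup-norm bound, the three terms inside the $\inf$ over $\gamma$ are the (possibly truncated) entropy integral over $[\gamma,\sigma]$, equal to $\sigma^{\{(p\wedge2)-\alpha\}/2}$ if $\alpha<p\wedge 2$ and to $\gamma^{\{(p\wedge2)-\alpha\}/2}$ if $\alpha>p\wedge 2$; a second-order Bernstein term $n^{-1/2}\sigma^{-\alpha}$; and the coarsening bias $\sqrt n\,\gamma$. Explicitly optimizing over $\gamma$ then yields the two-regime formula with phase transition at $\alpha=p\wedge 2$.

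For part~(2), the case $0<\alpha<1$ is comparatively easy: there (\ref{ineq:upper_bound_ep_1}) is the $L_2^2(P)$ bound, i.e.\ $\log\mathcal{N}_{[\,]}(\delta,\mathcal{F},L_2(P))\lesssim\delta^{-2\alpha}$ with $2\alpha<2$, so the bracketing integral converges, and the standard $L_2$-bracketing inequality localized at $\mathcal{F}(\sigma)$ gives $\E\sup_{\mathcal{F}(\sigma)}\abs{\G_n}\lesssim_\alpha\sigma^{1-\alpha}+n^{-1/2}\sigma^{-2\alpha}$; the hypothesis $\sigma^2\gtrsim n^{-1/(\alpha+1)}$ makes the second term negligible and, in that range of $\sigma$, $\sigma^{1-\alpha}\gtrsim n^{(\alpha-1)/2(\alpha+1)}$ as well. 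The genuine difficulty is $\alpha>1$, where the $L_2$-bracketing integral diverges and the usual truncation yields only $n^{(\alpha-1)/2\alpha}$, because an $L_2$-bracket of width $\delta$ has $L_1$-size only $\leq\delta$, producing a truncation bias $\sqrt n\,\delta$ that is too large. Here I would chain instead against the $L_1$-brackets provided by (\ref{ineq:upper_bound_ep_1}): an $L_1$-bracket $[\ell,u]$ with $P(u-\ell)\leq t$ has $L_2$-width $\leq\sqrt{2t}$ (since $\pnorm{u-\ell}{\infty}\leq 2$), while --- crucially, using $u-\ell\geq 0$ --- the one-sided bound $\abs{\G_n(f-\pi f)}\leq\abs{\G_n(u-\ell)}+2\sqrt n\,P(u-\ell)\leq\abs{\G_n(u-\ell)}+2\sqrt n\,t$ controls the bracketing bias at the \emph{quadratically smaller} level $\sqrt n\,t=\sqrt n\cdot(L_2\text{-width})^2$. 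Running the chain over dyadic $L_1$-levels $t_k$, the increment at level $t_k$ is $\lesssim_\alpha t_k^{(1-\alpha)/2}+n^{-1/2}t_k^{-\alpha}$ --- the sub-Gaussian part $\sqrt{t_k}\cdot\sqrt{\log\mathcal{N}_{[\,]}(t_k,\mathcal{F},L_1(P))}$ plus a Bernstein remainder --- and the sum over scales is geometric, dominated for $\alpha>1$ by its finest level $t$. Anchoring the chain in $\mathcal{F}(\sigma)$ and estimating the residual by the same nonnegativity argument, the three terms $t^{(1-\alpha)/2}$, $n^{-1/2}t^{-\alpha}$ and $\sqrt n\,t$ all balance at $t\asymp n^{-1/(\alpha+1)}$ with common value $n^{(\alpha-1)/2(\alpha+1)}$, and one checks that the coarse-scale (anchor and first-link) contributions are $\lesssim n^{(\alpha-1)/2(\alpha+1)}$ precisely when $\sigma^2\gtrsim n^{-1/(\alpha+1)}$.

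For part~(3) I would establish the matching lower bound by Sudakov-type minorization for the empirical process. From (\ref{ineq:thm_lower_bound_1}) with $\epsilon=\sigma$ there are $M\geq\exp(L^{-1}\sigma^{-2\alpha})$ functions in $\mathcal{F}(\sigma)$ that are pairwise $\geq\sigma/2$-separated in $L_2(P)$ and each of $L_2(P)$-norm $\leq\sigma$; a Sudakov minorization $\E\max_{j\leq M}\abs{\G_n f_j}\gtrsim(\sigma/2)\sqrt{\log M}\gtrsim_\alpha\sigma^{1-\alpha}$ is valid once the empirical increments have entered their Gaussian domain, which --- via, say, a multivariate CLT or a Gaussian comparison on the finite separated set --- requires $\log M\lesssim n\,\sigma^2$, i.e.\ $\sigma^{-2\alpha}\lesssim n\,\sigma^2\Leftrightarrow\sigma^2\gtrsim n^{-1/(\alpha+1)}$: exactly the stated hypothesis. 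Applying the same estimate at the critical scale $\sigma_*=n^{-1/2(\alpha+1)}$ and using $\mathcal{F}(\sigma)\supseteq\mathcal{F}(\sigma_*)$ for $\sigma\geq\sigma_*$ also yields $\E\sup_{\mathcal{F}(\sigma)}\abs{\G_n}\gtrsim_\alpha\sigma_*^{1-\alpha}=n^{(\alpha-1)/2(\alpha+1)}$, and the two combined give $\max\{\sigma^{1-\alpha},n^{(\alpha-1)/2(\alpha+1)}\}$.

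I expect the main obstacle to be the $\alpha>1$ case of part~(2): turning the $L_1$-bracket chaining sketched above into a rigorous argument --- in particular handling the residual after truncation via the nonnegativity of the bracket widths $u-\ell$, ensuring that the Bernstein constants do not accumulate over the $\asymp\log n$ scales, and matching the coarse-scale terms against $n^{(\alpha-1)/2(\alpha+1)}$. It is exactly the quadratic improvement in the bias term made possible by $L_1$- rather than $L_2$-brackets that closes the gap between the classical bound $n^{(\alpha-1)/2\alpha}$ and the sharp bound $n^{(\alpha-1)/2(\alpha+1)}$.
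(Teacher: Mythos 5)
Parts (1) and (2) of your proposal follow the paper's own route: the paper proves (1) by exactly the $L_p$ chaining with bracketing you describe (an Ossiander-type decomposition with Bernstein's inequality at each level, the truncation bias controlled through the $L_p$-width $\gamma$ of the finest brackets), and it obtains (2) by specializing (1) to $p=1$ (for $\alpha>1$) and $p=2$ (for $\alpha<1$); your explicit $L_1$-bracket chaining for $\alpha>1$, with the quadratic gain $\sqrt{n}\,t$ versus $\sqrt{n}\sqrt{t}$ in the bias term, is precisely the mechanism behind the paper's part (1) in the case $p=1$.

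Part (3), however, contains a genuine gap, and it sits exactly at the hardest point of the theorem. You invoke a ``Sudakov minorization for the empirical process'' on the separated set, justified by ``a multivariate CLT or a Gaussian comparison on the finite separated set'' under $\log M\lesssim n\sigma^2$. No such off-the-shelf result exists: Sudakov minorization is a genuinely Gaussian statement (it rests on Slepian/Sudakov--Fernique comparison), and a quantitative CLT for the vector $(\G_n(f_j))_{j\le M}$ is useless here because $M=\exp(L^{-1}\sigma^{-2\alpha})$ can be as large as $\exp(n^{\alpha/(\alpha+1)})$ at the critical scale. The naive remedy --- Gaussianize the multipliers and apply conditional Sudakov --- goes the wrong way for a lower bound: one has $\E\sup\abs{\sum_i\epsilon_i f(X_i)}\lesssim\E\sup\abs{\sum_i g_i f(X_i)}$, and the reverse comparison costs a factor $\sqrt{\log n}$ in general, which would destroy the matching rate. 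The paper's actual argument is substantially more delicate: it first proves the Sudakov bound for the Gaussian-multiplier process $n^{-1/2}\sum_i g_i f(X_i)$ conditionally on the data (which also requires showing, via Bernstein plus a union bound, that the $L_2(P)$-separated set stays separated in the \emph{empirical} metric $L_2(\Prob_n)$ --- this is where $\log M\lesssim n\sigma^2$ really enters), and then removes the Gaussianization \emph{without} logarithmic loss via the multiplier inequality of \cite{han2017sharp}, which bounds the Gaussianized supremum by a weighted average of Rademacher-symmetrized suprema at all sample sizes $k\le n$. Crucially, this de-Gaussianization step consumes the matching \emph{upper} bounds from part (1) along the entire path $k=1,\dots,n$ (Propositions \ref{prop:lower_bound_ep_set_donsker} and \ref{prop:lower_bound_ep_set_nolog}), followed by a de-symmetrization to pass from the Rademacher process back to $\G_n$. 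None of this machinery is present in your sketch, so as written the lower bound is asserted rather than proved.
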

\begin{proof}
See Section \ref{section:proof_ep}.
\end{proof}

\begin{remark}[Comparison to classical bounds]\label{rmk:classical_comparison}
By the standard local maximal inequality for the empirical processes (cf. \cite[Lemma 2.14.3]{van1996weak}), we have for $\sigma>0$ not too small,
\begin{align}\label{ineq:classical_local_maximal_ineq}
\E \sup_{f \in \mathcal{F}(\sigma)} \abs{\G_n(f)}&\lesssim \inf_{ 0\leq \gamma\leq \sigma/2 }\left\{ \sqrt{n}\gamma+\int_{\gamma}^{\sigma}\sqrt{\log \mathcal{N}_{[\,]}(\epsilon,\mathcal{F},L_2(P))}\ \d{\epsilon}\right\}.
\end{align}
	Suppose $\mathcal{F}\subset L_\infty(1)$ satisfies the $L_2$ entropy condition:
	\begin{align}\label{ineq:L2_entropy}
	\epsilon^{-2\alpha}\lesssim \log \mathcal{N}(\epsilon,\mathcal{F}(2\epsilon), L_2(P))\leq \log \mathcal{N}_{[\,]}(\epsilon, \mathcal{F}, L_2(P))\lesssim \epsilon^{-2\alpha}.
	\end{align}
	In the Donsker regime $\alpha<1$, standard upper bound (\ref{ineq:classical_local_maximal_ineq}) and the lower bound (\ref{ineq:lower_bound_ep}) already match each other under (\ref{ineq:L2_entropy}). In the non-Donsker regime $\alpha>1$, these bounds lead to (\ref{ineq:ep_size_classic_nondonsker}) whose upper and lower bounds do not match, both of which can be attained for certain instances of $\mathcal{F}$ satisfying (\ref{ineq:L2_entropy}). This means improvements for the generic bound (\ref{ineq:ep_size_classic_nondonsker}) must require additional structural assumptions on $\mathcal{F}$. As the $L_1$ entropy condition (\ref{ineq:upper_bound_ep_1}) along with $\mathcal{F}\subset L_\infty(1)$ implies the rightmost part of (\ref{ineq:L2_entropy}), it follows that within the family of $\mathcal{F}\subset L_\infty(1)$'s satisfying the $L_2$ entropy condition (\ref{ineq:L2_entropy}) with $\alpha>1$, if in addition the stronger $L_1$ entropy condition (\ref{ineq:upper_bound_ep_1}) is satisfied, then the upper bound in (\ref{ineq:ep_size_classic_nondonsker}) can be improved from $n^{(\alpha-1)/2\alpha}$ to $n^{(\alpha-1)/2(\alpha+1)}$ that matches the lower bound. 
\end{remark}

An important family of $\mathcal{F}$ satisfying the $L_1$ entropy condition (\ref{ineq:upper_bound_ep_1}) is the class of indicators over measurable sets. We formalize the results below.

\begin{theorem}\label{thm:upper_bound_ep_set}
	Suppose (E1) holds and $\sigma^2\gtrsim n^{-1/(\alpha+1)}$, $\alpha\neq 1$. Then
	\begin{align*}
	\E \sup_{C \in \mathscr{C}(\sigma)}\abs{\G_n(C)} \lesssim_{\alpha} \max\big\{\sigma^{1-\alpha}, n^{(\alpha-1)/2(\alpha+1)}\}.
	\end{align*}
	If furthermore (E2) holds, then
	\begin{align*}
	\E \sup_{C \in \mathscr{C}(\sigma)}\abs{\G_n(C)} \gtrsim_{\alpha} \max\big\{\sigma^{1-\alpha}, n^{(\alpha-1)/2(\alpha+1)}\}.
	\end{align*}
\end{theorem}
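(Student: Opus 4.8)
The plan is to deduce Theorem~\ref{thm:upper_bound_ep_set} from parts (2) and (3) of Theorem~\ref{thm:upper_bound_ep}, applied to the indicator class $\mathcal{F}=\bm{1}_{\mathscr{C}}\equiv\{\bm{1}_C:C\in\mathscr{C}\}\subset L_\infty(1)$. The elementary fact that makes this work is that a difference of two $\{0,1\}$-valued functions is $\{0,1\}$-valued in absolute value, so $\pnorm{\bm{1}_C-\bm{1}_{C'}}{L_1(P)}=\pnorm{\bm{1}_C-\bm{1}_{C'}}{L_2(P)}^2=P(C\triangle C')$; in particular $P\bm{1}_C^2=P(C)$, whence $\mathcal{F}(\sigma)=\{f\in\bm{1}_{\mathscr{C}}:Pf^2\le\sigma^2\}=\bm{1}_{\mathscr{C}(\sigma)}$ with no constant lost, and $\E\sup_{f\in\mathcal{F}(\sigma)}\abs{\G_n(f)}=\E\sup_{C\in\mathscr{C}(\sigma)}\abs{\G_n(C)}$ since $\G_n(\bm{1}_C)\equiv\G_n(C)$. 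Thus it suffices to verify that (E1) implies the hypothesis~(\ref{ineq:upper_bound_ep_1}), and that (E1) and (E2) together imply~(\ref{ineq:thm_lower_bound_1}), for $\mathcal{F}=\bm{1}_{\mathscr{C}}$.

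For the upper bound, recall that $\mathcal{N}_I(\epsilon,\mathscr{C},P)=\mathcal{N}_{[\,]}(\epsilon,\bm{1}_{\mathscr{C}},L_1(P))$ by definition. Moreover the bracketing number of an indicator class is always realized by set-brackets $[\bm{1}_A,\bm{1}_B]$ with $A\subseteq B$: replacing a bracket $[\ell,u]$ that contains some $\bm{1}_C$ by $[\bm{1}_{\{\ell>0\}},\bm{1}_{\{u\ge1\}}]$ retains every indicator it contained and does not increase the $L_1(P)$-width, and for a set-bracket the $L_1(P)$- and $L_2^2(P)$-widths both equal $P(B\setminus A)$. Hence $\mathcal{N}_{[\,]}(\epsilon^2,\bm{1}_{\mathscr{C}},L_1(P))=\mathcal{N}_{[\,]}(\epsilon^2,\bm{1}_{\mathscr{C}},L_2^2(P))=\mathcal{N}_I(\epsilon^2,\mathscr{C},P)\le L\epsilon^{-2\alpha}$ by (E1) read at radius $\epsilon^2$, which is exactly~(\ref{ineq:upper_bound_ep_1}) in both regimes $\alpha>1$ and $0<\alpha<1$ (in the Donsker regime this is in any case essentially contained in~\cite{gine2006concentration}). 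Theorem~\ref{thm:upper_bound_ep}(2) then gives $\E\sup_{C\in\mathscr{C}(\sigma)}\abs{\G_n(C)}\lesssim_\alpha\max\{\sigma^{1-\alpha},n^{(\alpha-1)/2(\alpha+1)}\}$ for $\sigma^2\gtrsim n^{-1/(\alpha+1)}$, $\alpha\neq1$.

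For the matching lower bound, the key point is that the $L_2(P)$-geometry of $\bm{1}_{\mathscr{C}}$ is just the square root of the $L_1$-type pseudometric $\rho_P(C,C')=P(C\triangle C')$: an $L_2(P)$-ball of radius $\epsilon/2$ around $\bm{1}_C$ is exactly a $\rho_P$-ball of radius $\epsilon^2/4$ around $C$, and $\mathcal{F}(\epsilon)=\bm{1}_{\{C:P(C)\le\epsilon^2\}}$. Therefore, for $\mathcal{F}=\bm{1}_{\mathscr{C}}$, the hypothesis~(\ref{ineq:thm_lower_bound_1}) reads $\log\mathcal{N}(\epsilon^2/4,\{C:P(C)\le\epsilon^2\},\rho_P)\ge L^{-1}\epsilon^{-2\alpha}$, which is precisely (E2) after the substitution $\epsilon\mapsto\epsilon^2$: the factor $\tfrac14=(\tfrac12)^2$, the class $\mathscr{C}(\sqrt{\epsilon^2})=\{C:P(C)\le\epsilon^2\}$, and the exponent $(\epsilon^2)^{-\alpha}=\epsilon^{-2\alpha}$ all line up. (Any residual discrepancy between the covering conventions for function classes and for classes of sets costs only a fixed factor in the radius, absorbed into the absolute constant $L$.) Theorem~\ref{thm:upper_bound_ep}(3) then yields the reverse inequality $\E\sup_{C\in\mathscr{C}(\sigma)}\abs{\G_n(C)}\gtrsim_\alpha\max\{\sigma^{1-\alpha},n^{(\alpha-1)/2(\alpha+1)}\}$ in the same range.

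I do not anticipate a genuine analytic difficulty here: the whole proof is a translation of the ``set language'' of (E1)--(E2) into the ``function language'' of~(\ref{ineq:upper_bound_ep_1}) and~(\ref{ineq:thm_lower_bound_1}). The one step demanding care is carrying the $L_1\leftrightarrow L_2^2$ squaring consistently and simultaneously through the covering radius, the bracket width, and the variance budget $P(C)\le\sigma^2$ that defines $\mathscr{C}(\sigma)$, and checking that the deliberately chosen constants in (E1)--(E2)---the $\epsilon^2$ in (E1), and the $\sqrt{\epsilon}$ and the factor $1/4$ in (E2)---are exactly calibrated to hit~(\ref{ineq:upper_bound_ep_1}) and~(\ref{ineq:thm_lower_bound_1}). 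Once this bookkeeping is settled, both halves follow immediately, including the admissibility condition $\sigma^2\gtrsim n^{-1/(\alpha+1)}$ and the exclusion $\alpha\neq1$, which transfer verbatim.
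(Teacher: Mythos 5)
Your proposal is correct and follows essentially the same route as the paper: deduce the theorem from Theorem \ref{thm:upper_bound_ep}(2)--(3) applied to $\mathcal{F}=\{\bm{1}_C:C\in\mathscr{C}\}$, using the identity $P\bm{1}_C=P\bm{1}_C^2$ (equivalently, that set-brackets have equal $L_1(P)$ and squared $L_2(P)$ widths) to get (\ref{ineq:upper_bound_ep_1}) from (E1), and the radius-squaring correspondence between $L_2(P)$-balls of indicators and $P(C\triangle C')$-balls of sets to get (\ref{ineq:thm_lower_bound_1}) from (E2). Your bookkeeping of the constants matches the paper's, so no further changes are needed.
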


\begin{proof}[Proof of Theorem \ref{thm:upper_bound_ep_set}]
	(\ref{ineq:upper_bound_ep_1}) is verified using the fact that for any measurable set $C$, with $f\equiv \bm{1}_C$ we have $Pf = Pf^2$. (\ref{ineq:thm_lower_bound_1}) can be verified by noting that $\mathcal{N}_I (\epsilon/4, \mathscr{C}(\sqrt{\epsilon'}), P) = \mathcal{N}_{[\,]}(\sqrt{\epsilon}/2, \mathcal{F}(\sqrt{\epsilon'}), L_2(P))$ holds with $\mathcal{F}\equiv \{\bm{1}_C: C \in \mathscr{C}\}$ and any $\epsilon,\epsilon'>0$ (and similarly for the covering number).
\end{proof}

\begin{remark}\label{rmk:upper_bound_entropy}
Some remarks on the upper bounds in Theorems \ref{thm:upper_bound_ep} and \ref{thm:upper_bound_ep_set}:
\begin{enumerate}

    \item Roughly speaking, the improved estimates (\ref{ineq:upper_bound_ep_abstract}) compared to the classical bound (\ref{ineq:classical_local_maximal_ineq}) come from careful $L_p$ chaining with bracketing that provides tighter controls at the finest resolution of the chaining step; see Section \ref{section:proof_upper_bounds} for some heuristics and details.
	\item It is also possible to consider the boundary case $\alpha=1$ in Theorem \ref{thm:upper_bound_ep_set}. Then the upper bound deviates from the lower bound by a logarithmic factor. In particular, suppose (E1)-(E2) hold and $\sigma^2\gtrsim n^{-1/2}$. Then $
	1\lesssim \E \sup_{C \in \mathscr{C}(\sigma)} \abs{\G_n(C)} \lesssim \log n$.
\end{enumerate}
\end{remark}

\begin{remark}\label{rmk:lower_bound_entropy}
Some remarks on the lower bounds in Theorems \ref{thm:upper_bound_ep} and \ref{thm:upper_bound_ep_set}:
	\begin{enumerate}
		\item 
		The proof for the lower bound (\ref{ineq:lower_bound_ep}) is based on Gaussian randomization followed by an application of the multiplier inequality derived in the author's previous work \cite{han2017sharp} that removes the effect of Gaussianization. This only requires some sharp upper bounds for the unconditional processes, as opposed to the approach of \cite{gine2006concentration} using Rademacher minorization, which requires sharp upper bounds for \emph{conditional} processes. 
	   \item Condition (\ref{ineq:thm_lower_bound_1}) is also assumed  in \cite{gine2006concentration} under the name `$\alpha$-fullness' (cf. Definition 3.3 therein). This condition is best verified on a case-by-case basis. For instance the $\alpha$-H\"older class on $[0,1]$ is $\alpha$-full, cf. the proof of Lemma 6 in \cite{han2017sharp}. 
	\end{enumerate}
\end{remark}

\section{Rate-optimal global ERMs}\label{section:application}

In this section, we apply the new bounds derived in the previous section to several models including (i) image and edge estimation, (ii) binary classification, (iii) multiple isotonic regression, and (iv) $s$-concave density estimation, all in general dimensions. Global ERMs in these models are non-Donsker problems in general dimensions, but we will show that in each of these models, the underlying empirical process problem (\ref{ineq:risk_characterization}) can be essentially characterized by an empirical process indexed by certain class of measurable sets. The bounds in Theorem \ref{thm:upper_bound_ep_set} can then be used to prove that these global ERMs converge at an optimal rate (up to logarithmic factors), rather than a strictly sub-optimal rate as predicted using the entropy integral (= (\ref{ineq:upper_bound_risk})) in \cite{birge1993rates}. Interestingly, Theorem \ref{thm:upper_bound_ep_set} is typically applied without the localization. This is viable as the size of the expected supremum of empirical process is already diverging in the non-Donsker regime, so localization is usually uninformative.

\subsection{Image estimation}

Let $X_1,\ldots,X_n$ be i.i.d. samples with law $P$ on a sample space $(\mathcal{X}, \mathcal{A})$. In this subsection we consider the regression model:
\begin{align}\label{eqn:reg_model}
Y_i = \bm{1}_{C_0}(X_i)+\xi_i,\quad i=1,\ldots,n.
\end{align}
This model has been considered by \cite{korostelev1992asymtotically,korestelev1993minimax} and more recently by \cite{brunel2013adaptive} under the name `image estimation', cf. \cite[Section 3.1]{korestelev1993minimax}, where $C_0$ is considered as the `image', and $\mathcal{X}\setminus C_0$ is considered as `background'. We assume for simplicity that the $\xi_i$'s are i.i.d. $\mathcal{N}(0,1)$ and are independent of $X_i$'s. Let $\mathscr{C}$ be a collection of measurable sets in $\mathcal{X}$, and we will fit the regression model by $\{\bm{1}_C: C \in \mathscr{C}\}$. Our interest will be the behavior of the least squares estimator $\widehat{C}_n$ defined by
\begin{align}\label{def:lse_reg_add}
\widehat{C}_n \in \argmin_{C \in \mathscr{C}} \sum_{i=1}^n\left(Y_i-\bm{1}_C(X_i)\right)^2.
\end{align}
We assume that $\widehat{C}_n$ is well-defined without loss of generality. We will measure the quality of $\widehat{C}_n$ via the expected symmetric difference of $\widehat{C}_n$ and $C_0$ under $P$ defined by
\begin{align}\label{def:symmetric_diff}
P\abs{\widehat{C}_n\Delta C_0} = P\big(\bm{1}_{\widehat{C}_n}-\bm{1}_{C_0}\big)^2 = \int \big(\bm{1}_{\widehat{C}_n}-\bm{1}_{C_0}\big)^2\ \d{P}.
\end{align}
By a relatively standard reduction (cf. Lemma \ref{lem:lse_additive_error}), the risk of $\widehat{C}_n$ in symmetric difference can be related to the expected supremum
\begin{align*}
	\E \sup_{C \in \mathscr{C}: P\abs{C\Delta C_0}\leq \delta^2} \biggabs{\frac{1}{\sqrt{n}}\sum_{i=1}^n \epsilon_i(\bm{1}_{C}-\bm{1}_{C_0})(X_i) },
\end{align*}
and therefore we may apply Theorem \ref{thm:upper_bound_ep_set} (or the more general Theorem \ref{thm:upper_bound_ep}). We formalize the result below.

\begin{theorem}\label{thm:rate_optimal_lse_reg}
	Suppose that for some $\alpha\neq 1$, $
	\log \mathcal{N}_I(\epsilon,\mathscr{C},P)\leq L\epsilon^{-\alpha}$. Then
	\begin{align*}
	\sup_{C_0 \in \mathscr{C}} \E_{C_0} P\abs{\widehat{C}_n\Delta C_0} \lesssim n^{-1/(\alpha+1)}.
	\end{align*}
\end{theorem}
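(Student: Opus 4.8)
The plan is to feed the set-indexed empirical process bound of Theorem~\ref{thm:upper_bound_ep_set} into the standard least-squares-with-Gaussian-noise reduction. Fix $C_0\in\mathscr{C}$ and set $\mathcal{F}_{C_0}\equiv\{\bm{1}_C-\bm{1}_{C_0}:C\in\mathscr{C}\}$; since every $f\in\mathcal{F}_{C_0}$ takes values in $\{-1,0,1\}$ we have $\abs{f}=f^2$ pointwise, so $P\abs{f}=Pf^2=P\abs{C\Delta C_0}$ and, for each $\delta>0$, $\mathcal{F}_{C_0}(\delta)\equiv\{f\in\mathcal{F}_{C_0}:Pf^2\leq\delta^2\}=\{\bm{1}_C-\bm{1}_{C_0}:P\abs{C\Delta C_0}\leq\delta^2\}$. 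First I would invoke Lemma~\ref{lem:lse_additive_error}: starting from the basic inequality $\Prob_n(\bm{1}_{\widehat{C}_n}-\bm{1}_{C_0})^2\leq\frac{2}{n}\sum_i\xi_i(\bm{1}_{\widehat{C}_n}-\bm{1}_{C_0})(X_i)$, replacing the empirical $L_2$ norm by the population one, and applying a multiplier inequality to pass from the Gaussian errors to Rademacher variables, it yields that $\delta_n^2\equiv\sup_{C_0\in\mathscr{C}}\E_{C_0}P\abs{\widehat{C}_n\Delta C_0}$ obeys a fixed-point inequality of the form
\begin{align*}
\delta_n^2\lesssim\frac{1}{\sqrt{n}}\,\sup_{C_0\in\mathscr{C}}\E\sup_{C\in\mathscr{C}:\,P\abs{C\Delta C_0}\leq\delta_n^2}\biggabs{\frac{1}{\sqrt{n}}\sum_{i=1}^n\epsilon_i(\bm{1}_C-\bm{1}_{C_0})(X_i)}+\text{(lower-order terms)}.
\end{align*}
Adding and subtracting means and using $\sup_{f\in\mathcal{F}_{C_0}}\abs{Pf}=\sup_C\abs{P(C)-P(C_0)}\leq\delta_n^2$ on the localized set, symmetrization bounds the Rademacher supremum above, up to an absolute constant and an $O(\delta_n^2)$ term, by $\E\sup_{f\in\mathcal{F}_{C_0}(\delta_n)}\abs{\G_n(f)}$.

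Next I would verify that $\mathcal{F}_{C_0}$ meets the hypothesis of Theorem~\ref{thm:upper_bound_ep}(2): translation by the fixed function $\bm{1}_{C_0}$ is a bracket-preserving bijection, so $\mathcal{N}_{[\,]}(\epsilon^2,\mathcal{F}_{C_0},L_1(P))=\mathcal{N}_I(\epsilon^2,\mathscr{C},P)\leq L\epsilon^{-2\alpha}$, and since the $L_1(P)$ and $L_2^2(P)$ radii agree on $\mathcal{F}_{C_0}$ this is exactly (\ref{ineq:upper_bound_ep_1}). (Equivalently, split $\bm{1}_C-\bm{1}_{C_0}=\bm{1}_{C\setminus C_0}-\bm{1}_{C_0\setminus C}$ and apply Theorem~\ref{thm:upper_bound_ep_set} to the two classes $\{C\setminus C_0:C\in\mathscr{C}\}$ and $\{C_0\setminus C:C\in\mathscr{C}\}$, each of which inherits (E1) from $\mathscr{C}$ by intersecting or complementing brackets with the fixed set $C_0$.) Hence, provided $\delta_n^2\gtrsim n^{-1/(\alpha+1)}$ and $\alpha\neq1$,
\begin{align*}
\E\sup_{f\in\mathcal{F}_{C_0}(\delta_n)}\abs{\G_n(f)}\lesssim_\alpha\max\big\{\delta_n^{1-\alpha},\,n^{(\alpha-1)/2(\alpha+1)}\big\}.
\end{align*}

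It then remains to solve the fixed point. If $\delta_n^2\lesssim n^{-1/(\alpha+1)}$ there is nothing to prove; otherwise the two displays give $\delta_n^2\lesssim_\alpha n^{-1/2}\max\{\delta_n^{1-\alpha},n^{(\alpha-1)/2(\alpha+1)}\}$. If the maximum is the second term, then $\delta_n^2\lesssim_\alpha n^{-1/2+(\alpha-1)/2(\alpha+1)}=n^{-1/(\alpha+1)}$; if it is the first, then $\delta_n^{1+\alpha}\lesssim_\alpha n^{-1/2}$, i.e. again $\delta_n^2\lesssim_\alpha n^{-1/(\alpha+1)}$. This is uniform over $C_0\in\mathscr{C}$ and covers both the Donsker range $0<\alpha<1$ (where $\delta^{1-\alpha}$ is the governing modulus) and the non-Donsker range $\alpha>1$ (where $n^{(\alpha-1)/2(\alpha+1)}$ dominates and, as remarked at the start of Section~\ref{section:application}, localization is uninformative), yielding the claim.

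The empirical-process input is already supplied by Theorem~\ref{thm:upper_bound_ep_set}, so I expect the main obstacle to lie entirely in the reduction of Step~1: making it rigorous with a \emph{random} localization radius. This requires the usual peeling/slicing device over dyadic shells $\{C:2^j\delta^2<P\abs{C\Delta C_0}\leq 2^{j+1}\delta^2\}$ together with monotonicity of $\delta\mapsto\max\{\delta^{1-\alpha},n^{(\alpha-1)/2(\alpha+1)}\}/\delta^2$, control of the deviation between empirical and population $L_2$ norms on the relevant balls, and a sharp multiplier inequality (e.g. that of \cite{han2017sharp}) so that passing from Gaussian to Rademacher variables introduces no spurious logarithmic factor; one must also keep in mind that the entropy bound is applied to the shifted class $\mathcal{F}_{C_0}$ uniformly over $C_0\in\mathscr{C}$, which is automatic from the translation invariance used in Step~2.
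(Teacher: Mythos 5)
Your proposal is correct and follows essentially the same route as the paper: reduce via Lemma~\ref{lem:lse_additive_error} to the two multiplier processes, verify that the translated class $\{\bm{1}_C-\bm{1}_{C_0}\}$ inherits the $L_1$-bracketing condition (\ref{ineq:upper_bound_ep_1}) from (E1) by translation invariance and the identity $P\abs{f}=Pf^2$, apply Theorem~\ref{thm:upper_bound_ep_set} to get $\phi_n(\delta)\lesssim\max\{\delta^{1-\alpha},n^{(\alpha-1)/2(\alpha+1)}\}$, handle the Gaussian-noise process with the multiplier inequality of \cite{han2017sharp}, and solve $\phi_n(\delta_n)\leq\sqrt{n}\delta_n^2$ at $\delta_n^2\asymp n^{-1/(\alpha+1)}$. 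The "random localization radius" issue you flag at the end is exactly what the peeling argument inside the proof of Lemma~\ref{lem:lse_additive_error} already absorbs, so once that lemma is invoked nothing further is needed.
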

\begin{proof}
See Section \ref{section:proof_application}.
\end{proof}

By \cite{yang1999information}, the rate $n^{-1/(\alpha+1)}$ cannot be improved in a minimax sense if furthermore a lower bound on the metric entropy on the same order as that of the upper bound is available.

As a straightforward corollary of the above Theorem \ref{thm:rate_optimal_lse_reg}, let $\mathscr{C}_d$ be the collection of all convex bodies contained in the unit ball in $\R^d$ and $P$ the uniform distribution on the unit ball.

\begin{corollary}
	Fix $d\geq 4$. Then
	\begin{align*}
	\sup_{C_0 \in \mathscr{C}_d} \E_{C_0} P\abs{\widehat{C}_n\Delta C_0} \lesssim n^{-2/(d+1)}.
	\end{align*}
\end{corollary}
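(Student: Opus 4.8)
The plan is to deduce this directly from Theorem \ref{thm:rate_optimal_lse_reg}, applied with $\mathscr{C}=\mathscr{C}_d$ and $P$ the uniform distribution on the unit ball of $\R^d$; the only hypothesis to verify is the bracketing entropy bound $\log\mathcal{N}_I(\epsilon,\mathscr{C}_d,P)\leq L\epsilon^{-\alpha}$ for a suitable $\alpha\neq 1$. First I would invoke the classical metric entropy estimate for convex bodies (Bronshtein's theorem; see \cite[Sections 8.3--8.4, Theorem 8.4.1, Corollary 8.4.2]{dudley1999uniform}): the family of convex bodies contained in a fixed Euclidean ball of $\R^d$ can be covered in the Hausdorff metric by $\exp\bigl(C_d\,\delta^{-(d-1)/2}\bigr)$ balls of radius $\delta$. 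Passing from this to an $L_1(P)$ bracketing bound is routine: around each Hausdorff $\delta$-net element $K$ one takes the bracket $[\bm{1}_{K^{-\delta}},\bm{1}_{K^{+\delta}}]$ built from the inner and outer parallel bodies of $K$; since $P$ has bounded density and convex bodies inside a ball have uniformly bounded surface area, $P\bigl(K^{+\delta}\setminus K^{-\delta}\bigr)\lesssim_d \delta$, so choosing $\delta\asymp\epsilon$ yields $\log\mathcal{N}_I(\epsilon,\mathscr{C}_d,P)\lesssim_d \epsilon^{-(d-1)/2}$ (indeed $\asymp_d\epsilon^{-(d-1)/2}$, with a matching lower bound also in \cite[Corollary 8.4.2]{dudley1999uniform}).

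Hence the hypothesis of Theorem \ref{thm:rate_optimal_lse_reg} holds with $\alpha=(d-1)/2$. For $d\geq 4$ we have $\alpha=(d-1)/2\geq 3/2>1$, so in particular $\alpha\neq 1$, and Theorem \ref{thm:rate_optimal_lse_reg} gives
\[
\sup_{C_0\in\mathscr{C}_d}\E_{C_0}P\abs{\widehat{C}_n\Delta C_0}\lesssim n^{-1/(\alpha+1)}=n^{-1/((d-1)/2+1)}=n^{-2/(d+1)},
\]
which is precisely the assertion.

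I do not expect a genuine obstacle here: the content is entirely in the (classical) entropy estimate for convex bodies, which the paper has already flagged, and everything else is a substitution into Theorem \ref{thm:rate_optimal_lse_reg}. The one conceptual point worth recording is the role of the restriction $d\geq 4$: it is exactly what places $\alpha=(d-1)/2$ strictly in the non-Donsker regime $\alpha>1$, which is where the statement is interesting; for $d=3$ one lands on the excluded boundary $\alpha=1$, where the method only yields the rate up to a logarithmic factor as in Remark \ref{rmk:upper_bound_entropy}(2). Finally, since the matching entropy lower bound for $\mathscr{C}_d$ is available, the remark following Theorem \ref{thm:rate_optimal_lse_reg} together with \cite{yang1999information} shows the exponent $n^{-2/(d+1)}$ is minimax optimal, so the corollary is sharp.
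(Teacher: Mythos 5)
Your proposal is correct and follows essentially the same route as the paper: verify the bracketing entropy hypothesis $\log\mathcal{N}_I(\epsilon,\mathscr{C}_d,P)\lesssim\epsilon^{-(d-1)/2}$ via the classical convex-body entropy bounds in \cite[Theorem 8.25, Corollary 8.26]{dudley1999uniform} and substitute $\alpha=(d-1)/2$ into Theorem \ref{thm:rate_optimal_lse_reg}. Your added remarks on the Hausdorff-to-bracket passage and on why $d\geq 4$ avoids the excluded boundary $\alpha=1$ are accurate elaborations of what the paper leaves implicit.
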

\begin{proof}
	The claim essentially follows from \cite[Theorem 8.25, Corollary 8.26 ]{dudley1999uniform}, asserting that we can take $\alpha=(d-1)/2$ in Theorem \ref{thm:rate_optimal_lse_reg}.
\end{proof}

The corollary shows that we can use a global least squares estimator rather than a sieved least squares estimator (cf. \cite{brunel2013adaptive}) to achieve the optimal rate of convergence.

\begin{remark}
It is possible to impose certain tail conditions on the density of $P$ to extend the above corollary to a maximum risk bound over all convex sets in $\R^d$. In particular, the above result holds for any $P$ with compact support in $\R^d$ with a bounded Lebesgue density. A proof in this vein is carried out in the context of $s$-concave density estimation in $\R^d$ to be detailed ahead.
\end{remark}

\subsection{Edge estimation}
In this subsection we consider the regression model studied in \cite{korestelev1993minimax,mammen1995asymptotical}:
\begin{align}\label{eqn:reg_multi_model_original}
Y_i = f_{C_0}(X_i)\eta_i
\end{align}
where $f_{C_0}(x)=2\bm{1}_{C_0}(x)-1$ and $\eta_i$'s are i.i.d. random variables such that $\Prob(\eta_i=1)=1/2+a$ and $\Prob(\eta_i=-1)=1/2-a$ for some known constant $a \in (0,1/2)$. Such a model is motivated by estimation of sets in multi-dimensional `black and white' pictures, where $Y_i=1$ is interpreted as observing black, and $Y_i=-1$ is white. We refer the reader to \cite{mammen1995asymptotical} for more motivation for this model. The model (\ref{eqn:reg_multi_model_original}) can be rewritten as
\begin{align}\label{eqn:reg_multi_model}
Y_i = 2af_{C_0}(X_i)+\xi_i
\end{align}
where $\xi_i = f_{C_0}(X_i)(\eta_i-2a)$'s are bounded errors. An important property for these errors is that $\E[\xi_i|X_i]=0$ for all $i=1,\ldots,n$. Note here $\xi_i$ is \emph{not} independent of $X_i$ and hence a different analysis is needed. Now consider the least squares estimator
\begin{align}\label{def:lse_reg_multi}
\widehat{C}_n\equiv \argmin_{C \in \mathscr{C}}\sum_{i=1}^n(Y_i-2af_C(X_i))^2.
\end{align}
A careful analysis to be detailed in Lemma \ref{lem:lse_dependent_bounded_error} ahead shows that the risk of $\widehat{C}_n$ in symmetric difference (\ref{def:symmetric_diff}) can still be related to the expected supremum of empirical process, so Theorem \ref{thm:upper_bound_ep_set} is applicable in this setting as well. Formally, we have
\begin{theorem}\label{thm:tsybakov_example}
	Suppose that for some $\alpha\neq 1$, $
	\log \mathcal{N}_I(\epsilon,\mathscr{C},P)\leq L\epsilon^{-\alpha}$. 
	Then
	\begin{align*}
	\sup_{C_0 \in \mathscr{C}} \E_{C_0} P\abs{\widehat{C}_n\Delta C_0} \lesssim n^{-1/(\alpha+1)}.
	\end{align*}
\end{theorem}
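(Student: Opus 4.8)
The plan is to follow the route of Theorem~\ref{thm:rate_optimal_lse_reg}, reducing the symmetric-difference risk of the least squares estimator $\widehat{C}_n$ in \eqref{def:lse_reg_multi} to the expected size of an empirical process indexed by the set class $\{C\Delta C_0:C\in\mathscr{C}\}$, and then invoking Theorem~\ref{thm:upper_bound_ep_set} (or its function-class form Theorem~\ref{thm:upper_bound_ep}). Write $r_n^2:=n^{-1/(\alpha+1)}$; the target is $\sup_{C_0\in\mathscr{C}}\E_{C_0}P\abs{\widehat{C}_n\Delta C_0}\lesssim r_n^2$. The single new feature compared with image estimation is that in the reformulation \eqref{eqn:reg_multi_model} the errors $\xi_i=f_{C_0}(X_i)(\eta_i-2a)$ are \emph{not} independent of $X_i$; they are, however, conditionally centered, $\E[\xi_i\mid X_i]=0$, bounded, $\abs{\xi_i}\le 1+2a\le 2$, and satisfy $\E[\xi_i^2\mid X_i]=1-4a^2$, which turns out to be all one needs. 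The first step is the basic inequality: since $\widehat{C}_n$ minimizes $\sum_i(Y_i-2af_C(X_i))^2$ over $C\in\mathscr{C}\ni C_0$ and $Y_i=2af_{C_0}(X_i)+\xi_i$, expanding the square and using $f_C-f_{C_0}=2(\bm{1}_C-\bm{1}_{C_0})$, $(f_C-f_{C_0})^2=4\bm{1}_{C\Delta C_0}$ gives $2an\,\Prob_n(\widehat{C}_n\Delta C_0)\le\sum_i\xi_i(\bm{1}_{\widehat{C}_n}-\bm{1}_{C_0})(X_i)$; replacing $\Prob_n$ by $P$ on the left at the cost of $n(\Prob_n-P)(\bm{1}_{\widehat{C}_n\Delta C_0})=\sqrt{n}\,\G_n(\bm{1}_{\widehat{C}_n\Delta C_0})$ yields
\[
2an\,P\abs{\widehat{C}_n\Delta C_0}\ \le\ \sqrt{n}\,\abs{\G_n^{\xi}(\bm{1}_{\widehat{C}_n}-\bm{1}_{C_0})}+2a\sqrt{n}\,\abs{\G_n(\bm{1}_{\widehat{C}_n\Delta C_0})},
\]
where $\G_n^{\xi}(g):=n^{-1/2}\sum_i\xi_ig(X_i)$ is the multiplier process, which by the moment facts above indexes a mean-zero process with $\E\,\G_n^{\xi}(g)^2=(1-4a^2)Pg^2$. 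I expect this to be the content of the reduction lemma (Lemma~\ref{lem:lse_dependent_bounded_error}).

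Next I would bound the two localized suprema over $\{C:P\abs{C\Delta C_0}\le\sigma^2\}$. For the genuine empirical-process term, Theorem~\ref{thm:upper_bound_ep_set} applies to $\mathscr{C}':=\{C\Delta C_0:C\in\mathscr{C}\}$, since $\pnorm{\bm{1}_{C\Delta C_0}-\bm{1}_{C'\Delta C_0}}{L_1(P)}=P\abs{C\Delta C'}$ shows that (E1)---which is precisely the entropy hypothesis of the theorem---transfers verbatim from $\mathscr{C}$ to $\mathscr{C}'$; hence $\E\sup_{P\abs{C\Delta C_0}\le\sigma^2}\abs{\G_n(\bm{1}_{C\Delta C_0})}\lesssim_\alpha\max\{\sigma^{1-\alpha},n^{(\alpha-1)/2(\alpha+1)}\}$ for $\sigma^2\gtrsim r_n^2$. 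For the multiplier term, I would treat $(X_i,\xi_i)$ as i.i.d.\ data; since the summands $\xi_i(\bm{1}_C-\bm{1}_{C_0})(X_i)$ are centered, the symmetrization inequality, then the contraction principle (valid as $\abs{\xi_i/2}\le 1$, stripping the bounded multipliers), then desymmetrization give $\E\sup_{P\abs{C\Delta C_0}\le\sigma^2}\abs{\G_n^{\xi}(\bm{1}_C-\bm{1}_{C_0})}\lesssim\E\sup_{f\in\mathcal{F}(\sigma)}\abs{\G_n(f)}+\sigma^2$ with $\mathcal{F}:=\{\bm{1}_C-\bm{1}_{C_0}:C\in\mathscr{C}\}\subset L_\infty(1)$; and $\mathcal{F}$ satisfies the $L_1$ entropy condition \eqref{ineq:upper_bound_ep_1} (again by translation, using that indicator brackets obey $\pnorm{u-\ell}{L_2(P)}^2\le\pnorm{u-\ell}{L_1(P)}$), so Theorem~\ref{thm:upper_bound_ep}(2) bounds the right side by $\lesssim_\alpha\max\{\sigma^{1-\alpha},n^{(\alpha-1)/2(\alpha+1)}\}$ for $\sigma^2\gtrsim r_n^2$. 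Thus both localized suprema are $\lesssim_{\alpha,a}\phi_n(\sigma):=\max\{\sigma^{1-\alpha},n^{(\alpha-1)/2(\alpha+1)}\}$, and a short computation shows $\phi_n(\sigma)\asymp\sqrt{n}\sigma^2$ exactly at $\sigma^2\asymp r_n^2$, with $\phi_n(\sigma)\le\sqrt{n}\sigma^2$ for all $\sigma^2\ge r_n^2$ and, when $\alpha>1$, $\phi_n$ constant on that range.

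Finally I would run the peeling argument upgraded with concentration, which is what keeps the bound free of logarithmic factors. Fix a large $M=M(\alpha,a)$ and partition $\{C:P\abs{C\Delta C_0}>Mr_n^2\}$ into shells $S_j=\{C:4^{j-1}Mr_n^2<P\abs{C\Delta C_0}\le 4^jMr_n^2\}$, $j\ge 1$, with $4^jMr_n^2\le 1$. On $\{\widehat{C}_n\in S_j\}$ the basic inequality forces the supremum over $S_j$ of $\abs{\G_n^{\xi}(\bm{1}_C-\bm{1}_{C_0})}+2a\abs{\G_n(\bm{1}_{C\Delta C_0})}$ to exceed $c_a\sqrt{n}\cdot 4^jMr_n^2$, while by the previous paragraph its expectation is $\lesssim_{\alpha,a}\phi_n(2^j\sqrt{M}\,r_n)$, which for $M$ large lies below that threshold by a factor $\gtrsim_\alpha M$ that is moreover increasing in $j$; since the indexing classes are uniformly bounded with summand variances $\lesssim 4^jMr_n^2$, Talagrand's (Bousquet's) inequality, applied to each of the two pieces, then gives $\Prob(\widehat{C}_n\in S_j)\le\exp\big(-c_aM4^jn^{\alpha/(\alpha+1)}\big)$, whence
\[
\E\,P\abs{\widehat{C}_n\Delta C_0}\ \le\ Mr_n^2+\sum_{j\ge 1}4^jMr_n^2\,\Prob(\widehat{C}_n\in S_j)\ \lesssim_{\alpha,a}\ r_n^2 ,
\]
uniformly over $C_0\in\mathscr{C}$. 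The regime $0<\alpha<1$ runs through identically (and is also covered by classical theory, being the Donsker case).

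The step I expect to be the main obstacle is the treatment of the design-dependent multipliers in the second paragraph: one must show that $\G_n^{\xi}$, whose multipliers are correlated with $X_i$, is still controlled by the plain empirical process over the difference class $\mathcal{F}$---symmetrizing the \emph{joint} i.i.d.\ array $(X_i,\xi_i)$ and then contracting away the bounded multipliers is what makes this go through, in contrast to a multiplier inequality requiring independence---and verify that $\mathcal{F}$ inherits the bracketing entropy \eqref{ineq:upper_bound_ep_1} from (E1). The other delicate point is the peeling step: a crude union bound via Markov would lose a logarithmic factor (and would only give the rate $n^{-1/2}$ when $\alpha<1$), so the exponential tail from Talagrand's inequality---usable precisely because past the rate $r_n^2$ the mean $\phi_n$ of the localized empirical process stops increasing while the deterministic threshold keeps growing like $4^j$---is essential for the advertised log-free bound.
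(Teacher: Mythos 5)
Your proposal is correct and follows essentially the same route as the paper: reduce via the basic inequality to two localized suprema, strip the design-dependent multipliers $\xi_i=f_{C_0}(X_i)(\eta_i-2a)$ by symmetrizing the joint i.i.d.\ array $(X_i,\xi_i)$ and applying the contraction principle conditionally on the data, then invoke Theorem \ref{thm:upper_bound_ep_set}, and finish by peeling. This is exactly the content of the paper's Lemma \ref{lem:lse_dependent_bounded_error} combined with Lemma \ref{lem:lse_additive_error}. Two localized differences are worth noting. First, you contract away the whole bounded multiplier $\xi_i/2$ in one step and work with the single class $\{\bm{1}_C-\bm{1}_{C_0}\}$, whereas the paper strips only $(\eta_i-2a)$ and then uses the identity $(\bm{1}_C-\bm{1}_{C_0})f_{C_0}=2(\bm{1}_{C\cap C_0}-\bm{1}_{C_0})-(\bm{1}_C-\bm{1}_{C_0})$ to land on two set-indexed classes; both are valid, yours being marginally more economical. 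Second, in the peeling step you use Talagrand's inequality to get exponential tails $\exp(-cM4^j n^{\alpha/(\alpha+1)})$, while the paper (in the proof of Lemma \ref{lem:lse_additive_error}) bounds the \emph{fourth moment} of each shell supremum via the Gin\'e--Lata\l{}a--Zinn moment inequality and applies Chebyshev, obtaining tails of order $2^{-4j}t^{-4}$, which sum geometrically over shells and integrate in $t$ to give the log-free expectation bound. So your closing remark that an exponential inequality is \emph{essential} to avoid a logarithmic loss is not accurate as a statement of necessity --- a polynomial tail with geometric decay across shells suffices --- though your Talagrand-based argument is itself perfectly sound and yields a stronger (exponential) tail on $P\abs{\widehat{C}_n\Delta C_0}$ than the paper records.
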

\begin{proof}
	See Section \ref{section:proof_application}.
\end{proof}

Compared to \cite[Theorem 4.1]{mammen1995asymptotical}, we use an unsieved least squares estimator to achieve the optimal rate, rather than their theoretical `sieved' estimator. This provides another example for which the simple least squares estimator can be rate-optimal for non-Donsker function classes in a natural setting.

\subsection{Binary classification: excess risk bounds}\label{section:binary_classification}

In this subsection we consider the binary classification problem in the learning theory, cf. \cite{tsybakov2004optimal,massart2006risk}. Suppose one observes i.i.d. $(X_1,Y_1),\ldots,(X_n,Y_n)$ with law $P$, where $X_i$'s take values in $\mathcal{X}$, and the responses $Y_i \in \{0,1\}$. A classifier $g: \mathcal{X} \to \{0,1\}$ over a class $\mathcal{G}$ has a generalization error $P(Y\neq g(X))$. The excess risk for a classifier $g$ over $\mathcal{G}$ under law $P$ is given by
\begin{align*}
\mathcal{E}_P(g)\equiv P(Y\neq g(X))-\inf_{g' \in \mathcal{G}} P(Y\neq g'(X)).
\end{align*}
It is known that for a given law $P$ on $(X,Y)$, the minimal generalized error is attained by a Bayes classifier $g_0(x)\equiv \bm{1}_{\eta(x)\geq 1/2}$ where $\eta(x)\equiv \E[Y|X=x]$, cf. \cite{devroye1996probabilistic}. It is then natural to consider an estimator of $g_0$ by minimizing the empirical training error:
\begin{align}\label{def:ERM_classification}
\widehat{g}_n \equiv \argmin_{g \in \mathcal{G}} \frac{1}{n}\sum_{i=1}^n \bm{1}_{Y_i\neq g(X_i)}.
\end{align}
We assume $g_0 \in \mathcal{G}$ for simplicity. The global ERM $\widehat{g}_n$ is previously studied in, e.g., \cite[pp. 136]{tsybakov2004optimal}, \cite[pp. 2327]{massart2006risk}, \cite[pp. 2627-2629]{koltchinskii2006local}, \cite[pp. 1211-1213]{gine2006concentration}. The quality of the estimator $\widehat{g}_n$ is measured by the excess risk:
\begin{align*}
\mathcal{E}_P(\widehat{g}_n)\equiv P(Y\neq \widehat{g}_n(X))-P(Y\neq g_0(X)).
\end{align*}
Let $\Pi$ be the marginal distribution of $X$ under $P$. We assume the following `Tsybakov's margin(low noise) condition' (cf. \cite{mammen1999smooth,tsybakov2004optimal}): there exists some $c>0$ such that for all $g \in \mathcal{G}$,
\begin{align}\label{ineq:margin_cond}
\mathcal{E}_P(g)\geq c\big(\Pi(g(X)\neq g_0(X))\big)=c \pnorm{g-g_0}{L_2(P)}^{2}.
\end{align}
Here we have assumed that the margin condition holds with $\kappa=1$. Although faster rates are possible under more general margin condition $\kappa\geq 1$ (cf. \cite{mammen1999smooth,tsybakov2004optimal}),  we do not go into this direction to avoid distraction from our main points. 

Below is the main result in this subsection, the formulation of which follows that of \cite{koltchinskii2006local,gine2006concentration}.

\begin{theorem}\label{thm:excess_risk_binary}
	Suppose $\mathcal{G}\equiv \{\bm{1}_C:C \in \mathscr{C}\}$ satisfies the following entropy condition: there exists some $\alpha\neq 1$ such that for all $\epsilon>0$, $
	\log \mathcal{N}_I(\epsilon, \mathscr{C},P)\leq L\epsilon^{-\alpha}$. If $r_n^2\geq K n^{-1/(\alpha+1)}$ for a large enough constant $K>0$, then
	\begin{align*}
	\Prob\left(\mathcal{E}_P(\widehat{g}_n)\geq r_n^2\right)\leq K'\exp(-nr_n^2/K')
	\end{align*}
	holds for some constant $K'>0$.
\end{theorem}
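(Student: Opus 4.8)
The plan is to combine the elementary comparison inequality for empirical risk minimizers with a peeling argument over the excess risk, controlling the localized empirical processes that arise by Theorem~\ref{thm:upper_bound_ep_set} (equivalently Theorem~\ref{thm:upper_bound_ep}(2)) and converting the resulting expectation bounds into exponential tails via Talagrand's concentration inequality. The first step is a reduction to a loss-difference process. Writing $g_0=\bm{1}_{C_0}\in\mathcal{G}$, set $h_C(x,y)\equiv\bm{1}_{y\neq\bm{1}_C(x)}-\bm{1}_{y\neq\bm{1}_{C_0}(x)}$ for $C\in\mathscr{C}$. A direct case check gives $h_C(x,y)=(2y-1)\big(\bm{1}_{C_0\setminus C}(x)-\bm{1}_{C\setminus C_0}(x)\big)$, whence $\abs{h_C(x,y)}=\bm{1}_{C\Delta C_0}(x)$, $\pnorm{h_C}{\infty}\leq1$, $Ph_C^2=\Pi(C\Delta C_0)=\pnorm{\bm{1}_C-\bm{1}_{C_0}}{L_2(P)}^2$, and, since $g_0$ is a Bayes classifier and the margin condition (\ref{ineq:margin_cond}) holds, $Ph_C=\mathcal{E}_P(\bm{1}_C)\geq c\,\Pi(C\Delta C_0)=c\,Ph_C^2\geq0$. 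As $\widehat g_n=\bm{1}_{\widehat C_n}$ minimizes the empirical $0$-$1$ risk and $g_0\in\mathcal{G}$, we have $\Prob_n h_{\widehat C_n}\leq0$, hence
\begin{align*}
\mathcal{E}_P(\widehat g_n)=Ph_{\widehat C_n}\leq -(\Prob_n-P)h_{\widehat C_n}=-n^{-1/2}\G_n(h_{\widehat C_n})\leq n^{-1/2}\sup_{C\in\mathscr{C}}\abs{\G_n(h_C)}.
\end{align*}

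Next I would establish a localized expectation bound for $\mathscr{H}(\sigma)\equiv\{h_C:\Pi(C\Delta C_0)\leq\sigma^2\}$. The identities above show that $\mathscr{H}\equiv\{h_C:C\in\mathscr{C}\}$ is uniformly bounded by $1$ with $\pnorm{h}{L_1(P)}=\pnorm{h}{L_2(P)}^2$ for each member --- exactly the structural feature exploited in the proof of Theorem~\ref{thm:upper_bound_ep_set} --- and that $\mathscr{H}$ inherits the entropy bound $\log\mathcal{N}_{[\,]}(\epsilon,\mathscr{H},L_1(P))\lesssim_\alpha\epsilon^{-\alpha}$: one splits $h_C$ along $C_0\setminus C$ and $C\setminus C_0$, lifts brackets of $\bm{1}_C$ to brackets of these two set indicators (via $\bm{1}_{C_0\setminus C}=\bm{1}_{C_0}(1-\bm{1}_C)$, etc.), and then to brackets of $h_C$ that are constant in $y$; since the bracket-width functions are indicator-type, the $L_1$ bound also yields the $L_2$ bound needed in (\ref{ineq:upper_bound_ep_1}) when $\alpha<1$. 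Hence $\mathscr{H}$ satisfies (\ref{ineq:upper_bound_ep_1}) with the same $\alpha$, and Theorem~\ref{thm:upper_bound_ep}(2) --- equivalently, splitting $\G_n(h_C)$ into two empirical processes indexed by $\{C_0\setminus C\}$ and $\{C\setminus C_0\}$ and applying Theorem~\ref{thm:upper_bound_ep_set} --- gives, for $\sigma^2\gtrsim n^{-1/(\alpha+1)}$ and $\alpha\neq1$,
\begin{align*}
\E\sup_{h\in\mathscr{H}(\sigma)}\abs{\G_n(h)}\lesssim_\alpha\max\big\{\sigma^{1-\alpha},\,n^{(\alpha-1)/2(\alpha+1)}\big\}.
\end{align*}

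Then I would peel over the excess risk. Decompose $\{\mathcal{E}_P(\widehat g_n)\geq r_n^2\}=\bigcup_{j\geq1}\{2^{j-1}r_n^2\leq\mathcal{E}_P(\widehat g_n)<2^jr_n^2\}$; on the $j$-th event the margin condition forces $\Pi(\widehat C_n\Delta C_0)\leq c^{-1}2^jr_n^2=:\sigma_j^2$ (and $\sigma_j^2\gtrsim n^{-1/(\alpha+1)}$ once $K$ is large), while the comparison inequality gives $2^{j-1}r_n^2\leq n^{-1/2}\sup_{h\in\mathscr{H}(\sigma_j)}\abs{\G_n(h)}$, so that
\begin{align*}
\Prob\big(\mathcal{E}_P(\widehat g_n)\geq r_n^2\big)\leq\sum_{j\geq1}\Prob\Big(\sup_{h\in\mathscr{H}(\sigma_j)}\abs{\G_n(h)}\geq\sqrt n\,2^{j-1}r_n^2\Big).
\end{align*}
A short computation with $r_n^2\geq Kn^{-1/(\alpha+1)}$ (done separately for $\alpha<1$ and $\alpha>1$, noting $\sigma_j^{1-\alpha}\leq n^{(\alpha-1)/2(\alpha+1)}$ in the latter case) shows that for $K$ large enough depending on $(\alpha,c)$ the localized expectation bound above is $\leq\tfrac12\sqrt n\,2^{j-1}r_n^2$ for all $j\geq1$, so that the mean is dominated. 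Talagrand's (Bousquet's) inequality for $\sup_{h\in\mathscr{H}(\sigma_j)}\abs{\G_n(h)}$, with envelope $1$ and $\sup_hPh^2\leq\sigma_j^2\asymp_c2^jr_n^2$, then bounds the $j$-th probability by $\lesssim\exp(-c'n2^jr_n^2)$ (both terms of the Bernstein-type split being of this order), and summing this geometric-type series over $j\geq1$ while using $nr_n^2\geq K$ yields $K'\exp(-nr_n^2/K')$.

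The main obstacle will be the localized expectation bound: one must recognize that the $0$-$1$ loss excess-risk class $\{h_C\}$, though not itself a class of set indicators, nonetheless inherits their defining ``set structure'' ($L_1$-size equal to $L_2^2$-size, together with the $L_1$-bracketing entropy bound), so that the sharp non-Donsker estimate of Theorem~\ref{thm:upper_bound_ep_set}/\ref{thm:upper_bound_ep}(2) applies and delivers the exponent $n^{(\alpha-1)/2(\alpha+1)}$ in place of the classical $n^{(\alpha-1)/2\alpha}$ coming from Dudley's integral. The margin condition plays the complementary role of making the localization radius $\sigma_j\asymp(2^jr_n^2)^{1/2}$ compatible with the excess-risk scale $2^jr_n^2$, which is what turns this improved empirical-process bound into the fast rate $n^{-1/(\alpha+1)}$; the comparison inequality, the peeling step, and the expectation-to-tail passage are otherwise routine.
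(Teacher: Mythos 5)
Your proposal is correct, and all the essential ingredients coincide with the paper's: the margin condition to convert excess-risk localization into $L_2(P)$ localization, the observation that the loss-difference class inherits the ``set structure'' ($L_1$-size equal to squared $L_2$-size plus the $L_1$-bracketing entropy of $\mathscr{C}$) so that Theorem \ref{thm:upper_bound_ep_set} delivers the non-Donsker rate $n^{(\alpha-1)/2(\alpha+1)}$, Talagrand's inequality to upgrade the expectation bound to a tail bound, and a peeling over dyadic excess-risk shells. The organization differs in one respect worth noting. The paper works with the two-parameter classes $\mathcal{F}_j=\{f_{g_1}-f_{g_2}:g_1,g_2\in\mathcal{G}(r_n^22^j)\}$ and, following Gin\'e--Koltchinskii, establishes a uniform two-sided ratio bound $\abs{\mathcal{E}_{\Prob_n}(g)/\mathcal{E}_P(g)-1}\lesssim 1/2$ over all $g$ with $\mathcal{E}_P(g)\geq r_n^2$, from which $\mathcal{E}_P(\widehat g_n)<r_n^2$ follows on the good event because $\mathcal{E}_{\Prob_n}(\widehat g_n)\leq0$; it also reduces to genuine set indicators $f_g=\bm{1}_S$ with $S\subset\mathcal{X}\times\{0,1\}$ rather than invoking the $L_1$-entropy version of Theorem \ref{thm:upper_bound_ep}(2) for $h_C$ directly. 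Your one-sided route --- basic inequality $\Prob_n h_{\widehat C_n}\leq0$ plus shell-by-shell Talagrand applied to $\{f_g-f_{g_0}\}$ --- is a slightly leaner path to the stated one-sided tail bound and exploits $g_0\in\mathcal{G}$ more directly; the paper's ratio formulation is marginally more informative (it controls $\mathcal{E}_{\Prob_n}/\mathcal{E}_P$ in both directions for all $g$, not just the minimizer) but proves the same theorem. Your lifting of brackets for $\bm{1}_C$ to brackets for $h_C$ via the decomposition $h_C=(2y-1)(\bm{1}_{C_0\setminus C}-\bm{1}_{C\setminus C_0})$, handled separately on $\{y=0\}$ and $\{y=1\}$, is sound, and your verification that the localized mean is dominated by $\tfrac12\sqrt n\,2^{j-1}r_n^2$ in both regimes $\alpha<1$ and $\alpha>1$ is exactly the computation needed.
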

\begin{proof}
	See Section \ref{section:proof_application}.
\end{proof}

Roughly speaking, the key to prove the above theorem is a control for the random variable
\begin{align*}
\max_{1\leq j\leq \ell} \frac{\sup_{f \in \mathcal{F}_j} \abs{\Prob_n(f)-P(f)}  }{r_n^2 2^j}
\end{align*}
where $\ell$ is the smallest integer such that $r_n^2 2^\ell\geq 1$, and $\mathcal{F}_j\equiv \{\bm{1}_{y\neq g_1(x)}-\bm{1}_{y\neq g_2(x)}: \mathcal{E}_P(g_1)\vee \mathcal{E}_P(g_2)\leq r_n^2 2^j\}$. A sharp estimate for the above random variable is achieved by an application of Theorem \ref{thm:upper_bound_ep_set} and Talagrand's inequality (cf. Appendix \ref{section:tools}).

Examples of $\mathcal{G}$ that satisfy the prescribed entropy conditions in the above theorem can be found in the comments after \cite[Theorem 1, pp. 1813]{mammen1999smooth}. To put the above results in the literature, \cite{tsybakov2004optimal} considered the same problem under the working assumption $\alpha \in (0,1)$ (cf. \cite[Assumption A2, pp. 140]{tsybakov2004optimal}). \cite{massart2006risk} used ratio-type empirical process techniques to give a more unified treatment of deriving risk bounds for this problem, when the class of classifiers satisfies a Donsker bracketing entropy condition (i.e. $0<\alpha<1$), or a Donsker uniform entropy condition. \cite{gine2006concentration} further improved the result of \cite{massart2006risk} in the Donsker regime under a uniform entropy condition, by taking into account the size of the localized envelopes. See also \cite[pp. 2618]{koltchinskii2006local}, \cite[pp. 1706]{lecue2007simultaneous} for similar Donsker conditions. To the best knowledge of the author, our Theorem \ref{thm:excess_risk_binary} gives a first result for the global ERM $\widehat{g}_n$ in (\ref{def:ERM_classification}) to be rate-optimal in the non-Donsker regime $\alpha>1$ in the classification problem.

\subsection{Multiple isotonic regression}

Let $X_1,\ldots,X_n$ be i.i.d. with law $P$ on $[0,1]^d$. For simplicity we assume that $P$ is the uniform distribution on $[0,1]^d$. Consider the multiple isotonic regression model
\begin{align}\label{eqn:reg_model_shape}
Y_i = f_0(X_i)+\xi_i,\quad i=1,\ldots,n,
\end{align}
where $\xi_i$'s are i.i.d. Gaussian errors $\mathcal{N}(0,1)$, and  $f_0 \in \mathcal{M}_d \equiv \{f: [0,1]^d \to \R, f(x)\leq f(y)\textrm{ for any }x\leq y\}$. Consider the isotonic least squares regression estimator $\widehat{f}_n$ defined via:
\begin{align*}
\widehat{f}_n \equiv \argmin_{f \in \mathcal{M}_d} \sum_{i=1}^n \big(Y_i-f(X_i)\big)^2.
\end{align*}
The performance of $\widehat{f}_n$ in the multivariate setting is examined by \cite{chatterjee2018matrix} for $d=2$ and \cite{han2017isotonic} for $d\geq 3$. By the entropy estimate for uniformly bounded multiple isotonic functions in \cite{gao2007entropy}, $\mathcal{M}_d \cap L_\infty(1)$ is in the non-Donsker regime when $d\geq 3$, which is the main interesting case here.

\begin{theorem}\label{thm:iso_reg}
Let $d\geq 2$. Then with $\gamma_{d,\textrm{iso}} =(2)\bm{1}_{d=2}+\bm{1}_{d\geq 3}$, 
\begin{align*}
\sup_{f_0 \in \mathcal{M}_d \cap L_\infty(1)} \E_{f_0} \pnorm{\widehat{f}_n-f_0}{L_2(P)}^2\lesssim_d n^{-1/d} \log^{\gamma_{d,\textrm{iso}} } n.
\end{align*}
\end{theorem}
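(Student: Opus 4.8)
The plan is to reduce the risk bound for the isotonic LSE to an empirical process over a class of measurable sets, and then invoke Theorem \ref{thm:upper_bound_ep_set}. First I would recall the min--max (max--min) representation of the isotonic LSE: at any design point $X_i$, $\widehat{f}_n(X_i)$ equals an average of the responses $Y_j$ over blocks determined by upper and lower sets in the coordinatewise partial order on $[0,1]^d$, cf. \cite{robertson1988order}. The key structural consequence is that the level sets $\{\widehat{f}_n \geq t\}$ are (random) upper sets, and similarly $\{\widehat{f}_n < t\}$ are lower sets. Thus the ``set structure'' here is the class $\mathscr{U}_d$ of upper sets in $[0,1]^d$ (equivalently lower sets), whose $L_1(P)$ bracketing entropy obeys $\log\mathcal{N}_I(\epsilon,\mathscr{U}_d,P)\lesssim_d \epsilon^{-(d-1)}$ for $d\geq 2$ by the entropy estimate for upper/lower sets (cf. \cite[Section 8.3]{dudley1999uniform}); so condition (E1) holds with $\alpha = d-1$, which exceeds $1$ precisely when $d\geq 3$.

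Next I would set up the standard basic inequality / localization argument for the LSE: writing $r_n^2 \equiv \pnorm{\widehat f_n - f_0}{L_2(P)}^2$ (or its empirical $L_2$ version, after a discretization/concentration step comparing $\Prob_n$ to $P$ on this class), the risk is controlled by the location of the maximum of $r\mapsto \E\sup_{f\in \mathcal{M}_d\cap L_\infty(C): \pnorm{f-f_0}{}\leq r}\abs{\G_n(\xi\,(f-f_0))} + \abs{\G_n((f-f_0)^2)} - \sqrt n r^2$, as in (\ref{ineq:risk_characterization}) and \cite{han2017sharp}. The crucial point is that the multiplier part $\sup_f \abs{\sum_i \xi_i (f-f_0)(X_i)}$ and the quadratic part can both be bounded, via a chaining/peeling over the levels of $f$ and $f_0$ and the layer-cake (co-area) decomposition $f-f_0 = \int (\bm 1_{\{f\geq t\}}-\bm 1_{\{f_0\geq t\}})\,\d t$, in terms of $\sup_{C\in\mathscr{U}_d(\delta)}\abs{\G_n(C)}$ and its Gaussian-multiplier analogue $\sup_{C\in\mathscr{U}_d(\delta)}\abs{n^{-1/2}\sum_i \xi_i \bm 1_C(X_i)}$ with $\delta^2$ of order $r_n^2$ — up to at most one extra logarithmic factor coming from the integral over $t$ and the union over a dyadic grid of levels. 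Theorem \ref{thm:upper_bound_ep_set} (applied \emph{without} localization, as remarked in Section \ref{section:application}, since in the non-Donsker regime the global supremum is already diverging) then gives $\E\sup_{C\in\mathscr{U}_d}\abs{\G_n(C)}\lesssim_d n^{(\alpha-1)/2(\alpha+1)} = n^{(d-2)/2d}$ for $d\geq 3$, and the Gaussian version is handled by the same bound after the symmetrization/multiplier-inequality device of Remark \ref{rmk:lower_bound_entropy}(1). Feeding this into the fixed-point equation $n^{(d-2)/2d}\cdot(\text{log factor}) \asymp \sqrt n \, r_n^2$ yields $r_n^2 \lesssim_d n^{-1/d}\log^{\gamma} n$. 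For $d=2$ one is in the Donsker regime ($\alpha=1$), the boundary case of Theorem \ref{thm:upper_bound_ep_set}/Remark \ref{rmk:upper_bound_entropy}(2) applies and produces the rate $n^{-1/2}$ with the stated $\log^2 n$ factor.

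The main obstacle I anticipate is \emph{honestly reducing the LSE problem to the set-indexed empirical process} — i.e. justifying that the complexity of $\mathcal{M}_d\cap L_\infty(1)$ relevant to the LSE is captured by $\mathscr{U}_d$ rather than by the much larger $L_2$-entropy of $\mathcal{M}_d$ itself (which is what produces the pessimistic $\bar r_n$). Concretely, this requires (a) controlling the uniform sup-norm of $\widehat f_n$, or truncating, so that the layer-cake integral over $t$ runs over a bounded range and only contributes logarithmically; (b) handling the dependence between the random level $t$ and the random set $\{\widehat f_n\geq t\}$ via a chaining argument over a \emph{deterministic} dyadic net of levels together with a union bound, so that for each fixed level we face a genuine supremum over the fixed class $\mathscr{U}_d(\delta)$; and (c) transferring between population and empirical $L_2$ norms on $\mathscr{U}_d$, which is itself a ratio-type empirical process statement but is routine given (E1). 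Once these reductions are in place, the quantitative heart of the argument is entirely supplied by Theorem \ref{thm:upper_bound_ep_set}, which is exactly the point of the paper: the improved $L_1$-entropy-based bound $n^{(\alpha-1)/2(\alpha+1)}$ replaces the naive $n^{(\alpha-1)/2\alpha}$ and makes the fixed point land on the minimax rate.
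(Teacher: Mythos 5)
Your proposal follows essentially the same route as the paper: truncate using the sup-norm control of the isotonic LSE under Gaussian noise (the paper's Lemma \ref{lem:trunc_iso_reg}), reduce the empirical process over $\mathcal{M}_d\cap L_\infty(C\sqrt{\log n})$ to one over the class $\mathcal{L}_d$ of upper and lower sets via the layer-cake representation, apply Theorem \ref{thm:upper_bound_ep_set} with $\alpha=d-1$ (and the boundary case of Remark \ref{rmk:upper_bound_entropy}(2) for $d=2$), handle the Gaussian multipliers via Lemma \ref{lem:multiplier_ineq} and the contraction principle, and solve the fixed-point equation. The one place you overcomplicate is your anticipated obstacle (b): no chaining over a dyadic net of levels and no union bound over levels is needed, because the reduction is a deterministic pathwise inequality --- for every $f\in\mathcal{M}_d$ one has $\abs{(\Prob_n-P)f}\leq 2\pnorm{f}{\infty}\sup_{C\in\mathcal{L}_d}\abs{(\Prob_n-P)(C)}$, since for each fixed $t$ the integrand $(\Prob_n-P)(\{f_\pm>t\})$ is dominated by the global supremum over $\mathcal{L}_d$ and the $t$-integral only contributes the factor $\pnorm{f}{\infty}\lesssim\sqrt{\log n}$; taking the supremum over $f$ and then the expectation completes the reduction, so the ``dependence between the random level and the random set'' never arises. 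Likewise your step (c) (empirical-versus-population norm transfer on $\mathcal{L}_d$) is not needed, since Theorem \ref{thm:upper_bound_ep_set} is applied without localization, exactly as remarked at the start of Section \ref{section:application}.
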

\begin{proof}
	See Section \ref{section:proof_application}.
\end{proof}

Compared to \cite[Theorem 4]{han2017isotonic}, the above result gives improvements over logarithmic factors. The logarithmic factors in $d\geq 3$ are due to boundary behavior of $\widehat{f}_n$. For instance, if the errors are bounded, then we may remove these logarithmic factors to get a sharp rate $n^{-1/d}$ for $d\geq 3$. These logarithmic factors cannot be removed by the proof techniques in \cite{han2017isotonic} even if the errors are bounded. The rate $n^{-1/d}$ is shown to be minimax optimal for squared $L_2$ loss in \cite{han2017isotonic}.

	The proof of Theorem \ref{thm:iso_reg} contains two inter-related steps:
\begin{enumerate}
	\item First, we show that with high enough probability,
	\begin{align*}
	\pnorm{\widehat{f}_n-f_0}{\infty} = \mathcal{O}(\sqrt{\log n})
	\end{align*}
	under the Gaussian noise assumption.
	\item Second, using the first step, we show that
	\begin{align*}
	\E \sup_{f \in \mathcal{M}_d \cap L_\infty(C\sqrt{\log n})} \abs{\G_n(f-f_0)}\lesssim \sqrt{\log n} \cdot \E \sup_{C \in \mathcal{L}_d} \abs{\G_n(C)},
	\end{align*}
	where $\mathcal{L}_d$ is the collection of all upper and lower sets contained in $[0,1]^d$ (precise definition see the paragraph before the proof of Theorem \ref{thm:iso_reg} in Section \ref{section:proof_application}). The expected supremum on the right hand side of the above display can be controlled using Theorem \ref{thm:upper_bound_ep_set}. Finally the claim follows by a standard reduction for the risk of LSE to expected supremum of empirical process.
\end{enumerate}

 Compared to the proof of \cite[Theorem 4]{han2017isotonic}, the proof strategy described above is more informative by making a clear connection to the class $\mathcal{L}_d$ that drives the minimax rates of convergence for the multiple isotonic LSE. See also Remark \ref{rmk:isotonic_reg} for a detailed technical comparison.

The approach outlined above can also be adapted to the problem of multivariate convex regression modulo technical difficulties due to unsolved boundary behavior of the convex LSE. See Remark \ref{rmk:conv_reg} for some details.

\subsection{$s$-concave density estimation in $\R^d$}

We first introduce the class of $s$-concave densities on $\R^d$. The exposition follows that of \cite{han2015approximation}. Let
\begin{align*}
M_s(a,b;\theta)\equiv
\begin{cases}
\big((1-\theta)a^s+\theta b^s\big)^{1/s}, & s\neq 0, a,b > 0,\\
0, & s <0, ab = 0,\\
a^{1-\theta}b^\theta, &s=0,\\
a\wedge b, &s=-\infty.
\end{cases}
\end{align*}
A density $p$ on $\R^d$ is called $s$-concave, i.e. $p \in \mathcal{P}_s$ 
if and only if for all $x_0,x_1\in \R^d$ and $\theta \in (0,1)$,
$p\big((1-\theta)x_0+\theta x_1\big)\geq M_s(p(x_0),p(x_1);\theta)$. It is easy to see that the densities $p$ have the form $p=\varphi_+^{1/s}$ 
for some concave function $\varphi$ if $s>0$, $p=\exp(\varphi)$ for some concave 
$\varphi$ if $s=0$, and $p=\varphi_+^{1/s}$ for some convex $\varphi$ if $s<0$. 
The function classes $\mathcal{P}_s$ are nested in $s$ in that for every $r>0>s$, we have
$\mathcal{P}_r\subset \mathcal{P}_0\subset \mathcal{P}_s\subset \mathcal{P}_{-\infty}.$

Maximum likelihood estimation over $\mathcal{P}_s$ is proposed in \cite{seregin2010nonparametric}, where existence and consistency of the MLE $\widehat{p}_n$ is proved. Global rates of convergence of the MLE $\widehat{p}_n$ over $\mathcal{P}_s$ is primarily studied in the special case $s=0$, also known as the log-concave MLE, cf. \cite{kim2016global}. For general $s$-concave MLEs, the only result concerning global convergence rates is due to \cite{doss2013global}, who studied the univariate case $d=1,s>-1$, showing that $h^2(\widehat{p}_n,p_0) = \mathcal{O}_{\mathbf{P}}(n^{-4/5})$, where $h(\cdot,\cdot)$ is the Hellinger distance. Here we will be interested in general $s$-concave MLEs in general dimensions. 

\begin{theorem}\label{thm:rate_s_concave}
Suppose $s>-1/d$ and $d\geq 2$. Then 
\begin{align*}
h^2(\widehat{p}_n,p_0) = \mathcal{O}_{\mathbf{P}}(n^{-2/(d+1)}\log^{\gamma_{d,s}} n),
\end{align*}
where $
\gamma_{d,s} = (2/3)\bm{1}_{d=2}+(2)\bm{1}_{d=3}+\bm{1}_{d\geq 4}$.
\end{theorem}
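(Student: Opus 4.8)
The plan is to treat the $s$-concave MLE $\widehat p_n$ exactly as the isotonic LSE was treated in Theorem~\ref{thm:iso_reg}: first obtain an a~priori localization of $\widehat p_n$, then reduce the squared Hellinger risk $h^2(\widehat p_n,p_0)$ to an expected supremum of an empirical process indexed by a class of convex bodies, and finally apply Theorem~\ref{thm:upper_bound_ep_set}. Existence and Hellinger-consistency of $\widehat p_n$ for $s>-1/d$ are due to \cite{seregin2010nonparametric}, and $\widehat p_n=\widehat\varphi_+^{1/s}$ (resp.\ $\exp\widehat\varphi$) for a convex/concave $\widehat\varphi$; see also \cite{han2015approximation} and Remark~\ref{rmk:s_concave_reduction}. \textbf{Step 1} is an a~priori bound: the threshold $s>-1/d$ is precisely the regime in which an $s$-concave density is integrable both near a zero set of $\widehat\varphi$ (the ``boundary'' of its support) and in its polynomially decaying tails, and exploiting this one shows that with probability $1-n^{-c}$ the MLE $\widehat p_n$ places all but $O(n^{-1})$ of its mass in a Euclidean ball of radius $\lesssim\log n$, is bounded above by a power of $\log n$, and stays bounded below by a negative power of $\log n$ on a set carrying $1-1/n$ of the $p_0$-mass (the corresponding statements for $p_0$ being deterministic). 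This is the analogue of the sup-norm step $\pnorm{\widehat f_n-f_0}{\infty}=\mathcal O(\sqrt{\log n})$ in Theorem~\ref{thm:iso_reg}, and it is where $d\ge 2$ and the exponent $\gamma_{d,s}$ enter.

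\textbf{Step 2} reduces the risk to convex bodies. On the event of Step~1, the standard MLE ``basic inequality'' (cf.\ \cite{van2000empirical}; for $s$-concave densities the reduction of \cite{carpenter2018near,dagan2019log} is convenient) bounds $h^2(\widehat p_n,p_0)$, up to $\mathrm{polylog}(n)$ factors, by $n^{-1/2}\,\phi_n\big(h(\widehat p_n,p_0)\big)$, where $\phi_n(\delta)=\E\sup\big\{\abs{\G_n(m_p)}:p\in\mathcal P_s,\ h(p,p_0)\le\delta\big\}$ over the truncated class and $m_p$ is a uniformly bounded transform of $\sqrt{p/p_0}$. The ``set structure'' is that for $s$-concave $p$ the upper level sets $\{p\ge t\}$ are convex; discretizing the range of $p$ on a $\mathrm{polylog}(n)$-fine grid thus writes $m_p$ as a bounded combination of $O(\mathrm{polylog}\,n)$ indicators of convex bodies plus an $L_1(P)$-negligible remainder. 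By Bronshtein's theorem \cite[Theorem~8.4.1, Corollary~8.4.2]{dudley1999uniform}, the convex bodies inside the radius-$\log n$ ball satisfy $\log\mathcal N_I(\epsilon,\mathscr C_d,P)\lesssim \mathrm{polylog}(n)\cdot\epsilon^{-(d-1)/2}$, so Assumption~(E1) holds with $\alpha=(d-1)/2$, and the constraint $h(p,p_0)\le\delta$ forces the relevant sets to have $P$-measure $\lesssim\delta^2$. Applying Theorem~\ref{thm:upper_bound_ep_set} when $d\ge 2$ and $d\ne 3$ (so $\alpha\ne 1$), and Remark~\ref{rmk:upper_bound_entropy}(2) when $d=3$ (so $\alpha=1$), gives, for $\delta^2\gtrsim n^{-1/(\alpha+1)}$,
\[
\phi_n(\delta)\ \lesssim\ \mathrm{polylog}(n)\cdot\max\big\{\delta^{1-\alpha},\ n^{(\alpha-1)/2(\alpha+1)}\big\},\qquad \alpha=(d-1)/2,
\]
when $d\ne 3$, while for $d=3$ one has only $\phi_n(\delta)\lesssim\log^{O(1)}n$.

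\textbf{Step 3} is the fixed-point step: combining with $h^2(\widehat p_n,p_0)\lesssim n^{-1/2}\phi_n(h(\widehat p_n,p_0))$ and solving $\sqrt n\,\delta^2\asymp\max\{\delta^{1-\alpha},n^{(\alpha-1)/2(\alpha+1)}\}$ (modulo logs) gives $\delta^2\asymp n^{-1/(\alpha+1)}=n^{-2/(d+1)}$; the $\log^{\gamma_{d,s}}n$ correction is assembled from the $\mathrm{polylog}$ factors of Steps~1--2, with $d=3$ picking up one extra $\log n$ from $\alpha=1$. \textbf{The main obstacle} is Step~1 at the edge $s=-1/d$ of the existence regime: the MLE can a~priori have heavy tails and degenerate near the boundary of its support, so one must quantify these effects sharply enough to reduce everything to a compact region on which $p_0$ is sandwiched between powers of $\log n$, losing only logarithmic factors; one must also carefully track how the radius-$\log n$ truncation enters the entropy constant (the ``$L$'' of Assumption~(E1)), and hence Theorem~\ref{thm:upper_bound_ep_set}, in order to pin down $\gamma_{d,s}$. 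Once this is done, the level-set decomposition and the invocation of Theorem~\ref{thm:upper_bound_ep_set} follow the template already used for image, edge, and isotonic estimation.
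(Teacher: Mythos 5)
Your high-level architecture for $d\geq 3$ (truncate the MLE, reduce to an empirical process over convex bodies, apply Theorem \ref{thm:upper_bound_ep_set}) is the same as the paper's, but several of your concrete steps do not go through as stated. First, the a priori localization in your Step 1 is not the right one: for $s<0$ the correct truncation is at the \emph{polynomial} level $c_n=n^{-C_1}$ (so that $X_1,\ldots,X_n$ all land in $\{p_0\geq c_n\}$ with high probability and the discarded mass is $\mathcal{O}(n^{-1})$), and the resulting effective support $\{p_0\geq c_n\}$ has radius of order $n^{-C_1 s}$, i.e.\ polynomial in $n$, not $\lesssim\log n$. Restricting Bronshtein's theorem to a ball of that radius would inject a polynomial (or at best a large polylog) factor into the entropy constant $L$ of (E1) and destroy $\gamma_{d,s}$. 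The paper instead proves $\log\mathcal{N}_I(\epsilon,\mathscr{C}_d,P_0)\lesssim\epsilon^{(1-d)/2}$ with an $n$-free constant by a weighted domain-extension: partition $\R^d$ into dyadic boxes $I_\ell$, allocate bracket sizes $\epsilon_\ell=\abs{I_\ell}^{1+\delta}\epsilon$, and absorb the growth of the boxes using the tail envelope (\ref{ineq:s_concave_0}), which is exactly where $s>-1/d$ is used. You flag this as ``the main obstacle'' but your proposed resolution (everything lives in a $\log n$-ball where $p_0$ is sandwiched between powers of $\log n$) is false and cannot be repaired into the needed statement. Second, your choice of loss transform $m_p\sim\sqrt{p/p_0}$ breaks the set structure: level sets of a \emph{ratio} of two $s$-concave densities are not convex, so $m_p$ cannot be decomposed into indicators of convex bodies. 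The paper's reduction works precisely because it bounds $\abs{(\Prob_n-\widetilde{P}_0)\log\widetilde{p}_n}$, and the level sets of $\log\widetilde{p}_n$ \emph{alone} are convex; the single factor $\log n$ then comes from the range $[\log c_n,\log M]$ of $\log\widetilde{p}_n$ in the layer-cake integral, not from a polylog-fine discretization.

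Third, your localization/fixed-point scheme (Steps 2--3) is both unjustified and, for $d=2$, insufficient. The claim that $h(p,p_0)\leq\delta$ forces the relevant convex sets to have $P$-measure $\lesssim\delta^2$ is not proved and is delicate (Hellinger proximity does not control the symmetric difference of level sets uniformly over levels); the paper sidesteps this entirely by applying Theorem \ref{thm:upper_bound_ep_set} \emph{without} localization, which costs nothing when $\alpha=(d-1)/2\geq 1$ since the unlocalized supremum is already of order $n^{(\alpha-1)/2(\alpha+1)}$. For $d=2$ (the Donsker case $\alpha=1/2$) the set-indexed reduction cannot deliver the stated $\log^{2/3}n$: even granting your localization, the level-set decomposition of $m_p$ costs at least one factor of $\log n$ (the number of levels), and the fixed-point equation $\sqrt{n}\delta^2\asymp\log^{a}n\cdot\delta^{1/2}$ returns $\delta^2\asymp n^{-2/3}\log^{4a/3}n$, which matches $\gamma_{2,s}=2/3$ only if $a=1/2$ --- i.e.\ only if one has the genuine function-class bracketing bound $\log\mathcal{N}_{[\,]}(\epsilon,\mathcal{P}_{s,M},h)\lesssim\epsilon^{-1}\log(1/\epsilon)$ and runs the classical entropy-integral argument. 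That is exactly what the paper does for $d=2$: it abandons the set reduction and directly brackets the $s$-concave densities by discretizing the range of the underlying convex functions (following \cite{doss2013global,kim2016global}), then invokes \cite[Theorem 3.4.4]{van1996weak}. So your proposal is missing the entire $d=2$ argument, and for $d\geq 3$ the two pivotal ingredients (the weighted entropy bound over all of $\R^d$ and the convexity of the level sets of the \emph{log-density} rather than of a likelihood-ratio transform) are absent or incorrectly substituted.
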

\begin{proof}
	See Section \ref{section:proof_application}.
\end{proof}

The most interesting regime here is $d\geq 4$ when the entropy integral for the class of $s$-concave densities diverges. Modulo logarithmic factors, the rates of convergence for the $s$-concave MLE $\widehat{p}_n$ in squared Hellinger distance is $\mathcal{O}_{\mathbf{P}}(n^{-2/(d+1)})$, which matches the minimax lower bound for the smaller log-concave (= 0-concave) class, cf. \cite{kim2016global}. During the preparation the paper, the author becomes aware of the very nice work \cite{dagan2019log} which derives, among other things, global risk bounds for the log-concave (i.e. $s=0$) MLEs in the Hellinger distance. The techniques used in both papers in this example share certain common features, while our general setting brings about further technical challenges. See Remark \ref{rmk:s_concave_reduction} below for more technical comments on the proof of Theorem \ref{thm:rate_s_concave}.

The integrability restriction $s>-1/d$ is very natural in this setting: if $s<-1/d$, then there exists a family of $s$-concave densities with singularities so that the MLE does not exist. The following proposition makes this precise.

\begin{proposition}\label{prop:s_concave}
The $s$-concave MLE does not exist for $s<-1/d$.
\end{proposition}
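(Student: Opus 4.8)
The plan is to exhibit, for any $s < -1/d$, an explicit family of $s$-concave densities $\{p_t\}$ on $\R^d$ such that the likelihood $\prod_{i=1}^n p_t(X_i)$ can be driven to $+\infty$ along the family while each $p_t$ remains a bona fide probability density in $\mathcal{P}_s$; this shows the supremum of the likelihood is not attained. The natural construction is to place a spike at one of the data points, say $X_1$. Recall that for $s<0$ an $s$-concave density has the form $p = \varphi_+^{1/s}$ with $\varphi$ convex, and since $1/s<0$, large values of $p$ correspond to small values of $\varphi$. So I would take $\varphi_t(x) = c_t\,( \|x - X_1\| + t )^{\beta}$ for a suitable exponent $\beta>0$ making $x\mapsto (\|x-X_1\|+t)^\beta$ convex (any $\beta\ge 1$ works, since $\|\cdot\|$ is convex and $r\mapsto (r+t)^\beta$ is convex increasing), truncated/normalized so that $p_t = \varphi_t^{1/s}$ integrates to one. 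As $t\downarrow 0$ the density develops a singularity of order $\|x-X_1\|^{\beta/s}$ at $X_1$, and the point is that for $s<-1/d$ one can choose $\beta$ so that $\beta/s \in (-d,0)$, i.e. the singularity is still integrable (so $p_t \in L_1$ and can be normalized), yet $p_t(X_1)\to\infty$.

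The key steps, in order: \textbf{(1)} Fix $s<-1/d$ and choose $\beta \ge 1$ with $-d < \beta/s$ — concretely any $\beta \in [1, -ds)$ works since $-ds > 1$ — and verify that $\varphi_t$ is convex and nonnegative, hence $p_t := (\text{const})\cdot(\|x-X_1\|+t)^{\beta/s}\bm 1_{\|x-X_1\|\le R}$ (with a cutoff radius $R$, also truncated so the formula stays of the form $\varphi_+^{1/s}$) is $s$-concave. \textbf{(2)} Check integrability: $\int_{\|x-X_1\|\le R}(\|x-X_1\|+t)^{\beta/s}\,\d x \asymp \int_0^R (r+t)^{\beta/s} r^{d-1}\,\d r$, which is finite and bounded away from $0$ and $\infty$ uniformly in $t\in(0,1]$ because $\beta/s > -d$; normalize to get a density $p_t\in\mathcal P_s$. \textbf{(3)} Evaluate the likelihood: $p_t(X_1) \asymp t^{\beta/s} \to \infty$ as $t\downarrow 0$ (since $\beta/s<0$), while $p_t(X_i)$ for $i\ge 2$ stays bounded below by a positive constant as long as all data points lie within the support — which we arrange by choosing $R$ large enough (and if some $X_i=X_1$, that only helps). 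Hence $\prod_{i=1}^n p_t(X_i)\to\infty$, so no maximizer exists.

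The main obstacle — really the only subtle point — is making sure the construction stays genuinely inside $\mathcal{P}_s$ after truncation and normalization: truncating to a ball $\|x-X_1\|\le R$ must be done in a way compatible with $s$-concavity, which for $s<0$ means writing $p_t = \varphi_+^{1/s}$ with $\varphi$ convex and setting $\varphi = +\infty$ (equivalently $p_t=0$) outside the ball, so one should phrase the cutoff at the level of $\varphi$ rather than of $p$; one also needs the normalizing constant to be uniformly controlled so that $p_t$ does not degenerate. A secondary point is the edge case where the data are not in general position (repeated values, or the affine hull having dimension $<d$) — but concentrating the spike at a single data point $X_1$ and taking $R$ large handles this, and the case of an affine-degenerate support is anyway where the MLE fails even more obviously. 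Once these bookkeeping items are settled the argument is complete; this also explains, a posteriori, why the threshold is exactly $s=-1/d$: it is precisely the borderline at which a point singularity of an $s$-concave density can be integrable in $\R^d$.
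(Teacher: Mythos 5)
Your construction is correct and is essentially the paper's own argument: both proofs build a family of $s$-concave densities with an integrable point singularity of order $\|x-X_1\|^{\beta/s}$ (integrable precisely because $s<-1/d$ allows $\beta/s>-d$), truncated at the level of the convex function $\varphi$, along which the likelihood diverges at a data point. The only cosmetic difference is that the paper takes $\beta=1$ and moves the singularity's center $a\to X_1$, whereas you fix the center at $X_1$ and regularize with $(\|x-X_1\|+t)^{\beta}$, $t\downarrow 0$; both limits are equally valid.
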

\begin{proof}
For $a\in \R^d, b >0$, let $\widetilde{\varphi}_{a,b}(x)\equiv \pnorm{x-a}{}\bm{1}_{\pnorm{x-a}{}\leq b}+\infty\bm{1}_{\pnorm{x-a}{}>b}$. Since $c_b\equiv \int \widetilde{\varphi}_{a,b}^{1/s} = \int \widetilde{\varphi}_{0,b}^{1/s}<\infty$ for $s<-1/d$, $p_{a,b}\equiv \widetilde{\varphi}_{a,b}^{1/s}/ c_b$ is an $s$-concave density. The log likelihood function for observed $X_1,\ldots,X_n$ is $\ell(a,b)\equiv\log \prod_{i=1}^n p_{a,b}(X_i) =\sum_{i=1}^n \big[(1/s)\log(\pnorm{X_i-a}{}) - \log c_b\big] $ for $(a,b)$ such that $\max_i \pnorm{X_i-a}{}\leq b$ and $X_i\neq a$ for $i=1,\ldots,n$. For $b$ large enough and $a$ approaches any of $X_i$'s, $\ell(a,b)\uparrow \infty$, so the MLE does not exist.
\end{proof}

The univariate case $d=1$ for the above proposition can also be found in \cite{doss2013global}. 

	\begin{remark}\label{rmk:s_concave_reduction}
		The proof of Theorem \ref{thm:rate_s_concave} relies on the following reduction scheme:
		\begin{align}\label{ineq:s_concave_reduction}
		h^2(\widehat{p}_n,p_0)&\lesssim \log n\cdot \E \sup_{C \in \mathscr{C}_d} \abs{(\Prob_n-P_0) (C)}+ \mathcal{O}_{\mathbf{P}}(n^{-1/2}),
		\end{align}
		where $\mathscr{C}_d$ is the class of convex bodies on $\R^d$. 
		
		The above reduction scheme (\ref{ineq:s_concave_reduction}) for $s=0$ is essentially achieved in \cite{carpenter2018near}, but a sharp bound for the expected supremum of the empirical process on the right hand side of the above display is not available therein. Here we show that the reduction (\ref{ineq:s_concave_reduction}) holds for the maximum regime in which the $s$-concave MLE exists. Once (\ref{ineq:s_concave_reduction}) is proven, the expected supremum on its right hand side can be controlled by Theorem \ref{thm:upper_bound_ep_set} combined with a standard technique of `domain extension' (cf. \cite{van1996new} or \cite[Corollary 2.7.4]{van1996weak}) under the envelope control (\ref{ineq:s_concave_0}). See the proof of Theorem \ref{thm:rate_s_concave} for more details. 
	\end{remark}

\section{Proofs for Section \ref{section:ep_sharp_bounds}}\label{section:proof_ep}

We will prove Theorem \ref{thm:upper_bound_ep} in this section. As (2) is a direct consequence of (1), we will prove (1) and (3) only. 

\subsection{Proof of Theorem \ref{thm:upper_bound_ep}-(1)}\label{section:proof_upper_bounds}

A heuristic way of seeing the bound (\ref{ineq:upper_bound_ep_abstract}) is the following. This requires some understanding for the proof of the classical maximal inequality (\ref{ineq:classical_local_maximal_ineq}):
\begin{itemize}
	\item The entropy integral term $\int_{\gamma}^{\sigma} \sqrt{\log \mathcal{N}_{[\,]}(\epsilon, \mathcal{F}, L_2(P)}\,\d{\epsilon}$ in (\ref{ineq:classical_local_maximal_ineq}) comes from $L_2$ chaining with bracketing in the `Gaussian regime' using Bernstein's inequality, starting from $\sigma$ down to $\gamma$. 
	\item The residual term $\sqrt{n}\gamma$ in (\ref{ineq:classical_local_maximal_ineq}) comes from the bound $\pnorm{f}{L_1(P)}\leq \pnorm{f}{L_2(P)}$ towards the end level $\gamma$ of the $L_2$ chaining. 
\end{itemize}
Now we wish to carry out $L_p$ chaining with bracketing for, say, $p \in [1,2]$. Clearly $\pnorm{f}{L_2(P)}\leq \pnorm{f}{L_p(P)}^{p/2}$ and $\pnorm{f}{L_1(P)}\leq \pnorm{f}{L_p(P)}$. This naturally hints the following conjecture: for $\sigma>0$ not `too small',
\begin{align*}
\E \sup_{f \in \mathcal{F}(\sigma)}\abs{\G_n (f)} &\lesssim \inf_{0\leq \gamma\leq \sigma/2} \bigg\{\sqrt{n}\gamma+ \int_{\gamma^{p/2}}^{\sigma^{p/2}} \sqrt{\log \mathcal{N}_{[\,]}(\epsilon^{2/p},\mathcal{F},L_p(P))}\,\d{\epsilon}\bigg\}\\
&\asymp \inf_{0\leq \gamma\leq \sigma/2} \bigg\{\sqrt{n}\gamma+ \int_{\gamma}^{\sigma} u^{p/2-1} \sqrt{\log \mathcal{N}_{[\,]}(u,\mathcal{F},L_p(P))}\,\d{u}\bigg\}\\
&\lesssim \inf_{0\leq \gamma\leq \sigma/2}\bigg\{\sqrt{n}\gamma+ \int_{\gamma }^{\sigma } u^{(p-\alpha)/2-1}\d{u}\bigg\}\\
&\sim \inf_{0\leq \gamma\leq \sigma/2}\bigg\{\sqrt{n}\gamma+\biggabs{u^{(p-\alpha)/2}\Big\lvert_{u=\gamma}^{u=\sigma} }\bigg\}\\
&\sim 
\begin{cases}
\sigma^{\frac{p-\alpha}{2}}, &\alpha<p\\
\inf_{0\leq \gamma\leq \sigma/2}\big\{\sqrt{n}\gamma+\gamma^{-(\alpha-p)/2} \big\}, &\alpha>p
\end{cases}\\
&\sim
\begin{cases}
\sigma^{\frac{p-\alpha}{2}}, &\alpha<p\\
n^{\frac{\alpha-p}{2(\alpha+2-p)}}, &\alpha>p
\end{cases}
.
\end{align*}
Below we implement this heuristic program rigorously and identify the regime of $\sigma>0$ in which the above bound holds.

\begin{proof}[Proof of Theorem \ref{thm:upper_bound_ep}-(1)]
	The proof is inspired by the proof of \cite[Lemma 2.14.3]{van1996weak} that is originated in \cite{ossiander1987central}. We use the same notation for convenience of the readers. Without loss of generality we assume $\sigma = 2^{-q_0}$ for some $q_0 \in \N$. By the assumption, there exist nested partitions $\{\mathcal{F} = \cup_{i=1}^{N_q} \mathcal{F}_{q,i}\}_{q=q_0}^\infty$ such that for all $q\geq q_0$, (i) $\max_{1\leq i\leq N_q}\pnorm{ \sup_{f,g \in \mathcal{F}_{q,i}} \abs{f-g} }{L_p}\leq 2^{-q}$, and (ii) $\log N_{q} \lesssim_{L,\alpha} 2^{q\alpha}$. 	[Such nested partitions can be constructed as follows. First taking un-nested partitions $\{\mathcal{F} = \cup_{i=1}^{\bar{N}_q} \bar{\mathcal{F}}_{q,i}\}_{q=q_0}^\infty$ with $\bar{N}_q \leq \mathcal{N}_{[\,]}(2^{-q},\mathcal{F},L_p)\leq \exp(L\cdot 2^{q\alpha} )$. Then at level $q$,  we use the partition consisting of all intersections of form $\{\cap_{r=q_0}^q \bar{\mathcal{F}}_{r,i_r}: 1\leq i_r\leq \bar{N}_r, q_0\leq r\leq q\}$. The first property above is obvious. The second one follows as $N_q\leq \prod_{r=q_0}^q \bar{N}_r$.] 
	
	Pick any $f_{q,i} \in \mathcal{F}_{q,i}$. For any $f \in \mathcal{F}$, let $\pi_q f \equiv f_{q,i}$ and $\Delta_q f\equiv \sup_{f,g \in \mathcal{F}_{q,i}} \abs{f-g}$ if $f \in \mathcal{F}_{q,i}$. Let $a_q \equiv  2^{-q\{(p\wedge 2)+\alpha\}/2}$.  Now define the indicator functions $
	A_{q-1} f \equiv \bm{1}_{\Delta_{q_0} f\leq \sqrt{n} a_{q_0},\ldots, \Delta_{q-1} f\leq \sqrt{n} a_{q-1} }$, $B_{q} f\equiv \bm{1}_{\Delta_{q_0} f\leq \sqrt{n} a_{q_0},\ldots, \Delta_{q-1} f\leq \sqrt{n} a_{q-1}, \Delta_q f>\sqrt{n}a_q}$, and $
	B_{q_0} f \equiv \bm{1}_{\Delta_{q_0}f>\sqrt{n}a_{q_0}}$. In words, $A_{q-1}f$ indicates the region for which `Gaussian estimates' are valid up to level $q-1$ for $f$, while $B_q$ indicates the region for which the `Gaussian estimate' is first violated at level $q$ for $f$.

	By the last display in \cite[pp. 241]{van1996weak}, the following chaining holds for any $q_1>q_0$:
	\begin{align}\label{ineq:local_maximal_ineq_bracketing_1}
	f-\pi_{q_0} f &= (f-\pi_{q_0}f) B_{q_0} f + \sum_{q=q_0+1}^{q_1} (f-\pi_q f) B_q f \nonumber\\
	&\quad \quad + \sum_{q=q_0+1}^{q_1} (\pi_q f-\pi_{q-1}f) A_{q-1}f + (f-\pi_{q_1}f) A_{q_1}f. 
	\end{align}
	We bound the expectation of the above four terms when applied with the empirical process, and name them $(I)$-$(IV)$. Roughly speaking, $(III)$ is the term with `Gaussian behavior' due to the construction of $A_{q-1}f$, while for the terms $(I)$-$(II)$, the Gaussian estimate fails on $B_q f$ at level $q$. This will be compensated by the observation that the Gaussian estimate still holds at level $q-1$. The single term $(IV)$ terminates the chaining and will be handled via an $L_p$ control. Below we implement this rough idea precisely. 
	
	For $(I)$ and $(II)$, note that
	\begin{align}\label{ineq:local_maximal_ineq_bracketing_2}
	&\E \sup_{f \in \mathcal{F}} \abs{\G_n(f-\pi_{q}f)B_{q}f} \nonumber\\
	&\leq \E \sup_{f \in \mathcal{F}}  \sqrt{n}\Prob_n \abs{f-\pi_{q} f} B_{q} f+ \sqrt{n}\sup_{f \in \mathcal{F}} P\abs{f-\pi_{q} f}B_{q} f \nonumber\\
	&\leq \E \sup_{f \in \mathcal{F}}  \sqrt{n}\Prob_n \Delta_{q} f B_{q} f + \sqrt{n} \sup_{f \in \mathcal{F}} P \Delta_{q} f B_{q} f \nonumber\\
	&\leq \E \sup_{f \in \mathcal{F}}  \abs{\G_n (\Delta_{q} f B_{q} f) } +2 \sqrt{n} \sup_{f \in \mathcal{F}} P \Delta_{q} f B_{q} f.
	\end{align}
	Hence
	\begin{align}\label{ineq:local_maximal_ineq_bracketing_3}
	(I)+(II) &=\sum_{q=q_0}^{q_1} \E \sup_{f \in \mathcal{F}} \abs{\G_n(f-\pi_{q}f)B_{q}f}\nonumber \\
	&\leq \sum_{q=q_0}^{q_1} \E \sup_{f \in \mathcal{F}} \abs{\G_n (\Delta_q f B_q f)} + 2\sqrt{n} \sum_{q=q_0}^{q_1} \sup_{f \in \mathcal{F}} P \Delta_q f B_q f.
	\end{align}
	For $q=q_0$, we use the trivial bound $\Delta_{q_0} f B_{q_0} f\leq 1$. For $q\geq q_0+1$, we will however implement control at level $q-1$: As the partitions are nested, $\Delta_q f B_q f\leq \Delta_{q-1} f B_q f \leq \sqrt{n} a_{q-1}$ for all $q\geq q_0+1$. In summary, for $q\geq q_0$,
	\begin{align*}
	\Delta_{q_0} f B_{q_0} f \leq 1\wedge \sqrt{n} a_{q-1}.
	\end{align*}
	On the other hand,
	\begin{align*}
	\sup_{f \in \mathcal{F}} P(\Delta_{q}f B_{q}f)^2\leq \sup_{f \in \mathcal{F}} \big( P(\Delta_{q}f B_{q}f)^p\big)^{1\wedge (2/p)} \leq 2^{-q (p\wedge 2)}.
	\end{align*}
	By Bernstein inequality, for any $q\geq q_0$,
	\begin{align}\label{ineq:local_maximal_ineq_bracketing_4}
	&\E \sup_{f \in \mathcal{F}} \abs{\G_n (\Delta_q f B_q f)} \nonumber \\
	&\lesssim 2^{-q(p\wedge 2)/2} \sqrt{\log N_q} +n^{-1/2} (1\wedge \sqrt{n} a_{q-1}) \log N_q \nonumber\\
	& \lesssim 2^{-q \{ (p\wedge 2)-\alpha\}/2 } + (n^{-1/2}\wedge 2^{-q\{(p\wedge 2)+\alpha\}/2 })\cdot  2^{q\alpha} \asymp 2^{-q \{ (p\wedge 2)-\alpha\}/2 }.
	\end{align}
	Furthermore,
	\begin{align}\label{ineq:local_maximal_ineq_bracketing_5}
	P \Delta_{q} f B_{q} f &\leq P\Delta_{q} f \bm{1}_{ \Delta_{q} f>\sqrt{n} a_{q} }\leq  (\sqrt{n} a_{q})^{-1} P(\Delta_{q}f)^2 \nonumber \\
	&\leq n^{-1/2} a_{q}^{-1} 2^{-q(p\wedge 2)} = n^{-1/2} 2^{-q\{(p\wedge 2)-\alpha\}/2}. 
	\end{align}
	Combining (\ref{ineq:local_maximal_ineq_bracketing_3})-(\ref{ineq:local_maximal_ineq_bracketing_5}), we have
	\begin{align}\label{ineq:local_maximal_ineq_bracketing_6}
	(I)+(II)&\lesssim \sum_{q=q_0}^{q_1}  2^{-q\{(p\wedge 2)-\alpha\}/2} \nonumber\\
	&\asymp_{p,\alpha} 2^{-q_0 \{(p\wedge 2)-\alpha\}/2}\bm{1}_{\alpha <p\wedge 2}+ 2^{q_1\{\alpha-(p\wedge 2)\}/2} \bm{1}_{\alpha>p\wedge 2}. 
	\end{align}
	For $(III)$, we have Gaussian estimates as follows. Note that 
	\begin{align*}
	\abs{\pi_q f-\pi_{q-1}f} A_{q-1} f &\leq 1\wedge \Delta_{q-1} f A_{q-1} f\leq 1\wedge \sqrt{n} a_{q-1},\\
	P \big(\abs{\pi_q f-\pi_{q-1}f} A_{q-1} f\big)^2 &\leq P(\Delta_{q-1} f)^2 \leq 2^{-(q-1)(p\wedge 2)}.
	\end{align*}
	As the cardinality of $\{(\pi_q f-\pi_{q-1}f) A_{q-1} f: f \in \mathcal{F}\}$ is at most $N_qN_{q-1}\leq N_q^2$, by Bernstein inequality and a similar argument to (\ref{ineq:local_maximal_ineq_bracketing_4}), 
	\begin{align}\label{ineq:local_maximal_ineq_bracketing_7}
	(III) &=\sum_{q=q_0+1}^{q_1} \E \sup_{f \in \mathcal{F}} \abs{\G_n\big((\pi_q f-\pi_{q-1}f)A_{q-1}f\big)} \nonumber \\
	&\lesssim \sum_{q=q_0+1}^{q_1} 2^{-q \{ (p\wedge 2)-\alpha\}/2 } \nonumber\\
	&\asymp_{p,\alpha} 2^{-q_0 \{(p\wedge 2)-\alpha\}/2}\bm{1}_{\alpha <p/2}+ 2^{q_1\{\alpha-(p\wedge 2)\}/2} \bm{1}_{\alpha>p\wedge 2}. 
	\end{align}
	For $(IV)$, using similar arguments as in (\ref{ineq:local_maximal_ineq_bracketing_2}),
	\begin{align}\label{ineq:local_maximal_ineq_bracketing_8}
	(IV)& = \E \sup_{f \in \mathcal{F}} \abs{\G_n(f-\pi_{q_1}f)A_{q_1}f} \nonumber \\
	&\leq \E \sup_{f \in \mathcal{F}}  \abs{\G_n (\Delta_{q_1} f A_{q_1} f) } +2 \sqrt{n} \sup_{f \in \mathcal{F}} P \Delta_{q_1} f A_{q_1} f \nonumber \\
	&\lesssim 2^{-q_1 \{(p\wedge 2)-\alpha\}/2} + \sqrt{n} 2^{-q_1}. 
	\end{align}
	Here in the last inequality we used $P \Delta_{q_1} f A_{q_1} f \leq \pnorm{\Delta_{q_1} f A_{q_1} f }{L_p}\leq 2^{-q_1}$. Finally, using Bernstein's inequality again,
	\begin{align}\label{ineq:local_maximal_ineq_bracketing_9}
	\E \sup_{f \in \mathcal{F}} \abs{\G_n (\pi_{q_0} f)} &\lesssim 2^{-q_0 \{ (p\wedge 2)-\alpha\}/2}+ n^{-1/2} 2^{q_0\alpha}.
	\end{align}
	Combining  (\ref{ineq:local_maximal_ineq_bracketing_1}), (\ref{ineq:local_maximal_ineq_bracketing_6})-(\ref{ineq:local_maximal_ineq_bracketing_9}), we obtain
	\begin{align*}
	\E \sup_{f \in \mathcal{F}} \abs{\G_n(f)}
	& \lesssim 
	\begin{cases}
	2^{-q_0 \{ (p\wedge 2)-\alpha\}/2}+ n^{-1/2} 2^{q_0\alpha} + \sqrt{n} 2^{-q_1}, & \alpha <p\wedge 2,\\
	2^{q_1 \{ \alpha-(p\wedge 2)\}/2}+ n^{-1/2} 2^{q_0\alpha}  + \sqrt{n} 2^{-q_1}, & \alpha>p\wedge 2.
	\end{cases}
	\end{align*}
	The $\lesssim$ does not depend on $q_1$. Now optimize over $q_1>q_0$ to conclude. 
\end{proof}

\begin{remark}
\cite{pollard2002maximal} suggested the following key `recursive equality' (cf. \cite[pp. 1043, last display]{pollard2002maximal})
\begin{align}\label{ineq:pollard_recursive}
R_i T_i = R_{i+1}T_{i+1}-R_{i+1}T_i^cT_{i+1}+(R_i-R_{i+1})T_iT_{i+1}+R_iT_iT_{i+1}^c
\end{align}
for the chaining method of \cite{ossiander1987central}. See \cite{pollard2002maximal} for definitions of the above notation. Interestingly, (\ref{ineq:pollard_recursive}) can also be viewed as a one-step chaining of (\ref{ineq:local_maximal_ineq_bracketing_1}) from $i$ to $i+1$, as we may regard $f-\pi_i f = R_i$, $A_i f = T_i$, $A_{i+1} f = T_i T_{i+1}$ and $B_{i+1} f= T_i T_{i+1}^c$ by translating the notation used here to those in \cite{pollard2002maximal}. Some elementary algebra then reduces (\ref{ineq:local_maximal_ineq_bracketing_1}) to (\ref{ineq:pollard_recursive}).
\end{remark}

\subsection{Proof of Theorem \ref{thm:upper_bound_ep}-(3) }

The proof of (\ref{ineq:lower_bound_ep}) is divided into two steps:
\begin{enumerate}
	\item First, we establish a lower bound for the \emph{Gaussianized} empirical process; see Proposition \ref{prop:lower_bound_ep_set}. This can be done roughly via Sudakov minimization in the Gaussian regime, i.e., when $\sigma$ is not too small.
	\item Second, we will control the Gaussianized empirical process by the standard symmetrized empirical process from above. This step requires several subtle estimates as a naive bound would incur additional undesirable logarithmic factors. This is done via the help of a multiplier inequality derived in \cite{han2017sharp} (cf. Appendix \ref{section:tools}). Roughly speaking, the logarithmic factors can be removed for the Gaussianized empirical process at sample size $n$ as long as the empirical process has size no smaller than Gaussian maxima along the `entire path' from $1$ to $n$. Details see the proofs of Propositions \ref{prop:lower_bound_ep_set_donsker} and \ref{prop:lower_bound_ep_set_nolog}.
\end{enumerate}

We first prove the lower bound for Gaussianized empirical process.
\begin{proposition}\label{prop:lower_bound_ep_set}
	Let $\mathcal{F}\subset L_\infty(1)$. For any $\sigma \geq 50n^{-1/2}$ such that 
	\begin{align*}
	\log \mathcal{N}(\sigma/4,\mathcal{F}(\sigma),L_2(P))\leq n\sigma^2/4000,
	\end{align*} 
	we have
	\begin{align*}
	\E \sup_{f \in \mathcal{\mathcal{F}}(\sigma)} \biggabs{\frac{1}{\sqrt{n}}\sum_{i=1}^n g_i f(X_i) }\gtrsim \sigma \sqrt{\log \mathcal{N}(\sigma/2,\mathcal{F}(\sigma), L_2(P))}.
	\end{align*}
	Here $g_1,\ldots,g_n$ are i.i.d. $\mathcal{N}(0,1)$.
\end{proposition}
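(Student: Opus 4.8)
The plan is to condition on $X_1,\ldots,X_n$, run a Sudakov minorization argument for the resulting conditionally Gaussian process, and beforehand transfer the population separation of $\mathcal{F}(\sigma)$ to empirical separation.

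\textbf{Step 1 (a separated set).} Fix a \emph{maximal} $(\sigma/2)$-separated subset $S=\{f_1,\ldots,f_N\}$ of $\mathcal{F}(\sigma)$ in the $L_2(P)$ metric. Since a maximal separated set is automatically a net, $N\geq \mathcal{N}(\sigma/2,\mathcal{F}(\sigma),L_2(P))$, and since its points are $(\sigma/2)$-separated its cardinality is at most the packing number, hence $N\leq \mathcal{N}(\sigma/4,\mathcal{F}(\sigma),L_2(P))$; the hypothesis then gives $\log N\leq n\sigma^2/4000$.

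\textbf{Step 2 (transfer to empirical distances).} For $i\neq j$ put $h=f_i-f_j$, so $\pnorm{h}{\infty}\leq 2$ (as $\mathcal F\subset L_\infty(1)$) and $Ph^2\geq \sigma^2/4$. Applying Bernstein's inequality to the bounded variables $h^2(X_k)$ (using $Ph^4\leq 4Ph^2$ and $\pnorm{h^2}{\infty}\leq 4$) yields $\Prob\big(\Prob_n h^2\leq \tfrac12 Ph^2\big)\leq \exp(-3n\sigma^2/448)$. A union bound over the at most $N^2$ pairs costs $\exp(2\log N)\leq \exp(n\sigma^2/2000)$, and since $1/2000$ is strictly below the Bernstein exponent constant $3/448$, the union bound still gives a probability at most $\exp(-c n\sigma^2)$ with an absolute $c>0$; because $\sigma\geq 50n^{-1/2}$ forces $n\sigma^2\geq 2500$, this is at most $1/2$. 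Thus on an event $E$ with $\Prob(E)\geq 1/2$ we have $\pnorm{f_i-f_j}{\Prob_n,2}\geq \sigma/(2\sqrt2)$ for all $i\neq j$.

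\textbf{Step 3 (conditional Sudakov and integration).} Conditionally on $X_1,\ldots,X_n$, the process $f\mapsto n^{-1/2}\sum_i g_i f(X_i)$ is centered Gaussian with intrinsic metric exactly $\pnorm{\cdot}{\Prob_n,2}$. On $E$ the points $f_1,\ldots,f_N\in\mathcal{F}(\sigma)$ are $\sigma/(2\sqrt2)$-separated in this metric, so Sudakov's minorization inequality (see, e.g., \cite{van1996weak,gine2015mathematical}), applied to this finite subfamily, gives
\begin{align*}
\E_g\sup_{f\in\mathcal{F}(\sigma)}\biggabs{\frac{1}{\sqrt n}\sum_{i=1}^n g_i f(X_i)}\ \geq\ \E_g\max_{1\leq j\leq N}\biggabs{\frac{1}{\sqrt n}\sum_{i=1}^n g_i f_j(X_i)}\ \gtrsim\ \sigma\sqrt{\log N}.
\end{align*}
Taking expectations over $X$, using $\Prob(E)\geq 1/2$ and $\log N\geq \log\mathcal{N}(\sigma/2,\mathcal{F}(\sigma),L_2(P))$, produces the asserted lower bound.

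\textbf{Main obstacle.} The delicate point is Step 2: the union bound ranges over up to $\mathcal{N}(\sigma/4,\mathcal{F}(\sigma),L_2(P))^2$ pairs, which is exponentially large in $n\sigma^2$, so the per-pair Bernstein deviation must decay at a strictly faster exponential rate. This is precisely what the hypothesis $\log\mathcal{N}(\sigma/4,\mathcal{F}(\sigma),L_2(P))\leq n\sigma^2/4000$ (with its deliberately small constant) together with $\sigma\geq 50n^{-1/2}$ are engineered to guarantee; carefully bookkeeping the numerical constants so that the exponents line up is the only genuine work, the rest being a standard conditional Sudakov argument.
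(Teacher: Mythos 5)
Your proposal is correct and follows essentially the same route as the paper's proof: a maximal $(\sigma/2)$-separated set in $L_2(P)$ whose pairwise separation is transferred to $L_2(\Prob_n)$ via Bernstein's inequality and a union bound (the hypotheses $\log\mathcal{N}(\sigma/4,\mathcal{F}(\sigma),L_2(P))\leq n\sigma^2/4000$ and $\sigma\geq 50n^{-1/2}$ serving exactly to make the exponents line up), followed by conditional Sudakov minorization and integration over the $X_i$'s. The only differences are cosmetic bookkeeping (multiplicative versus additive form of the Bernstein deviation, and the resulting separation constant $\sigma/(2\sqrt2)$ versus the paper's $\sigma/5$).
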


\begin{proof}[Proof of Proposition \ref{prop:lower_bound_ep_set}]
	By Sudakov minorization (cf. Lemma \ref{lem:sudakov_minor}), for any $\sigma>0$,
	\begin{align}\label{ineq:lower_bound_sudakov_1}
	\E \sup_{f \in \mathcal{F}(\sigma)} \biggabs{\frac{1}{\sqrt{n}}\sum_{i=1}^n g_i f(X_i) }\gtrsim \E \sigma \sqrt{\log \mathcal{N}(\sigma/10,\mathcal{F}(\sigma), L_2(\Prob_n))}.
	\end{align}
	We claim that for any $\sigma>0$ such that $\log \mathcal{N}(\sigma/4,\mathcal{F}(\sigma),L_2(P))\leq n\sigma^2/4000$,
	\begin{align}\label{ineq:lower_bound_sudakov_2}
	\Prob\left(\mathcal{N}(\sigma/10,\mathcal{F}(\sigma), L_2(\Prob_n))\geq \mathcal{N}(\sigma/2,\mathcal{F}(\sigma), L_2(P))\right)\geq 1-e^{-n\sigma^2/2000}.
	\end{align}
	To see this, let $f_1,\ldots,f_N$ be a maximal $\sigma/2$-packing set of $\mathcal{F}(\sigma)$ in the $L_2(P)$ metric, i.e. for $i\neq j$, $P(f_i-f_j)^2\geq \sigma^2/4$. Since $P(f_i-f_j)^4\leq 4P(f_i-f_j)^2\leq 16\sigma^2$, we apply Bernstein's inequality followed by a union bound to see that with probability at least $1-N^2 \exp(-t)$,
	\begin{align*}
	\max_{1\leq i\neq j\leq N} \bigg(nP(f_i-f_j)^2-\sum_{k=1}^n(f_i-f_j)^2(X_k) \bigg)\leq \frac{2t}{3}+\sqrt{32tn\sigma^2}.
	\end{align*}
	With $t=cn\sigma^2$ for a constant $c>0$ to be specified below, we obtain
	\begin{align*}
	&\Prob\bigg(\min_{1\leq i\neq j\leq N} \frac{1}{n}\sum_{k=1}^n(f_i-f_j)^2(X_k) \geq \sigma^2\left(1/4-2c/3-\sqrt{32c}\right) \bigg)\\
	&\geq 1-e^{2\log \mathcal{D}(\sigma/2,\mathcal{F}(\sigma),L_2(P))-cn\sigma^2}\geq 1-e^{2\log \mathcal{N}(\sigma/4,\mathcal{F}(\sigma),L_2(P))-cn\sigma^2},
	\end{align*}
	where $\mathcal{D}(\cdot,\cdot,\cdot)$ stands for the packing number. By choosing $c=1/10^3$ and $\log \mathcal{N}(\sigma/4,\mathcal{F}(\sigma),L_2(P))\leq n\sigma^2/4000$, we have
	\begin{align*}
	\Prob\bigg(\min_{1\leq i\neq j\leq N} \frac{1}{n}\sum_{k=1}^n(f_i-f_j)^2(X_k) \geq 0.04\sigma^2\bigg)\geq 1-\exp(-n\sigma^2/2000).
	\end{align*}
	This entails that $
	\mathcal{D}(\sigma/5, \mathcal{F}(\sigma),L_2(\Prob_n))\geq N\equiv \mathcal{D}(\sigma/2,\mathcal{F}(\sigma),L_2(P))$ with the above probability. 
	Hence for any $\sigma>0$ such that $\log \mathcal{N}(\sigma/4,\mathcal{F}(\sigma),L_2(P))\leq n\sigma^2/4000$, with probability at least $1-e^{-n\sigma^2/2000}$,
	\begin{align*}
	\mathcal{N}(\sigma/10, \mathcal{F}(\sigma),L_2(\Prob_n))&\geq \mathcal{D}(\sigma/5, \mathcal{F}(\sigma),L_2(\Prob_n))\\
	&\geq \mathcal{D}(\sigma/2,\mathcal{F}(\sigma),L_2(P))\geq \mathcal{N}(\sigma/2,\mathcal{F}(\sigma),L_2(P)),
	\end{align*}
	completing the proof of (\ref{ineq:lower_bound_sudakov_2}). Hence for any $\sigma \geq 50n^{-1/2}$ such that the entropy $\log \mathcal{N}(\sigma/4,\mathcal{F}(\sigma),L_2(P))\leq n\sigma^2/4000$, the claim of the proposition follows from (\ref{ineq:lower_bound_sudakov_1}) and (\ref{ineq:lower_bound_sudakov_2}).
\end{proof}

Next we eliminate the effect of the Gaussian multiplier. We need a technical lemma.

\begin{lemma}\label{lem:Gaussian_order_stat}
	Let $g_1,\ldots,g_n$ be i.i.d. $\mathcal{N}(0,1)$, and $\abs{g_{(n)}}\leq \ldots\leq \abs{g_{(1)}}$ be reversed order statistics of $\{\abs{g_1},\ldots,\abs{g_n}\}$. Then there exists an absolute constant $K>0$ such that for any $c \in (0,K^{-1})$, and $0\leq t\leq K^{-1} \sqrt{\log(1/c)}$, we have $
	\Prob\big(\abs{g_{(\floor{cn})}}\leq t \big)\leq e^{-c^2 n /K}$. 
\end{lemma}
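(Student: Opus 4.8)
The plan is to reduce the claim about the order statistic $|g_{(\lfloor cn\rfloor)}|$ to a lower-tail bound for a Binomial random variable, and then to check that the hypothesis $t\le K^{-1}\sqrt{\log(1/c)}$ is exactly what makes the Binomial success probability large compared to $c$. \emph{Reduction.} For any $t>0$, if $m:=\#\{i\le n:|g_i|>t\}$ then $|g_{(1)}|,\dots,|g_{(m)}|>t\ge|g_{(m+1)}|,\dots,|g_{(n)}|$, so the event $\{|g_{(\lfloor cn\rfloor)}|\le t\}$ coincides with $\{m<\lfloor cn\rfloor\}$. Writing $N_t:=\#\{i\le n:|g_i|>t\}\sim\mathrm{Bin}(n,p_t)$ with $p_t:=\Prob(|Z|>t)$ for $Z\sim\mathcal N(0,1)$, it therefore suffices to show $\Prob(N_t<\lfloor cn\rfloor)\le e^{-c^2n/K}$ under the stated constraints. (If $\lfloor cn\rfloor=0$ this probability is $0$ and there is nothing to prove, so assume $\lfloor cn\rfloor\ge1$.)

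\emph{Lower bound on $p_t$.} Here I would use a crude absolute Gaussian tail estimate $p_t=\Prob(|Z|>t)\ge c_0\,e^{-t^2}$, valid for all $t\ge0$ (split at $t=1$: bound $\Prob(|Z|>t)\ge\Prob(|Z|>1)$ for $t\le1$, and use the classical inequality $\Prob(Z>t)\ge(2\pi)^{-1/2}\tfrac{t}{t^2+1}e^{-t^2/2}$ for $t\ge1$). Since $t^2\le K^{-2}\log(1/c)$, this gives $p_t\ge c_0\,c^{1/K^2}$, hence $p_t/c\ge c_0\,(1/c)^{1-1/K^2}$. Because $c<K^{-1}$ means $1/c>K$, for $K\ge2$ we have $(1/c)^{1-1/K^2}\ge K^{1-1/K^2}\ge\sqrt K$, so $p_t\ge c_0\sqrt K\,c\ge 2c$ once $K\ge(2/c_0)^2$.

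\emph{Chernoff step.} With $p_t\ge2c$ in hand, $\lfloor cn\rfloor\le cn\le\tfrac12 np_t=\tfrac12\mu$ where $\mu:=\E N_t=np_t$, so the multiplicative Chernoff bound for the lower tail of a Binomial gives $\Prob(N_t<\lfloor cn\rfloor)\le\Prob(N_t\le\mu/2)\le e^{-\mu/8}\le e^{-cn/4}$. Since $c\in(0,1)$ implies $c\ge c^2$, this is $\le e^{-c^2n/4}\le e^{-c^2n/K}$ as soon as $K\ge4$. Taking $K$ larger than the finitely many absolute thresholds collected above completes the proof.

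The only genuinely delicate point is the middle step: one must pin down a single absolute constant $K$ for which the hypothesis $t\le K^{-1}\sqrt{\log(1/c)}$ forces $p_t\ge 2c$ uniformly over all $c\in(0,K^{-1})$. The minor nuisance is that the Gaussian tail lower bound $c_0 e^{-t^2}$ degrades as $t\to0$ if one uses only the large-$t$ asymptotics, which is why the trivial case split at $t=1$ is convenient; past that, the argument is a routine Chernoff computation.
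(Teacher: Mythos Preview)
Your proof is correct and follows essentially the same approach as the paper: both reduce $\{|g_{(\lfloor cn\rfloor)}|\le t\}$ to a lower-tail event for the Binomial count $\sum_i\bm{1}_{|g_i|>t}$, verify that the hypothesis on $t$ forces $p_t=\Prob(|g_1|>t)\ge 2c$, and then apply a concentration inequality. The only cosmetic difference is that the paper invokes Bernstein's inequality to get the exponent $(\phi(t)-c)^2 n/(2\phi(t)+\tfrac{4}{3}(\phi(t)-c))\gtrsim c^2 n$, whereas you use the multiplicative Chernoff bound $\Prob(N_t\le\mu/2)\le e^{-\mu/8}$; for Bernoulli sums these are interchangeable here.
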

\begin{proof}
	For notational convenience, we assume that $c n \in \N$. Let $\phi(t) \equiv \Prob(\abs{g_1}>t)$. By \cite[(2.23)]{gine2015mathematical}, $\sqrt{2/\pi}\cdot \frac{t}{t^2+1} e^{-t^2/2}\leq \phi(t)\leq \min\{1, \sqrt{2/\pi}\cdot t^{-1}\}e^{-t^2/2}$. Let $t_c>0$ be such that $\phi(t_c)=2c$. Then $t_c\leq \sqrt{2\log(1/2c)}$. By Bernstein's inequality, for $0\leq t\leq t_c$, $2c \leq \phi(t)\leq 1$, so
	\begin{align*}
	\Prob\big(\abs{g_{(cn)}}\leq t\big) &\leq \Prob\bigg(\sum_{i=1}^n \bm{1}_{\abs{g_i}>t}\leq cn\bigg) \\
	&= \Prob\bigg(\sum_{i=1}^n \big(\bm{1}_{\abs{g_i}>t}-\phi(t)\big)\leq-(\phi(t)-c)n\bigg)\\
	&\leq \exp\bigg(-\frac{(\phi(t)-c)^2n^2 }{2n\phi(t)+4 (\phi(t)-c)n/3}\bigg)\leq e^{- c^2 n/K},
	\end{align*}
	proving the claim. 
\end{proof}

\begin{proposition}\label{prop:lower_bound_ep_set_donsker}
	Let the conditions in Theorem \ref{thm:upper_bound_ep}-(3) hold for some $\alpha\in (0,1)$ and $L>0$ large enough. Then for $\sigma_n^2\geq c n^{-1/(\alpha+1)}$ with some constant $c>0$, $
	\E \sup_{f \in \mathcal{F}(\sigma_n)}\abs{\G_n(f)}\gtrsim_{\alpha} \sigma_n^{1-\alpha}$.
\end{proposition}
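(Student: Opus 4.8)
The plan is to combine the Gaussian lower bound of Proposition \ref{prop:lower_bound_ep_set} with a comparison between the symmetrized Rademacher process and the Gaussianized process, using the multiplier inequality from \cite{han2017sharp}. First, I would note that since $\alpha\in(0,1)$ and the entropy estimate (\ref{ineq:thm_lower_bound_1}) gives $\log\mathcal{N}(\sigma/4,\mathcal{F}(\sigma),L_2(P))\gtrsim \sigma^{-2\alpha}$, and combined with the matching upper bound (derived from (\ref{ineq:upper_bound_ep_1}) and $\mathcal{F}\subset L_\infty(1)$), we have $\log\mathcal{N}(\sigma/4,\mathcal{F}(\sigma),L_2(P))\asymp \sigma^{-2\alpha}$. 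For $\sigma_n^2\geq c n^{-1/(\alpha+1)}$, the quantity $n\sigma_n^2$ dominates $\sigma_n^{-2\alpha}$ (this is exactly the threshold $\sigma^{2(\alpha+1)}\gtrsim n^{-1}$), so for $c$ large enough the hypothesis $\log\mathcal{N}(\sigma_n/4,\mathcal{F}(\sigma_n),L_2(P))\leq n\sigma_n^2/4000$ of Proposition \ref{prop:lower_bound_ep_set} is satisfied, and $\sigma_n\geq 50n^{-1/2}$ also holds. Therefore Proposition \ref{prop:lower_bound_ep_set} yields
\[
\E \sup_{f\in\mathcal{F}(\sigma_n)} \biggabs{\frac{1}{\sqrt n}\sum_{i=1}^n g_i f(X_i)}\gtrsim \sigma_n\sqrt{\log\mathcal{N}(\sigma_n/2,\mathcal{F}(\sigma_n),L_2(P))}\gtrsim \sigma_n^{1-\alpha}.
\]

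The second step is to pass from the Gaussian multiplier process back to the Rademacher (symmetrized) empirical process, and then to $\E\sup\abs{\G_n(f)}$ via the standard symmetrization inequality. The naive route — conditioning on the $g_i$, replacing each $g_i$ by $\E\abs{g_i}\cdot\epsilon_i$ — loses a factor because the largest $\abs{g_i}$ is of order $\sqrt{\log n}$; one cannot simply bound the Gaussian process by $\sqrt{\log n}$ times the Rademacher process without potentially creating a spurious logarithm. The fix, following \cite{han2017sharp}, is to use the multiplier inequality which controls the Gaussianized process at sample size $n$ by a weighted sum of Rademacher process suprema at sample sizes $1,\dots,n$ along the path; combined with Lemma \ref{lem:Gaussian_order_stat} controlling the Gaussian order statistics, the contribution of the large multipliers is offset provided the Rademacher process at every intermediate sample size $k$ is at least of the order of the Gaussian maxima at scale $k$. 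Concretely, one needs an \emph{upper} bound of the form $\E\sup_{f\in\mathcal{F}(\sigma)}\abs{\G_k(f)}\lesssim \sigma^{1-\alpha}\vee k^{(\alpha-1)/2(\alpha+1)}$ uniformly in $k\leq n$ — which for $\alpha<1$ simplifies, and which follows from Theorem \ref{thm:upper_bound_ep}-(2) applied at each sample size $k$ — to feed into the multiplier inequality and conclude that the Rademacher process at sample size $n$ is $\gtrsim \sigma_n^{1-\alpha}$, hence so is $\E\sup_{f\in\mathcal{F}(\sigma_n)}\abs{\G_n(f)}$ by symmetrization.

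The main obstacle I anticipate is the bookkeeping in the second step: verifying that the weighted-sum bound produced by the multiplier inequality of \cite{han2017sharp}, when the intermediate-$k$ Rademacher suprema are plugged in, actually sums to something of order $\sqrt{n}\,\sigma_n^{1-\alpha}$ and does not accumulate an extra logarithmic factor. This requires checking that the Gaussian maxima term $\sqrt{\log k}$ is genuinely dominated — across the whole range of $k$ from a constant up to $n$ — by the empirical process size at scale $k$, i.e. that $\sigma_n^{1-\alpha}\sqrt{k/n}\gtrsim$ (the relevant Gaussian contribution at level $k$), which is where the lower restriction $\sigma_n^2\gtrsim n^{-1/(\alpha+1)}$ is used a second time. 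The Donsker assumption $\alpha<1$ makes this clean because the empirical process size stays bounded (no diverging term), so the comparison is essentially between a constant-order quantity and $\sqrt{\log k}/\sqrt{k}$, which is summable; I would isolate this as the technical heart of the argument and handle the $\alpha>1$ non-Donsker case separately in Proposition \ref{prop:lower_bound_ep_set_nolog}.
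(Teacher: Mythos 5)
Your proposal matches the paper's proof in all essentials: Proposition \ref{prop:lower_bound_ep_set} for the Gaussian lower bound, then the multiplier inequality of \cite{han2017sharp} (Lemma \ref{lem:interpolation_ineq}) combined with Lemma \ref{lem:Gaussian_order_stat} and the uniform-in-$k$ upper bounds from Theorem \ref{thm:upper_bound_ep} to de-Gaussianize without a logarithmic loss, followed by de-symmetrization. The "bookkeeping" you flag is exactly what the paper carries out, splitting the weighted sum at $k=\lfloor c'n\rfloor$ and using the Gaussian order-statistic tail bound to absorb the small-$k$ contribution into half of the lower bound.
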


\begin{proof}
	By Proposition \ref{prop:lower_bound_ep_set}, the Gaussianized empirical process satisfies 
	\begin{align*}
	\E \sup_{f \in \mathcal{F}(\sigma_n)} \biggabs{\frac{1}{\sqrt{n}}\sum_{i=1}^n g_i f(X_i) }\gtrsim  \sigma_n \sqrt{\log \mathcal{N}(\sigma_n/2,\mathcal{F}(\sigma_n), L_2(P))}\geq C_1^{-1} \sigma_n^{1-\alpha}.
	\end{align*}
	Suppose that $\sigma_n^2\leq c$. Without loss of generality, we assume that $\sigma_n^2\equiv \sigma_n(\gamma)^2=c n^{-\gamma}$ for some $0\leq \gamma\leq 1/(\alpha+1)$ and define $\sigma_k^2 \equiv c k^{-\gamma}$. We first prove the following claim: there exists some $c_1\equiv c_1(c,\alpha)>0$ such that for any $0\leq \gamma\leq 1/(\alpha+1)$, 
	\begin{align}\label{ineq:lower_bound_ep_set_Donsker_1}
	\E \sup_{f \in \mathcal{F}(\sigma_n)} \biggabs{\frac{1}{\sqrt{n}}\sum_{i=1}^n \epsilon_i f(X_i) }\geq c_1 \sigma_n^{1-\alpha}. 
	\end{align}
	To this end, let $
	a_n\equiv (\sigma_n^{1-\alpha})^{-1} \E \sup_{f \in \mathcal{F}(\sigma_n)} \bigabs{\frac{1}{\sqrt{n}}\sum_{i=1}^n \epsilon_i f(X_i) }$. 
	Then by the local maximal inequality (cf. Theorem \ref{thm:upper_bound_ep}-(1)), we see that $\sup_{k \in \N} a_k \leq C_2\equiv C_2(\alpha)$. Since $\sqrt{n}\sigma_n^{1-\alpha}=c^{(1-\alpha)/2} n^\beta$ where $\beta\equiv \beta(\alpha,\gamma)\equiv \frac{1}{2}\big(1-(1-\alpha)\gamma\big) \in [\alpha/(1+\alpha),1/2]$, we have by Lemma \ref{lem:interpolation_ineq} that for a constant $c'>0$ to be determined later,
	\begin{align}\label{ineq:lower_bound_ep_set_Donsker_3}
	C_1^{-1}  n^\beta &\leq c^{-(1-\alpha)/2}\E \sup_{f \in \mathcal{F}(\sigma_n)} \biggabs{\sum_{i=1}^n g_i f(X_i) } \nonumber\\
	&\leq c^{-(1-\alpha)/2}\E \left[ \sum_{k=1}^n  (\abs{g_{(k)}}-\abs{g_{(k+1)}}) \E \sup_{f \in \mathcal{F}(\sigma_k)} \biggabs{\sum_{i=1}^k \epsilon_i f(X_i) } \right]\nonumber\\
	&\leq \E \bigg[ \sum_{k=1}^{ \floor{c'n}-1 }  (\abs{g_{(k)}}-\abs{g_{(k+1)}}) a_k k^\beta \bigg]\nonumber\\
	&\qquad+ \E \bigg[ \sum_{k={\floor{c'n} }}^n  (\abs{g_{(k)}}-\abs{g_{(k+1)}}) a_k k^\beta \bigg]\equiv (I)+(II).
	\end{align}	
	For $(I)$ in (\ref{ineq:lower_bound_ep_set_Donsker_3}), using the same notation as in the proof of Lemma \ref{lem:Gaussian_order_stat},
	\begin{align*}
	(I) &\leq C_2 \cdot \E \bigg[ \sum_{k=1}^{\floor{c'n}-1} (\abs{g_{(k)}}-\abs{g_{(k+1)}}) k^\beta \bigg]\\
	&\leq  C_2\cdot   \E \bigg[\sum_{k=1}^{\floor{c'n}-1} \int_{\abs{g_{(k+1)}} }^{\abs{g_{(k)}}} k^\beta\ \d{t}\bigg] \\
	& \leq  C_2\cdot  \E \int_0^\infty \bigg(\sum_{i=1}^n \bm{1}_{\abs{g_i}\geq t}\bm{1}_{\abs{g_{(\floor{c'n})}}\leq t\leq \abs{g_{(1)}}}\bigg)^\beta \ \d{t} \nonumber\\
	& \leq C_2 n^\beta\cdot  \int_0^\infty \bigg(\Prob(\abs{g_1}>t) \Prob\big(\abs{g_{(\floor{c'n})}}\leq t\leq \abs{g_{(1)}}\big)\bigg)^{\beta/2}\ \d{t} \nonumber \\ 
	& \leq C_2 n^\beta \bigg( \int_0^{K^{-1} \sqrt{\log(1/c')}} \phi(t)^{\beta/2} \Prob\big(\abs{g_{(\floor{c'n})}}\leq t\big)^{\beta/2}\ \d{t} +  \int_{K^{-1}\sqrt{\log(1/c')}}^{\infty} \phi(t)^{\beta/2}\ \d{t}\bigg) \nonumber\\
	&\leq C_2 n^\beta \bigg(\int_0^{K^{-1} \sqrt{\log(1/c')}}  e^{-\beta t^2/4-\beta (c')^2 n/2K}\ \d{t} + \int_{K^{-1}\sqrt{\log(1/c')}}^{\infty} e^{-\beta t^2/4}\ \d{t}\bigg)\nonumber\\
	&\leq C_3 n^\beta \big( e^{-(c')^2 n/C_3}+ e^{-\log(1/c')/C_3}\big)\leq (C_1^{-1}/2) n^\beta, \nonumber
	\end{align*}
	by choosing $c'\equiv \exp\big(-C_3 \log(4 C_1C_3)\big)$ and $n\geq C_3 \log(4C_1C_3)/(c')^2$. On the other hand, for $(II)$ in (\ref{ineq:lower_bound_ep_set_Donsker_3}), we have
	\begin{align*}
	(II)&\leq \big(\max_{\floor{c'n}\leq k\leq n} a_k \big) \E \bigg[ \sum_{k=1}^n  (\abs{g_{(k)}}-\abs{g_{(k+1)}}) k^\beta \bigg]
	\leq \big(\max_{\floor{c'n}\leq k\leq n} a_k \big) G_\alpha n^\beta,
	\end{align*}
	where $G_\alpha = \int_0^\infty \big(\Prob(\abs{g_1}>t)\big)^{\alpha/(1+\alpha)}\ \d{t}<\infty$ since Gaussian random variables have finite moments of any order, and the last inequality follows from Jensen's inequality. Combining the above displays we see that
	\begin{align*}
	1/(2C_1 G_a)&\leq \max_{\floor{c' n}\leq k\leq n} a_k \leq \max_{\floor{c' n}\leq k\leq n}  (\sigma_k^{1-\alpha})^{-1} \E \sup_{f \in \mathcal{F}(\sigma_k)} \biggabs{\frac{1}{\sqrt{k}}\sum_{i=1}^k \epsilon_i f(X_i) }\\
	&\leq \sigma_{n}^{-(1-\alpha)}\cdot \sqrt{1/(c'
		-1/n)}\cdot  \E \sup_{f \in \mathcal{F}(\sigma_{\floor{c' n}})}\biggabs{\frac{1}{\sqrt{n}}\sum_{i=1}^n \epsilon_i f(X_i) },
	\end{align*}
	where in the last inequality we used Jensen's inequality. This proves our claim (\ref{ineq:lower_bound_ep_set_Donsker_1}) by adjusting the constant and choosing $n\geq 2/c'$. Now by de-symmetrization inequality (cf. \cite[Lemma 2.3.6]{van1996weak}), we have that
	\begin{align}\label{ineq:lower_bound_ep_set_Donsker_2}
	\E \sup_{f \in \mathcal{F}(\sigma_n)} \biggabs{\frac{1}{\sqrt{n}}\sum_{i=1}^n \epsilon_i f(X_i) }\leq 2\E \sup_{f \in \mathcal{F}(\sigma_n)} \abs{\G_n(f)}+2\sigma_n.
	\end{align}
	For $\sigma_n\leq (c_1/4)^{1/\alpha}\wedge c^{1/2}$, the claim of the proposition follows from (\ref{ineq:lower_bound_ep_set_Donsker_1}) and (\ref{ineq:lower_bound_ep_set_Donsker_2}). On the other hand, the claim is trivial for $\sigma_n> (c_1/4)^{1/\alpha}\wedge c^{1/2}$.
\end{proof}

\begin{remark}
	From the proof of Proposition \ref{prop:lower_bound_ep_set_donsker}, the bracketing entropy upper bound is only used to prove $\sup_{k \in \N} a_k \leq C_2\equiv C_2(\alpha)$. This means that we may impose instead a uniform entropy upper bound condition as in \cite{gine2006concentration} in the regime $0<\alpha<1$.
\end{remark}

\begin{proposition}\label{prop:lower_bound_ep_set_nolog}
	Let the conditions in Theorem \ref{thm:upper_bound_ep}-(3) hold for some $\alpha>1$ and $L>0$ large enough. Then for $\sigma_n^2\equiv c n^{-1/(\alpha+1)}$ with some constant $c>0$, $
	\E \sup_{f \in \mathcal{F}(\sigma_n)}\abs{\G_n(f)}\gtrsim n^{(\alpha-1)/2(\alpha+1)}.$
\end{proposition}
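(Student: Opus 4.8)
The plan is to follow the two-step strategy behind the proof of Proposition~\ref{prop:lower_bound_ep_set_donsker}, now in the non-Donsker regime $\alpha>1$, and then de-symmetrize; throughout write $\beta\equiv\alpha/(\alpha+1)\in(1/2,1)$. \emph{Step 1 (Gaussianized lower bound).} I would apply Proposition~\ref{prop:lower_bound_ep_set} at $\sigma=\sigma_n$. The requirement $\sigma_n\geq 50n^{-1/2}$ holds for $n$ large since $\alpha>1$ and $\sigma_n^2=cn^{-1/(\alpha+1)}$. For the entropy requirement, $\mathcal{F}\subset L_\infty(1)$ gives $P(f-g)^2\leq 2P|f-g|$ for $f,g\in\mathcal{F}$, so (truncating bracket endpoints to $[-1,1]$) an $L_1(P)$-bracket of size $\epsilon^2/2$ produces an $L_2(P)$-ball of radius $\epsilon$; combined with (\ref{ineq:upper_bound_ep_1}) this yields $\log\mathcal{N}(\epsilon,\mathcal{F},L_2(P))\lesssim_\alpha\epsilon^{-2\alpha}$, hence $\log\mathcal{N}(\sigma_n/4,\mathcal{F}(\sigma_n),L_2(P))\lesssim_\alpha\sigma_n^{-2\alpha}$. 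Since both $\sigma_n^{-2\alpha}$ and $n\sigma_n^2$ are of order $n^{\alpha/(\alpha+1)}$ after inserting $\sigma_n^2=cn^{-1/(\alpha+1)}$, the bound $\log\mathcal{N}(\sigma_n/4,\mathcal{F}(\sigma_n),L_2(P))\leq n\sigma_n^2/4000$ holds once $c=c(\alpha)$ is taken large enough. Proposition~\ref{prop:lower_bound_ep_set} then gives $\E\sup_{f\in\mathcal{F}(\sigma_n)}|n^{-1/2}\sum_{i=1}^ng_if(X_i)|\gtrsim\sigma_n\sqrt{\log\mathcal{N}(\sigma_n/2,\mathcal{F}(\sigma_n),L_2(P))}$, and (\ref{ineq:thm_lower_bound_1}) at $\epsilon=\sigma_n$ bounds this entropy below by $L^{-1}\sigma_n^{-2\alpha}$. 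Hence $\E\sup_{f\in\mathcal{F}(\sigma_n)}|n^{-1/2}\sum_{i=1}^ng_if(X_i)|\gtrsim_\alpha\sigma_n^{1-\alpha}$, i.e.\ $\E\sup_{f\in\mathcal{F}(\sigma_n)}|\sum_{i=1}^ng_if(X_i)|\gtrsim_\alpha c^{(1-\alpha)/2}n^{\beta}$ since $\sqrt n\,\sigma_n^{1-\alpha}=c^{(1-\alpha)/2}n^\beta$.

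\emph{Step 2 (de-Gaussianization).} Put $\sigma_k^2\equiv ck^{-1/(\alpha+1)}$ and $a_k\equiv(\sigma_k^{1-\alpha})^{-1}\E\sup_{f\in\mathcal{F}(\sigma_k)}|k^{-1/2}\sum_{i=1}^k\epsilon_if(X_i)|$. Applying Theorem~\ref{thm:upper_bound_ep}-(1) with $p=1$ (its hypothesis being exactly the $L_1$-bracketing bound in (\ref{ineq:upper_bound_ep_1})) together with the symmetrization inequality shows $\sup_{k\in\N}a_k\leq C_2(\alpha)<\infty$. Feeding the Gaussianized process into the multiplier inequality (Lemma~\ref{lem:interpolation_ineq}), using $\mathcal{F}(\sigma_n)\subset\mathcal{F}(\sigma_k)$ for $k\leq n$, and noting $\sigma_k^{1-\alpha}\sqrt k=c^{(1-\alpha)/2}k^\beta$, one obtains after cancelling $c^{(1-\alpha)/2}$ that $C_1^{-1}n^\beta\leq(I)+(II)$, where $(I)$ and $(II)$ are the expected sums of $(|g_{(k)}|-|g_{(k+1)}|)\,a_k\,k^\beta$ over $k<\floor{c'n}$ and $k\geq\floor{c'n}$. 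For $(I)$ I would bound $a_k\leq C_2$ and run the order-statistics estimate from the proof of Proposition~\ref{prop:lower_bound_ep_set_donsker} verbatim---it uses only $\beta<1$ and the tail bound of Lemma~\ref{lem:Gaussian_order_stat}---to get $(I)\leq(C_1^{-1}/2)n^\beta$ after choosing $c'=c'(\alpha)$ small and $n$ large. For $(II)$ I would pull out $\max_{\floor{c'n}\leq k\leq n}a_k$ and apply Jensen's inequality to the concave map $x\mapsto x^\beta$, giving $(II)\leq(\max_{\floor{c'n}\leq k\leq n}a_k)\,G_\alpha\,n^\beta$ with $G_\alpha\equiv\int_0^\infty\Prob(|g_1|>t)^{\alpha/(\alpha+1)}\,\d{t}<\infty$. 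Consequently $\max_{\floor{c'n}\leq k\leq n}a_k\geq(2C_1G_\alpha)^{-1}$.

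\emph{Step 3 (transfer and de-symmetrization).} For $\floor{c'n}\leq k\leq n$ one has $\sigma_n\leq\sigma_k\leq\sigma_{\floor{c'n}}\leq C_4(\alpha)\sigma_n$, so $\mathcal{F}(\sigma_k)\subset\mathcal{F}(C_4\sigma_n)$; combining this with $k/n\geq c'/2$ and the monotonicity of $k\mapsto\E\sup_f|\sum_{i=1}^k\epsilon_if(X_i)|$ (nondecreasing, by conditioning on $\epsilon_{k+1}$ and Jensen), the bound $\max_ka_k\geq(2C_1G_\alpha)^{-1}$ upgrades to $\E\sup_{f\in\mathcal{F}(C_4\sigma_n)}|n^{-1/2}\sum_{i=1}^n\epsilon_if(X_i)|\gtrsim_\alpha(C_4\sigma_n)^{1-\alpha}$, which, after renaming $C_4^2c$ as $c$, is the asserted lower bound for the symmetrized process. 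Finally \cite[Lemma~2.3.6]{van1996weak} gives $\E\sup_{f\in\mathcal{F}(\sigma_n)}|\G_n(f)|\geq\frac12\E\sup_{f\in\mathcal{F}(\sigma_n)}|n^{-1/2}\sum_{i=1}^n\epsilon_if(X_i)|-\sigma_n\gtrsim_\alpha\sigma_n^{1-\alpha}-\sigma_n\asymp_\alpha\sigma_n^{1-\alpha}=n^{(\alpha-1)/2(\alpha+1)}$, the subtracted $\sigma_n$ being negligible because $\sigma_n^{1-\alpha}/\sigma_n=\sigma_n^{-\alpha}\to\infty$.

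The main obstacle is Step~2: stripping off the Gaussian multiplier without paying an extra $\sqrt{\log n}$. The naive estimate $\E\sup_f|\sum_ig_if(X_i)|\leq\E(\max_i|g_i|)\cdot\E\sup_f|\sum_i\epsilon_if(X_i)|$ already costs a logarithmic factor, so one must exploit, through the path decomposition of Lemma~\ref{lem:interpolation_ineq} and the Gaussian order-statistic tail estimate of Lemma~\ref{lem:Gaussian_order_stat}, that in the non-Donsker regime the symmetrized process is of the right size $\asymp k^\beta$ simultaneously for all $k=1,\dots,n$; this is precisely what forces the low-index part $(I)$ to be exponentially small. Relative to the Donsker argument the only changes are that $\beta=\alpha/(\alpha+1)$ now exceeds $1/2$---harmless, as only $\beta<1$ enters---and that several monotonicities in $\sigma$ flip because $1-\alpha<0$, so one must keep careful track of their directions.
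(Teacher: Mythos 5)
Your proposal is correct, and it rests on the same two pillars as the paper's proof: the Sudakov-type lower bound of Proposition \ref{prop:lower_bound_ep_set} for the Gaussianized process, followed by the multiplier inequality of Lemma \ref{lem:interpolation_ineq} to strip off the Gaussian weights. The difference lies in how the second step is executed. You transplant the Donsker-case machinery of Proposition \ref{prop:lower_bound_ep_set_donsker} wholesale: the split into a low-index part $(I)$, killed by the order-statistic tail bound of Lemma \ref{lem:Gaussian_order_stat}, and a high-index part $(II)$, from which $\max_{\floor{c'n}\leq k\leq n}a_k\gtrsim 1$ is extracted. Your observations that this machinery only needs $\beta<1$ (so $\beta=\alpha/(\alpha+1)\in(1/2,1)$ is harmless), that $\sup_k a_k<\infty$ follows from Theorem \ref{thm:upper_bound_ep}-(1) with $p=1$, and that the reversed monotonicity of $k\mapsto\sigma_k^{1-\alpha}$ for $\alpha>1$ costs only a benign renaming of $c$ (permitted by the ``for some constant $c$'' phrasing) are all accurate. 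The paper instead exploits a shortcut specific to the non-Donsker regime: it applies Lemma \ref{lem:interpolation_ineq} in the cruder form $\E\sup_{\mathcal{F}(\sigma_n)}|n^{-1/2}\sum_i g_if(X_i)|\lesssim\max_{1\leq k\leq n}\E\sup_{\mathcal{F}(\sigma_n)}|k^{-1/2}\sum_{i=1}^k\epsilon_if(X_i)|$ with the index class frozen at $\sigma_n$, pairs this with the matching upper bound $\E\sup_{\mathcal{F}(\sigma_k)}|\G_k(f)|\leq Kk^{(\alpha-1)/2(\alpha+1)}$ at every scale $k$, observes that since $k\mapsto k^{(\alpha-1)/2(\alpha+1)}$ is \emph{increasing} (precisely where $\alpha>1$ enters) the maximizing index $k_n$ must satisfy $k_n\geq K^{-2/\beta}n$, and then passes from $k_n$ to $n$ by one application of Jensen's inequality. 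That route avoids Lemma \ref{lem:Gaussian_order_stat} and the delicate choice of $c'$ entirely and keeps the localization radius fixed at $\sigma_n$; yours is longer but makes the Donsker and non-Donsker arguments uniform. Both are valid proofs of the proposition.
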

\begin{proof}
	Proposition \ref{prop:lower_bound_ep_set} shows that 
	\begin{align*}
	\E \sup_{f \in \mathcal{F}(\sigma_n)} \biggabs{\frac{1}{\sqrt{n}}\sum_{i=1}^n g_i f(X_i) }\gtrsim  \sigma_n \sqrt{\log \mathcal{N}(\sigma_n/2,\mathcal{F}(\sigma_n), L_2(P))}\gtrsim n^{(\alpha-1)/2(\alpha+1)}.
	\end{align*}
	Now applying Lemma \ref{lem:interpolation_ineq} in the following form,
	\begin{align*}
	\E \sup_{f \in \mathcal{F}(\sigma_n)} \biggabs{\frac{1}{\sqrt{n}}\sum_{i=1}^n g_i f(X_i) }&\lesssim \max_{1\leq k\leq n} \E \sup_{f \in \mathcal{F}(\sigma_n)} \biggabs{\frac{1}{\sqrt{k}}\sum_{i=1}^k \epsilon_i f(X_i) },
	\end{align*}
	we see that for some $K>0$,
	\begin{align*}
	\max_{1\leq k\leq n} \E \sup_{f \in \mathcal{F}(\sigma_n)} \abs{\G_k(f)}\geq K^{-1} n^{(\alpha-1)/2(\alpha+1)}.
	\end{align*}
	On the other hand, by enlarging $K$ if necessary, Theorem \ref{thm:upper_bound_ep}-(1) entails that $
	\E \sup_{f \in \mathcal{F}(\sigma_k)} \abs{\G_k(f)} \leq K\cdot k^{(\alpha-1)/2(\alpha+1)}$, 
	and hence
	\begin{align*}
	\max_{1\leq k\leq n} \E \sup_{f \in \mathcal{F}(\sigma_k)} \abs{\G_k(f)}\leq K n^{(\alpha-1)/2(\alpha+1)}
	\end{align*}
	by the assumption $\alpha> 1$. Combining the upper and lower estimates we see that 
	\begin{align*}
	K^{-1}n^{(\alpha-1)/2(\alpha+1)}&\leq \max_{1\leq k\leq n} \E \sup_{f \in \mathcal{F}(\sigma_n)} \abs{\G_k(f)}\\
	&\leq \max_{1\leq k\leq n}\E \sup_{f \in \mathcal{F}(\sigma_k)} \abs{\G_k(f)} \leq Kn^{(\alpha-1)/2(\alpha+1)}.
	\end{align*}
	Now  we will argue that the $\max$ operator can be `eliminated'. To this end, let $a_k\equiv \E \sup_{f \in \mathcal{F}(\sigma_n)} \abs{\G_k(f)}$ and $\beta \equiv (\alpha-1)/2(\alpha+1)$ for notational convenience. Let $k_n\equiv \argmax_{1\leq k\leq n} a_k$. We claim that $k_n \in [cn,n]$ where $c=K^{-2/\beta}\in (0,1)$. To see this, we only need to note $
	K^{-1} n^\beta \leq \max_{1\leq k\leq n} a_k=a_{k_n}\leq K k_n^\beta$, 
	which entails $k_n^\beta \geq K^{-2} n^\beta$. Hence
	\begin{align*}
	K^{-1} n^{(\alpha-1)/2(\alpha+1)}\leq \E \sup_{f \in \mathcal{F}(\sigma_n)} \abs{\G_{k_n}(f)}\leq \frac{1}{\sqrt{c}} \E \sup_{f \in \mathcal{F}(\sigma_n)} \abs{\G_n(f)}
	\end{align*}
	where the last inequality follows from Jensen's inequality, proving the claim.
\end{proof}

\begin{proof}[Proof of Theorem \ref{thm:upper_bound_ep}-(3)]
	The claims follow by combining Propositions \ref{prop:lower_bound_ep_set_donsker} and \ref{prop:lower_bound_ep_set_nolog}.
\end{proof}

\section{Proofs for Section \ref{section:application}}\label{section:proof_application}

\subsection{Proof of Theorem \ref{thm:rate_optimal_lse_reg}}

Before the proof of Theorem \ref{thm:rate_optimal_lse_reg}, we need following:
\begin{lemma}\label{lem:lse_additive_error}
	Consider the regression model (\ref{eqn:reg_model}) and the least squares estimator $\widehat{C}_n$ in (\ref{def:lse_reg_add}). Suppose that $(\xi_1,X_1),\ldots,(\xi_n,X_n)$ are i.i.d. random vectors with $\E[\xi_1|X_1]=0$ and $\E[\xi_1^2|X_1]\lesssim 1\vee \pnorm{\xi_1}{2}^2$ almost surely.  Further assume that
	\begin{align}\label{ineq:ep_additive_error}
	&\E \sup_{C \in \mathscr{C}: P\abs{C\Delta C_0}\leq \delta^2} \biggabs{\frac{1}{\sqrt{n}}\sum_{i=1}^n \epsilon_i(\bm{1}_{C}-\bm{1}_{C_0})(X_i) } \nonumber\\
	&\qquad \bigvee \E \sup_{C \in \mathscr{C}: P\abs{C\Delta C_0}\leq \delta^2} \biggabs{\frac{1}{\sqrt{n}}\sum_{i=1}^n \xi_i(\bm{1}_{C}-\bm{1}_{C_0})(X_i) }\lesssim \phi_n(\delta),
	\end{align}
	hold for some $\phi_n$ such that $\delta\mapsto \phi_n(\delta)/\delta$ is non-increasing. 
	Then  $\E_{C_0} P\abs{\widehat{C}_n\Delta C_0}=\mathcal{O}(\delta_n^2)$
	holds for any $\delta_n\geq n^{-1/2}\max\{1, \pnorm{\xi_1}{2}, \E^{1/8}\max_{1\leq i\leq n}\abs{\xi_i}^4\}$ such that $\phi_n(\delta_n)\leq \sqrt{n}\delta_n^2$, where the constant in $\mathcal{O}$ only depends on the constants in (\ref{ineq:ep_additive_error}).
\end{lemma}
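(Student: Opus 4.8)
The plan is to run the by-now standard ``basic inequality, then peeling against a modulus of continuity'' argument for least squares estimators; the only delicate point is the treatment of the noise (multiplier) empirical process. Write $g\equiv\bm{1}_{\widehat{C}_n}-\bm{1}_{C_0}$; since $g$ is a difference of indicators, $g^2=\abs{g}=\bm{1}_{\widehat{C}_n\Delta C_0}$, so $Pg^2=P\abs{\widehat{C}_n\Delta C_0}$ and $\Prob_n g^2=\Prob_n\abs{\widehat{C}_n\Delta C_0}$. Substituting $Y_i=\bm{1}_{C_0}(X_i)+\xi_i$ into $\Prob_n(Y-\bm{1}_{\widehat{C}_n})^2\leq\Prob_n(Y-\bm{1}_{C_0})^2$ and expanding, the $\Prob_n\xi^2$ terms cancel and one obtains the basic inequality $\Prob_n g^2\leq 2\Prob_n(\xi g)$. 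Since $\E[\xi_1\mid X_1]=0$, for every fixed measurable $h$ the function $\xi h$ is $P$-centered, hence $\Prob_n(\xi g)=n^{-1/2}\G_n(\xi g)$ and trivially $Pg^2=\Prob_n g^2-n^{-1/2}\G_n(g^2)$. Combining,
\[
P\abs{\widehat{C}_n\Delta C_0}\;\leq\;2n^{-1/2}\bigabs{\G_n(\xi g)}+n^{-1/2}\bigabs{\G_n(\bm{1}_{\widehat{C}_n\Delta C_0})},
\]
which is exactly the input a van der Vaart--Wellner / van de Geer rate argument needs, with ``loss'' $d^2(C,C_0)=P\abs{C\Delta C_0}=\pnorm{\bm{1}_C-\bm{1}_{C_0}}{L_2(P)}^2$ and the fluctuation governed by two empirical processes indexed by $C$.

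Next I would localize by the shells $\mathscr{S}_j\equiv\{C\in\mathscr{C}:2^{j-1}\delta_n<\pnorm{\bm{1}_C-\bm{1}_{C_0}}{L_2(P)}\leq 2^j\delta_n\}$, $j\geq 1$. The multiplier term needs no symmetrization: $\G_n(\xi(\bm{1}_C-\bm{1}_{C_0}))=n^{-1/2}\sum_i\xi_i(\bm{1}_C-\bm{1}_{C_0})(X_i)$ exactly (by centering), so the hypothesis (\ref{ineq:ep_additive_error}) yields $\E\sup_{C\in\mathscr{S}_j}\abs{\G_n(\xi(\bm{1}_C-\bm{1}_{C_0}))}\lesssim\phi_n(2^j\delta_n)$. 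For the other term, symmetrization followed by the Ledoux--Talagrand contraction applied to the $1$-Lipschitz map $t\mapsto\abs t$ (which vanishes at $0$) rewrites $\bm{1}_{C\Delta C_0}=\abs{\bm{1}_C-\bm{1}_{C_0}}$ in terms of $\bm{1}_C-\bm{1}_{C_0}$, so again $\E\sup_{C\in\mathscr{S}_j}\abs{\G_n(\bm{1}_{C\Delta C_0})}\lesssim\phi_n(2^j\delta_n)$. Since $\delta\mapsto\phi_n(\delta)/\delta$ is non-increasing, $\phi_n(2^j\delta_n)\leq 2^j\phi_n(\delta_n)\leq 2^j\sqrt n\,\delta_n^2$ by the assumption $\phi_n(\delta_n)\leq\sqrt n\,\delta_n^2$.

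For the peeling, observe that on $\{\widehat{C}_n\in\mathscr{S}_j\}$ the basic inequality forces $\sup_{C\in\mathscr{S}_j}\bigl(\abs{\G_n(\xi(\bm{1}_C-\bm{1}_{C_0}))}\vee\abs{\G_n(\bm{1}_{C\Delta C_0})}\bigr)$ to exceed a fixed multiple of $\sqrt n\,2^{2j}\delta_n^2$. A bare Markov bound gives only $\Prob(\widehat{C}_n\in\mathscr{S}_j)\lesssim 2^{-j}$, which is not summable against the weight $2^{2j}$ with which $\mathscr{S}_j$ enters $\E\,P\abs{\widehat{C}_n\Delta C_0}$; so one must upgrade the first-moment bounds to exponential tails via Talagrand's concentration inequality (Appendix~\ref{section:tools}). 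For the indicator process this is immediate (envelope $1$, variance $\lesssim 2^{2j}\delta_n^2$ on $\mathscr{S}_j$). For the multiplier process, $\xi$ is unbounded, so one truncates $\xi_i$ at a level dictated by $\E\max_{i\leq n}\abs{\xi_i}^4$ and handles the discarded tail crudely; Talagrand then applies with envelope the truncation level and, by $\E[\xi_1^2\mid X_1]\lesssim 1\vee\pnorm{\xi_1}{2}^2$, with variance $\lesssim(1\vee\pnorm{\xi_1}{2}^2)2^{2j}\delta_n^2$. The resulting tail parameters involve precisely $\pnorm{\xi_1}{2}^2$ (through the variance) and $\E\max_{i\leq n}\abs{\xi_i}^4$ (through the envelope), and requiring these exponents to remain at least a constant multiple of $2^{2j}$ for all $j$ is exactly what pins down the admissible range $\delta_n\geq n^{-1/2}\max\{1,\pnorm{\xi_1}{2},\E^{1/8}\max_{i\leq n}\abs{\xi_i}^4\}$. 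Once $\sum_{j\geq 1}2^{2j}\Prob(\widehat{C}_n\in\mathscr{S}_j)<\infty$ is established, and since $P\abs{\widehat{C}_n\Delta C_0}\leq 1$ always, $\E_{C_0}P\abs{\widehat{C}_n\Delta C_0}=\int_0^1\Prob\bigl(P\abs{\widehat{C}_n\Delta C_0}>t\bigr)\,\d{t}\leq\delta_n^2+\sum_{j\geq 1}(2^j\delta_n)^2\Prob(\widehat{C}_n\in\mathscr{S}_j)=\mathcal{O}(\delta_n^2)$.

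The margin-type step costs nothing here, because for indicators the squared-loss discrepancy coincides with the $L_2^2$ (equivalently $L_1$) distance, so there is no linearization to control; consequently the sole real obstacle is the multiplier empirical process $n^{-1/2}\sum_i\xi_i(\bm{1}_C-\bm{1}_{C_0})(X_i)$ --- centering it via $\E[\xi\mid X]=0$, dominating it by the hypothesized modulus $\phi_n$, and, above all, converting its first-moment bound into a tail strong enough to survive the $2^{2j}$ weights of the peeling, which is what forces both the truncation of $\xi$ and the particular moment quantities ($\pnorm{\xi_1}{2}$ and $\E\max_i\abs{\xi_i}^4$) that appear in the stated range of $\delta_n$.
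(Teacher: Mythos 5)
Your reduction is the same as the paper's: the basic inequality $\Prob_n(\bm{1}_{\widehat C_n}-\bm{1}_{C_0})^2\leq 2\Prob_n\big(\xi(\bm{1}_{\widehat C_n}-\bm{1}_{C_0})\big)$, the identity $(\bm{1}_C-\bm{1}_{C_0})^2=\bm{1}_{C\Delta C_0}$, the decomposition into a multiplier process and a centered indicator process, and the shell peeling at radii $2^j\delta_n$ all match the paper's argument (there phrased via $\mathbb{M}_n\bm{1}_C-M\bm{1}_C$). You also correctly diagnose the one real obstacle: a first-moment Markov bound on each shell gives only $2^{-j}$, which does not beat the $2^{2j}$ weights. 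Where you diverge is in how you upgrade the bound. You propose Talagrand's concentration inequality applied after truncating the unbounded $\xi_i$'s, obtaining exponential tails; the paper instead invokes the Gin\'e--Lata{\l}a--Zinn moment inequality (\cite[Proposition 3.1]{gine2000exponential}) to bound the \emph{fourth} moment of each shell supremum by $[\phi_n(2^jt\delta_n)]^4+(1\vee\pnorm{\xi_1}{2})^4 2^{4j}t^4\delta_n^4+n^{-2}\{1\vee\E\max_i\abs{\xi_i}^4\}$ and then applies Chebyshev, yielding a tail $\Prob(P^{1/2}\abs{\widehat C_n\Delta C_0}\geq t\delta_n)\lesssim t^{-4}$ with $2^{-4j}$ decay across shells --- polynomial but amply summable against $2^{2j}$. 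The paper's route is cleaner here precisely because it needs no truncation: the moment inequality absorbs the unbounded multipliers directly, and the three terms in the fourth-moment bound are exactly what force $\delta_n\geq n^{-1/2}\max\{1,\pnorm{\xi_1}{2},\E^{1/8}\max_i\abs{\xi_i}^4\}$. Your route should also work, but the truncation step (choice of level, control of the discarded event, and verifying that the resulting exponents stay $\gtrsim 2^{2j}$ uniformly in $j$) is the part you have only sketched, and it is where the bookkeeping that produces the stated admissible range of $\delta_n$ would have to be carried out.
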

\begin{proof}
See Appendix \ref{section:proof_lemma}.
\end{proof}

\begin{proof}[Proof of Theorem \ref{thm:rate_optimal_lse_reg}]
	By Lemma \ref{lem:lse_additive_error} the risk of the least squares estimator
	\begin{align*}
	\delta_n^2\equiv \sup_{C_0 \in \mathscr{C}} \E_{C_0} P\abs{\widehat{C}_n\Delta C_0}=\sup_{C_0 \in \mathscr{C}} \E_{C_0} \int (\bm{1}_{\widehat{C}_n}-\bm{1}_{C_0})^2\ \d{P}
	\end{align*}
	can be solved by estimating the empirical processes in (\ref{ineq:ep_additive_error}). Since the global entropy estimate is translation invariant (i.e. the metric entropy of $\{\bm{1}_C-\bm{1}_{C_0}: C \in \mathscr{C}\}$ is the same as that of $\{\bm{1}_C: C \in \mathscr{C}\}$), by Theorem \ref{thm:upper_bound_ep_set}, we obtain an estimate for the Rademacher randomized empirical process:
	\begin{align*}
	\sup_{C_0 \in \mathscr{C}}\E \sup_{C \in \mathscr{C}: P\abs{C\Delta C_0}\leq \delta_n^2} \biggabs{\frac{1}{\sqrt{n}}\sum_{i=1}^n \epsilon_i(\bm{1}_{C}-\bm{1}_{C_0})(X_i) }  \lesssim \max\{\delta_n^{1-\alpha}, n^{(\alpha-1)/2(\alpha+1)}\}.
	\end{align*}
	It is now easy to see that the choice $\delta_n^2\asymp n^{-1/(\alpha+1)}$ leads to an upper bound of the above display on the desired order $\sqrt{n}\delta_n^2$. Note that the bound continues to hold when the left hand side of the above display is replaced with expected supremum without localization over $C \in \mathscr{C}$ for $P\abs{C\Delta C_0}\leq \delta_n^2$. The Gaussian randomized empirical process can be handled via the multiplier inequality Lemma \ref{lem:multiplier_ineq} by letting $\psi_n(t)\equiv \psi(t)\equiv t^{\alpha/(\alpha+1)}$, whence
	\begin{align*}
	\sup_{C_0 \in \mathscr{C}}\E \sup_{C \in \mathscr{C}} \biggabs{\frac{1}{\sqrt{n}}\sum_{i=1}^n \xi_i(\bm{1}_{C}-\bm{1}_{C_0})(X_i) }  \lesssim  n^{(\alpha-1)/2(\alpha+1)},
	\end{align*}
    completing the proof.
\end{proof}

\subsection{Proof of Theorem \ref{thm:tsybakov_example}}

We need the following analogy of Lemma \ref{lem:lse_additive_error} before proving Theorem \ref{thm:tsybakov_example}.

\begin{lemma}\label{lem:lse_dependent_bounded_error}
	Consider the regression model (\ref{eqn:reg_multi_model}) and the least squares estimator $\widehat{C}_n$ in (\ref{def:lse_reg_multi}). Further assume that
	\begin{align*}
	&\E \sup_{C \in \mathscr{C}: P\abs{C\Delta C_0}\leq \delta^2} \biggabs{\frac{1}{\sqrt{n}}\sum_{i=1}^n \epsilon_i(\bm{1}_{C}-\bm{1}_{C_0})(X_i) }\\
	&\qquad \bigvee \E \sup_{C \in \mathscr{C}: P\abs{C\Delta C_0}\leq \delta^2} \biggabs{\frac{1}{\sqrt{n}}\sum_{i=1}^n \epsilon_i(\bm{1}_{C\cap C_0}-\bm{1}_{C_0})(X_i) } \lesssim \phi_n(\delta),
	\end{align*}
	holds for some $\phi_n$ such that $\delta\mapsto \phi_n(\delta)/\delta$ is non-increasing. 
	Then  $\E_{C_0} P \abs{\widehat{C}_n\Delta C_0}=\mathcal{O}(\delta_n^2)$
	holds for any $\delta_n\geq n^{-1/2}\max\{1, \pnorm{\xi_1}{2}, \E^{1/8}\max_{1\leq i\leq n}\abs{\xi_i}^4\}$ such that $\phi_n(\delta_n)\leq \sqrt{n}\delta_n^2$, where the constant in $\mathcal{O}$ only depends on the constants in the above inequality. 
\end{lemma}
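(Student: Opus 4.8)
The plan is to follow the reduction scheme behind Lemma~\ref{lem:lse_additive_error}, but to exploit the special error structure of model~(\ref{eqn:reg_multi_model}), which collapses everything to a single empirical process indexed by the symmetric differences $C\Delta C_0$. Since $\widehat{C}_n$ is optimal for~(\ref{def:lse_reg_multi}) and $Y_i=2af_{C_0}(X_i)+\xi_i$ with $f_C=2\bm{1}_C-1$, expanding $\sum_i(Y_i-2af_{\widehat{C}_n}(X_i))^2\le\sum_i(Y_i-2af_{C_0}(X_i))^2$, cancelling $\sum_i\xi_i^2$, dividing by $4a>0$, and using $(f_{\widehat{C}_n}-f_{C_0})^2=4\bm{1}_{\widehat{C}_n\Delta C_0}$ together with $f_{\widehat{C}_n}-f_{C_0}=2(\bm{1}_{\widehat{C}_n}-\bm{1}_{C_0})$ gives the basic inequality $2a\cdot\frac1n\sum_i\bm{1}_{\widehat{C}_n\Delta C_0}(X_i)\le\frac1n\sum_i\xi_i(\bm{1}_{\widehat{C}_n}-\bm{1}_{C_0})(X_i)$. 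The key observation is the pointwise identity $(2\bm{1}_{C_0}-1)(\bm{1}_C-\bm{1}_{C_0})=-\bm{1}_{C\Delta C_0}$ (a case check on $C\cap C_0$, $C\setminus C_0$, $C_0\setminus C$, $(C\cup C_0)^{c}$), which yields $\xi_i(\bm{1}_C-\bm{1}_{C_0})(X_i)=-(\eta_i-2a)\bm{1}_{C\Delta C_0}(X_i)$ for every $C$; taking $C=\widehat{C}_n$ the basic inequality collapses to $\frac1n\sum_i\eta_i\bm{1}_{\widehat{C}_n\Delta C_0}(X_i)\le0$. Since for \emph{fixed} $C$ one has $\E[\eta_1\bm{1}_{C\Delta C_0}(X_1)]=2a\,P(C\Delta C_0)$ (because $\eta_1$ is independent of $X_1$ with $\E\eta_1=2a$), this entails that for \emph{any} subfamily $\mathscr{A}\subseteq\mathscr{C}$, on the event $\{\widehat{C}_n\in\mathscr{A}\}$,
\[
2a\,P\abs{\widehat{C}_n\Delta C_0}\ \le\ \sup_{C\in\mathscr{A}}\,\bigabs{(\Prob_n-\E)\bigl[\eta\,\bm{1}_{C\Delta C_0}\bigr]}.
\]

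The second part is to control this process via the two hypotheses and then peel. By symmetrization and the contraction principle ($\abs{\eta_i}=1$), the expectation of $\sup_{C:\,P\abs{C\Delta C_0}\le\delta^2}\bigabs{(\Prob_n-\E)[\eta\bm{1}_{C\Delta C_0}]}$ is at most a constant times $n^{-1/2}\,\E\sup_{C:\,P\abs{C\Delta C_0}\le\delta^2}\bigabs{n^{-1/2}\sum_i\epsilon_i\bm{1}_{C\Delta C_0}(X_i)}$; splitting $\bm{1}_{C\Delta C_0}=(\bm{1}_C-\bm{1}_{C_0})-2(\bm{1}_{C\cap C_0}-\bm{1}_{C_0})$ and noting $\abs{\bm{1}_{C\cap C_0}-\bm{1}_{C_0}}\le\bm{1}_{C\Delta C_0}$ (so the second summand is localized at radius $\delta$ as well) bounds this by the two suprema appearing in the statement, hence by $\lesssim n^{-1/2}\phi_n(\delta)$. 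Now peel over the dyadic shells $S_j=\{C:\,2^{j-1}\delta_n^2<P\abs{C\Delta C_0}\le2^j\delta_n^2\}$, only finitely many of which are nonempty since $P\abs{C\Delta C_0}\le1$. On $\{\widehat{C}_n\in S_j\}$ the displayed inequality forces $\sup_{C\in S_j}\bigabs{(\Prob_n-\E)[\eta\bm{1}_{C\Delta C_0}]}>2a\cdot2^{j-1}\delta_n^2$, while the bound just obtained, the monotonicity of $\phi_n(\delta)/\delta$, and $\phi_n(\delta_n)\le\sqrt n\delta_n^2$ give $\E\sup_{C\in S_j}\bigabs{(\Prob_n-\E)[\eta\bm{1}_{C\Delta C_0}]}\lesssim2^{j/2}\delta_n^2$, which drops below half the threshold once $j$ exceeds a universal $j_0$. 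For such $j$, Talagrand's inequality (cf.\ Appendix~\ref{section:tools}) applied to the class $\{\eta\bm{1}_{C\Delta C_0}:C\in S_j\}$ (uniformly bounded by $1$, variances $\le2^j\delta_n^2$) yields $\Prob(\widehat{C}_n\in S_j)\le\exp(-c\,n2^j\delta_n^2)\le\exp(-c\,2^j)$, using $n\delta_n^2\ge1$ (guaranteed by $\delta_n\ge n^{-1/2}$; in this model the $\xi_i$ are bounded, so the remaining quantities in the stated lower bound for $\delta_n$ are $O(1)$ and the binding constraint is $\delta_n\gtrsim n^{-1/2}$). Summing $\E[P\abs{\widehat{C}_n\Delta C_0}\bm{1}_{\widehat{C}_n\in S_j}]\le2^j\delta_n^2\,\Prob(\widehat{C}_n\in S_j)$ over all $j$, bounding the finitely many terms with $j<j_0$ trivially, and adding the $\le\delta_n^2$ contribution of $\{P\abs{\widehat{C}_n\Delta C_0}\le\delta_n^2\}$, gives $\E_{C_0}P\abs{\widehat{C}_n\Delta C_0}=\mathcal{O}(\delta_n^2)$.

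The one genuinely delicate step is the last: a crude Markov estimate for $\Prob(\widehat{C}_n\in S_j)$ only produces an $\mathcal{O}(\delta_n)$ expectation, so extracting the exponential tail from Talagrand's inequality—legitimate precisely because $\eta\bm{1}_{C\Delta C_0}$ is bounded and has variance $P\abs{C\Delta C_0}\le2^j\delta_n^2$ on $S_j$—is what makes the sharp $\delta_n^2$ rate go through. By contrast, the $X$-dependence of the errors, which is the feature distinguishing this lemma from Lemma~\ref{lem:lse_additive_error}, costs nothing: it is absorbed entirely by the algebraic identity for $(2\bm{1}_{C_0}-1)(\bm{1}_C-\bm{1}_{C_0})$.
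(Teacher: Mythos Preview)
Your proof is correct, but it proceeds differently from the paper's argument. The paper does not rederive a peeling scheme here at all: it simply observes that $\E[\xi_1\mid X_1]=0$ and $\E[\xi_1^2\mid X_1]\lesssim 1$, so Lemma~\ref{lem:lse_additive_error} applies verbatim once its hypothesis~(\ref{ineq:ep_additive_error}) on the $\xi$-weighted process is verified. That verification is done by symmetrization, contraction in the bounded multiplier $\eta_i-2a$, and then the same algebraic identity $(\bm{1}_C-\bm{1}_{C_0})f_{C_0}=2(\bm{1}_{C\cap C_0}-\bm{1}_{C_0})-(\bm{1}_C-\bm{1}_{C_0})$ you use. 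The tail control is inherited from Lemma~\ref{lem:lse_additive_error}, which relies on fourth-moment Chebyshev (giving a $t^{-4}$ tail) rather than Talagrand.

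Your route is more self-contained and exploits the model more aggressively: the observation $(2\bm{1}_{C_0}-1)(\bm{1}_C-\bm{1}_{C_0})=-\bm{1}_{C\Delta C_0}$ collapses the basic inequality to $\Prob_n[\eta\,\bm{1}_{\widehat{C}_n\Delta C_0}]\le 0$, which is cleaner than the generic decomposition in Lemma~\ref{lem:lse_additive_error}. You then peel and invoke Talagrand, obtaining exponential rather than polynomial tails. Your remark that first-moment Markov yields only $\mathcal{O}(\delta_n)$ is accurate and explains why the paper needs fourth moments while you need concentration. The trade-off is modularity: the paper's proof is two lines long because all the work is already packaged in Lemma~\ref{lem:lse_additive_error}, whereas your argument reproves the peeling machinery but with a sharper, model-specific tail.
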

\begin{proof}
	See Appendix \ref{section:proof_lemma}.
\end{proof}

\begin{proof}[Proof of Theorem \ref{thm:tsybakov_example}]
	The proof follows by Lemma \ref{lem:lse_dependent_bounded_error} and similar arguments as in the proof of Theorem \ref{thm:rate_optimal_lse_reg}.
\end{proof}

\subsection{Proof of Theorem \ref{thm:excess_risk_binary}}
We need some further notations to implement this program. For any $g \in \mathcal{G}$, write $f_g(x,y)\equiv \bm{1}_{y\neq g(x)}$. Let $\mathcal{G}(\delta)\equiv \{g \in \mathcal{G}: \mathcal{E}_P(g)\leq \delta\}$. Let $\ell$ be the smallest integer such that $r_n^2 2^\ell\geq 1$, and for any $1\leq j\leq \ell$, let $\mathcal{F}_j\equiv \{f_{g_1}-f_{g_2}: g_1,g_2 \in \mathcal{G}(r_n^2 2^j)\}$. 
\begin{lemma}\label{lem:excess_risk_estimate}
	Suppose $\mathcal{G}\equiv \{\bm{1}_C:C \in \mathscr{C}\}$ satisfies the same entropy condition as in Theorem \ref{thm:excess_risk_binary}. Then 
	\begin{align*}
	\Prob\bigg(\max_{1\leq j\leq \ell} \frac{\sup_{f \in \mathcal{F}_j} \abs{\Prob_n(f)-P(f)}  }{r_n^2 2^j}\geq c\bigg(\frac{1}{4}+K\sqrt{\frac{s}{nr_n^2} } +K\frac{s}{nr_n^2}\bigg) \bigg)\leq K' \exp(-s/K')
	\end{align*}
	holds for some constants $K, K'>0$ provided $r_n^2\cdot  n^{1/(\alpha+1)}\geq K''$ for a large enough constant $K''>0$ depending on $c>0$ in (\ref{ineq:margin_cond}) only. 
\end{lemma}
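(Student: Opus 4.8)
The plan is to reduce the quantity $\sup_{f \in \mathcal{F}_j}\abs{\Prob_n(f)-P(f)}$ to an empirical process indexed by a class of measurable sets, to which Theorem \ref{thm:upper_bound_ep_set} applies, and then to upgrade the resulting bound on the expected supremum to a high-probability bound via Talagrand's concentration inequality, finally taking a union bound over the $\ell = O(\log n)$ scales. The first step is a geometric one: for classifiers $g_1,g_2$ built from sets $C_1,C_2 \in \mathscr{C}$ (via $g_i = \bm{1}_{C_i}$), the function $f_{g_1}-f_{g_2} = \bm{1}_{y\neq g_1(x)}-\bm{1}_{y\neq g_2(x)}$ depends on $x$ only through $\bm{1}_{C_1\Delta C_2}(x)$ — indeed $\abs{f_{g_1}-f_{g_2}}\leq \bm{1}_{C_1\Delta C_2}(x)$ pointwise, and $\bm{1}_{C_1\Delta C_2} = \bm{1}_{C_1}+\bm{1}_{C_2}-2\bm{1}_{C_1\cap C_2}$, so that $\mathcal{F}_j$ is contained (up to sign and a bounded linear combination) in indicators of sets from the class $\mathscr{C}\sqcup\mathscr{C}$ of pairwise unions/intersections, which still satisfies an entropy bound of the form $\log \mathcal{N}_I(\epsilon, \cdot, P)\lesssim \epsilon^{-\alpha}$ (the bracketing entropy of Boolean combinations of a fixed number of classes is controlled by that of the original class, up to constants). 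Moreover the margin condition (\ref{ineq:margin_cond}) gives $\Pi(g_1(X)\neq g_0(X))\leq \mathcal{E}_P(g_1)/c$, so on $\mathcal{G}(r_n^2 2^j)$ we have $\pnorm{f_{g_i}-f_{g_0}}{L_2(P)}^2 \leq r_n^2 2^j/c$, hence by the triangle inequality $\pnorm{f}{L_2(P)}^2 \lesssim r_n^2 2^j$ for every $f\in\mathcal{F}_j$; this is what provides the localization radius $\sigma_j \asymp r_n\sqrt{2^j}$.

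Second, I would apply Theorem \ref{thm:upper_bound_ep_set} (in fact only the upper bound part) to the set-indexed empirical process with localization radius $\sigma_j$, noting that $\sigma_j^2 \gtrsim r_n^2 \gtrsim n^{-1/(\alpha+1)}$ by the standing hypothesis $r_n^2 n^{1/(\alpha+1)}\geq K''$, so the theorem is in force. This yields
\[
\E\sup_{f\in\mathcal{F}_j}\abs{\G_n(f)} \lesssim_\alpha \max\{\sigma_j^{1-\alpha}, n^{(\alpha-1)/2(\alpha+1)}\}.
\]
When $\alpha>1$ the second term dominates as long as $\sigma_j \gtrsim n^{-1/2(\alpha+1)}$, which holds here; when $0<\alpha<1$ the first term dominates and equals $\sigma_j^{1-\alpha}$. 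In either case one checks that $\E\sup_{f\in\mathcal{F}_j}\abs{\G_n(f)} \lesssim \sqrt{n}\,(r_n^2 2^j)$ precisely because $r_n^2 \gtrsim n^{-1/(\alpha+1)}$ — this is the algebraic heart of why the threshold $r_n^2 \geq K'' n^{-1/(\alpha+1)}$ is the right one, and it is exactly analogous to the computation in the proof of Theorem \ref{thm:rate_optimal_lse_reg}. Dividing by $r_n^2 2^j$ and translating $\abs{\Prob_n(f)-P(f)} = n^{-1/2}\abs{\G_n(f)}$, the expected value of $\sup_{f\in\mathcal{F}_j}\abs{\Prob_n(f)-P(f)}/(r_n^2 2^j)$ is bounded by a small constant, which after tuning $K''$ can be made $\leq c/4$; this is where the factor $c/4$ in the statement comes from.

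Third, to pass from expectation to the stated exponential tail I would invoke Talagrand's inequality for the empirical process (Appendix \ref{section:tools}) applied to the class $\mathcal{F}_j$, which has envelope $1$ and weak variance $\sigma_j^2 \asymp r_n^2 2^j$: with probability at least $1 - e^{-s}$,
\[
\sup_{f\in\mathcal{F}_j}\abs{\Prob_n(f)-P(f)} \lesssim \E\sup_{f\in\mathcal{F}_j}\abs{\Prob_n(f)-P(f)} + \sqrt{\frac{\sigma_j^2 s}{n}} + \frac{s}{n}.
\]
Dividing through by $r_n^2 2^j$ and using $\sigma_j^2 \asymp r_n^2 2^j \geq r_n^2$ converts the two fluctuation terms into $K\sqrt{s/(nr_n^2)}$ and $K s/(nr_n^2)$ respectively (monotonicity in $j$ is used to reduce all scales to the worst one, $j$ with $r_n^2 2^j$ smallest, i.e. $j=1$; here one must be a little careful, as $\sqrt{\sigma_j^2 s/n}/(r_n^2 2^j) = \sqrt{s/(n r_n^2 2^j)}$ is largest at $j=1$, and likewise $s/(n r_n^2 2^j)$ is largest at $j=1$). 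A union bound over $j=1,\dots,\ell$ with $\ell \lesssim \log(1/r_n^2) \lesssim \log n$ costs only a factor $\ell$ in front of $e^{-s}$, which is absorbed into $K'$ after a harmless adjustment replacing $s$ by $s + \log\ell$ — or, more cleanly, one runs the union bound at level $s$ and notes $\ell e^{-s} \leq K' e^{-s/K'}$ for suitable $K'$ since $\ell$ is polylogarithmic.

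The main obstacle I anticipate is not any single step but the bookkeeping in the first step: verifying cleanly that $\mathcal{F}_j$ inherits the right entropy and localization properties from $\mathscr{C}$. The subtlety is that $\mathcal{F}_j$ lives on $(x,y)$-space, not $x$-space, and its members are differences of indicators rather than indicators; one needs the pointwise domination $\abs{f_{g_1}-f_{g_2}}\leq \bm{1}_{C_1\Delta C_2}$ together with the identity $(f_{g_1}-f_{g_2})^2 = \bm{1}_{C_1\Delta C_2}\cdot(\text{something bounded})$ to see that the $L_2(P)$ modulus of $\mathcal{F}_j$ is governed by the $L_1(\Pi)$ size of symmetric differences, and that bracketing numbers transfer. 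Handling the class of symmetric differences $\{C_1\Delta C_2 : C_1,C_2\in\mathscr{C}\}$ — showing its bracketing entropy under $P$ is still $\lesssim \epsilon^{-\alpha}$ — is routine (a bracket for $C_i$ induces a bracket for $C_1\Delta C_2$ with comparable $L_1$ width, at the cost of squaring the number of brackets, which only doubles the exponent's constant), but it is the place where an error would most easily creep in. Everything downstream is a mechanical combination of Theorem \ref{thm:upper_bound_ep_set}, Talagrand's inequality, and a union bound.
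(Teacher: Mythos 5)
Your overall strategy is the same as the paper's: identify $\mathcal{F}_j$ with (differences of) indicators of sets, transfer the bracketing entropy and the margin-condition localization to that set class, apply Theorem \ref{thm:upper_bound_ep_set}, check that $r_n^2 n^{1/(\alpha+1)}\geq K''$ makes the normalized expectation at most $c/4$, and finish with Talagrand plus a union bound over the $\ell$ shells. Two points differ. First, on the reduction: rather than passing through symmetric differences $C_1\Delta C_2$ in $x$-space and Boolean combinations of $\mathscr{C}$, the paper simply writes $f_g=\bm{1}_S$ with $S=\{(x,y):y\neq g(x)\}$ and observes $P(S_1\Delta S_2)=P(f_{g_1}-f_{g_2})^2\leq P(g_1-g_2)^2=P(C_1\Delta C_2)$, so $\mathcal{N}_I(\epsilon,\mathscr{S},P)\leq\mathcal{N}_I(\epsilon,\mathscr{C},P)$ and the margin condition localizes $P(S\Delta S_0)\leq c^{-1}r_n^2 2^j$ directly. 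This avoids the bookkeeping you (rightly) worry about: your pointwise domination $\abs{f_{g_1}-f_{g_2}}\leq\bm{1}_{C_1\Delta C_2}$ does not by itself control the centered process, since $f_{g_1}-f_{g_2}$ is $\{-1,0,1\}$-valued on $C_1\Delta C_2$ with a $y$-dependent sign; to make your route rigorous you would still have to decompose it as a difference of two indicators of sets in $(x,y)$-space, which is exactly the paper's $\mathscr{S}$.

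Second, and this is the one genuine gap: your union bound at a common level $s$ over $\ell\asymp\log(1/r_n^2)$ shells gives $\ell\,Ke^{-s/K}$, which is \emph{not} of the form $K'e^{-s/K'}$ with $K'$ independent of $n$ (for fixed $s$ and $n\to\infty$ the left side diverges while the right side is constant). Replacing $s$ by $s+\log\ell$ proves a weaker lemma than stated, with an extra $\log\log n$ inside the fluctuation terms. The paper's fix is to take the scale-dependent levels $s_j=s2^j$: then $\sum_{j=1}^\ell Ke^{-s2^j/K}\leq K'e^{-s/K'}$ by geometric decay, and — a pleasant bonus — the normalized Talagrand fluctuations become exactly $j$-independent, since $\sqrt{\sigma_j^2 s_j}/(\sqrt{n}\,r_n^2 2^j)=\sqrt{s/(nr_n^2)}$ and $s_j/(n r_n^2 2^j)=s/(nr_n^2)$ when $\sigma_j^2\asymp r_n^2 2^j$. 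This is a one-line repair, and it is harmless in the downstream application (where $s\asymp nr_n^2\gg\log\log n$), but it is needed to obtain the lemma exactly as stated.
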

\begin{proof}[Proof of Lemma \ref{lem:excess_risk_estimate}]
	By Talagrand's concentration inequality (cf. Appendix \ref{section:tools}), with $\sigma_j^2 \equiv \sup_{f \in \mathcal{F}_j} \pnorm{f}{L_2(P)}^2$,
	\begin{align*}
	\Prob\bigg[\sup_{f \in \mathcal{F}_j} \abs{\G_n(f)}\geq K\bigg(\E \sup_{f \in \mathcal{F}_j} \abs{\G_n(f)}+\sqrt{\sigma_j^2 s_j}+\frac{s_j}{\sqrt{n}} \bigg) \bigg]\leq K\exp(-s_j/K).
	\end{align*}
	Let $\mathscr{S}\equiv \{S: f_g=\bm{1}_S, g \in \mathcal{G}\}$. Note that for $g_1=\bm{1}_{C_1},g_2=\bm{1}_{C_2} \in \mathcal{G}$,  where $C_1,C_2 \in \mathscr{C}$, we have $f_{g_1}=\bm{1}_{S_1},f_{g_2}=\bm{1}_{S_2}$, and hence
	\begin{align*}
	P(S_1\Delta S_2)=P(f_{g_1}-f_{g_2})^2\leq P(g_1-g_2)^2=P(C_1\Delta C_2).
	\end{align*}
	This shows that $\mathcal{N}_{I}(\epsilon, \mathscr{S},P)\leq \mathcal{N}_I(\epsilon, \mathscr{C},P)$. Furthermore, for any $g \in \mathcal{G}(r_n^22^j)$, let $S \in \mathscr{S}$ be such that $f_g=\bm{1}_S$. Then similar to the above display, we have
	\begin{align*}
	P(S\Delta S_0)\leq \pnorm{g-g_0}{L_2(P)}^2\leq c^{-1} r_n^22^j,
	\end{align*}
	where the last inequality follows from the margin condition. Now by Theorem \ref{thm:upper_bound_ep_set}, we obtain
	\begin{align*}
	\E \sup_{f \in \mathcal{F}_j} \abs{\G_n(f)}&\lesssim \E \sup_{g \in \mathcal{G}(r_n^22^j)} \abs{\G_n(f_g)}\leq \E \sup_{S \in \mathscr{S}: P(S\Delta S_0)\leq c^{-1} r_n^2 2^j} \abs{\G_n(S)}\\
	&\lesssim \max\{(r_n^2 2^j)^{(1-\alpha)/2}, n^{(\alpha-1)/2(\alpha+1)}\}.
	\end{align*}
	On the other hand, 
	\begin{align*}
	\sigma_j^2 &\equiv \sup_{f \in \mathcal{F}_j} \pnorm{f}{L_2(P)}^2=\sup_{g_1, g_2 \in \mathcal{G}(r_n^22^j)} \pnorm{f_{g_1}-f_{g_2}}{L_2(P)}^2\\
	&\leq 4\sup_{g \in \mathcal{G}(r_n^22^j)} \pnorm{g-g_0}{L_2(P)}^2\leq 4c^{-1} \sup_{g \in \mathcal{G}(r_n^22^j)}\mathcal{E}_P(g)\leq 4c^{-1} r_n^22^j.
	\end{align*}
	This implies that with $s_j=s2^j$,
	\begin{align*}
	&\Prob\bigg[ \frac{\sup_{f \in \mathcal{F}_j} \abs{\Prob_n(f)-P(f)}}{r_n^2 2^j} \geq K_c r_n^{-2}2^{-j}\bigg( \max\big\{n^{-1/2}(r_n^2 2^j)^{(1-\alpha)/2}, n^{-1/(\alpha+1)}\big\}\\
	&\qquad\qquad\qquad\qquad\qquad +n^{-1/2} (r_n^22^j)^{1/2} \sqrt{s}2^{j/2}+n^{-1} s 2^j\bigg)\bigg]\leq K\exp(-s2^j/K).
	\end{align*}
	Note that
	\begin{align*}
	&r_n^{-2}2^{-j}\left( \max\big\{n^{-1/2}(r_n^2 2^j)^{(1-\alpha)/2}, n^{-1/(\alpha+1)}\big\} +n^{-1/2} (r_n^2 2^j)^{1/2} \sqrt{s}2^{j/2}+n^{-1} s 2^j\right)\\
	&\leq \max\left\{\frac{1}{\sqrt{n}r_n^{\alpha+1}},\frac{1}{r_n^2 n^{1/(\alpha+1)}}\right\}+\sqrt{\frac{s}{nr_n^2} } +\frac{s}{nr_n^2} \leq \frac{c}{4K_c} +\sqrt{\frac{s}{nr_n^2} } +\frac{s}{nr_n^2} 
	\end{align*}
	under the assumption. Now a union bound leads to the desired claim.
\end{proof}

\begin{proof}[Proof of Theorem \ref{thm:excess_risk_binary}]
	Given the estimate in Lemma \ref{lem:excess_risk_estimate}, the proof of the theorem closely follows that of \cite[Theorem 7.1]{gine2006concentration}. We provide some details for the convenience of the reader. On the event 
	\begin{align*}
	E \equiv \left\{\max_{1\leq j\leq \ell} \frac{\sup_{f \in \mathcal{F}_j} \abs{\Prob_n(f)-P(f)}  }{r_n^2 2^j}\leq c\left(\frac{1}{4}+K\sqrt{\frac{s}{nr_n^2} } +K\frac{s}{nr_n^2}\right)  \right\},
	\end{align*}
	we have for any $g \in \mathcal{G}(r_n^22^j)\setminus \mathcal{G}(r_n^22^{j-1})$ and $g' \in \mathcal{G}(\sigma)$ for some $0<\sigma<r_n^2 2^j$,
	\begin{align*}
	\mathcal{E}_P(g) &= P (f_g-f_{g'})+\big[P(f_{g'})-Pf_{g_0}\big]\leq P (f_g-f_{g'})+\sigma\\
	&\leq \Prob_n (f_g-f_{g'})+\sigma+ \sup_{f \in \mathcal{F}_j}\abs{(\Prob_n-P)(f)}\\
	&\leq \mathcal{E}_{\Prob_n}(g)+\sigma+ c\left(\frac{1}{4}+K\sqrt{\frac{s}{nr_n^2} } +K\frac{s}{nr_n^2} \right) r_n^2 2^j\\
	&\leq \mathcal{E}_{\Prob_n}(g)+\sigma+ \left(\frac{1}{4}+K\sqrt{\frac{s}{nr_n^2} } +K\frac{s}{nr_n^2} \right)2\mathcal{E}_P(g).
	\end{align*}
	Since $\sigma>0$ is taken arbitrarily, we see that on the event $E$, it holds that
	\begin{align}\label{ineq:excess_risk_binary_1}
	\frac{\mathcal{E}_{\Prob_n}(g)}{\mathcal{E}_P(g)}\geq 1-\left(\frac{1}{2}+2K\sqrt{\frac{s}{nr_n^2} } +2K\frac{s}{nr_n^2}\right)
	\end{align}
	for all $g \in \mathcal{G}$ such that $\mathcal{E}_P(g)\geq r_n^2$. Furthermore, the above display entails that on the event $E$, we necessarily have $\mathcal{E}_P(\widehat{g}_n)<r_n^2$ for $n$ large enough. Hence for any $g \in \mathcal{G}(r_n^22^j)\setminus \mathcal{G}(r_n^2 2^{j-1})$, we have
	\begin{align*}
	\mathcal{E}_{\Prob_n}(g)&=\Prob_n(f_g)-\Prob_n(f_{\widehat{g}_n})\leq Pf_g-Pf_{\widehat{g}_n}+\sup_{f \in \mathcal{F}_j}\abs{(\Prob_n-P)(f)}\\
	&\leq \mathcal{E}_{P}(g)+ \left(\frac{1}{4}+K\sqrt{\frac{s}{nr_n^2} } +K\frac{s}{nr_n^2} \right)2\mathcal{E}_P(g).
	\end{align*}
	This entails that
	\begin{align}\label{ineq:excess_risk_binary_2}
	\frac{\mathcal{E}_{\Prob_n}(g)}{\mathcal{E}_P(g)}\leq  1+\left(\frac{1}{2}+2K\sqrt{\frac{s}{nr_n^2} } +2K\frac{s}{nr_n^2}\right).
	\end{align}
	The proof of the claim is complete by combining (\ref{ineq:excess_risk_binary_1})-(\ref{ineq:excess_risk_binary_2}) along with Lemma \ref{lem:excess_risk_estimate}.
\end{proof}

\subsection{Proof of Theorem \ref{thm:iso_reg}}

\begin{lemma}\label{lem:trunc_iso_reg}
	It holds for $C>0$ large enough that
	\begin{align*}
	\E_{f_0}\pnorm{\widehat{f}_n-f_0}{L_2(P)}^2&\leq \E_{f_0}\pnorm{\widehat{f}_n-f_0}{L_2(P)}^2\bm{1}_{\pnorm{\widehat{f}_n-f_0}{\infty}\leq C\sqrt{\log n}}+\mathcal{O}(n^{-1}).
	\end{align*}
	The $\mathcal{O}$ term is uniform in $f_0 \in \mathcal{M}_d\cap L_\infty(1)$.
\end{lemma}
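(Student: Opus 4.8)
The plan is to show that restricting to the event $\{\pnorm{\widehat{f}_n-f_0}{\infty}\leq C\sqrt{\log n}\}$ discards only a negligible part of the risk. Write
\begin{align*}
\E_{f_0}\pnorm{\widehat{f}_n-f_0}{L_2(P)}^2
&=\E_{f_0}\pnorm{\widehat{f}_n-f_0}{L_2(P)}^2\bm{1}_{\pnorm{\widehat{f}_n-f_0}{\infty}\leq C\sqrt{\log n}}\\
&\quad+\E_{f_0}\pnorm{\widehat{f}_n-f_0}{L_2(P)}^2\bm{1}_{\pnorm{\widehat{f}_n-f_0}{\infty}> C\sqrt{\log n}},
\end{align*}
and denote by $R_n$ the last term; the goal is $R_n=\mathcal{O}(n^{-1})$ uniformly over $f_0\in\mathcal{M}_d\cap L_\infty(1)$. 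By Cauchy--Schwarz,
\begin{align*}
R_n\leq \big(\E_{f_0}\pnorm{\widehat{f}_n-f_0}{L_2(P)}^4\big)^{1/2}\cdot \Prob_{f_0}\big(\pnorm{\widehat{f}_n-f_0}{\infty}> C\sqrt{\log n}\big)^{1/2},
\end{align*}
so it suffices to bound the fourth moment of the loss by a power of $\log n$, and to show that the exceptional probability decays faster than any polynomial in $n$ once $C$ is large.

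The structural input for both bounds is the elementary fact that the fitted values of the isotonic LSE lie in the range of the responses, i.e.
\begin{align*}
\min_{1\leq j\leq n}Y_j\leq \widehat{f}_n\leq \max_{1\leq j\leq n}Y_j\quad\text{on }[0,1]^d,
\end{align*}
where $\widehat{f}_n$ is taken to be the piecewise-constant interpolant of its fitted values. This follows either from the min--max representation of the isotonic LSE over upper and lower sets (cf. \cite{robertson1988order}), or directly: replacing any fitted block value by its truncation to $[\min_j Y_j,\max_j Y_j]$ keeps the function in $\mathcal{M}_d$ and does not increase $\sum_i(Y_i-\widehat{f}_n(X_i))^2$. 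Since $P$ is uniform on $[0,1]^d$, combining this with $\pnorm{f_0}{\infty}\leq 1$ gives the pointwise domination
\begin{align*}
\pnorm{\widehat{f}_n-f_0}{L_2(P)}^2\leq \pnorm{\widehat{f}_n-f_0}{\infty}^2\leq \big(2+\max_{1\leq j\leq n}\abs{\xi_j}\big)^2.
\end{align*}
Hence $\E_{f_0}\pnorm{\widehat{f}_n-f_0}{L_2(P)}^4\lesssim 1+\E\max_{1\leq j\leq n}\abs{\xi_j}^4\lesssim \log^2 n$ for i.i.d. $\mathcal{N}(0,1)$ errors, and this bound is uniform in $f_0$ since it only uses $\pnorm{f_0}{\infty}\leq 1$.

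For the probability, the same domination yields $\{\pnorm{\widehat{f}_n-f_0}{\infty}> C\sqrt{\log n}\}\subset\{\max_{1\leq j\leq n}\abs{\xi_j}> C\sqrt{\log n}-2\}$, so a union bound with the Gaussian tail gives, for $C$ large,
\begin{align*}
\Prob_{f_0}\big(\pnorm{\widehat{f}_n-f_0}{\infty}> C\sqrt{\log n}\big)\leq 2n\exp\big(-(C\sqrt{\log n}-2)^2/2\big)\leq n^{-6}.
\end{align*}
Plugging the last two displays into the Cauchy--Schwarz bound gives $R_n\lesssim (\log n)\cdot n^{-3}=o(n^{-1})$, which is the claim. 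The only delicate point is the sup-norm sandwich for $\widehat{f}_n$; the remaining estimates are routine Gaussian computations, and none of the implied constants depend on $f_0$ beyond $\pnorm{f_0}{\infty}\leq 1$, giving the asserted uniformity.
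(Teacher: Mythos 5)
Your proof is correct and follows essentially the same route as the paper's: both rest on the sup-norm domination $\pnorm{\widehat{f}_n-f_0}{\infty}\lesssim 2+\max_i\abs{\xi_i}$ followed by Cauchy--Schwarz with the fourth-moment bound $\lesssim\log^2 n$ and the Gaussian tail of the exceptional event. The only difference is that you justify the sandwich $\min_j Y_j\leq\widehat{f}_n\leq\max_j Y_j$ directly via a truncation argument, whereas the paper cites Lemma~10 of the supplement of \cite{han2017isotonic}; your self-contained justification is valid.
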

\begin{proof}
	Fix $f_0 \in \mathcal{M}_d \cap L_\infty(1)$. By \cite[Lemma 10, supplement]{han2017isotonic}, $\sup_{x \in [0,1]^d} (\widehat{f}_n-f_0)(x) \leq \max_{1\leq i\leq n} Y_i +\pnorm{f_0}{\infty}\leq 2+ \max_{1\leq i\leq n} \xi_i$. Hence with $Z_n\equiv  \pnorm{\widehat{f}_n-f_0}{\infty}$, for $u$ large, $\Prob(Z_n>u\sqrt{\log n})\leq e^{-c u^2 \log n}$ for some $c>0$. In particular, $\E Z_n^4 \lesssim \log^2 n$. Now the claim of the lemma follows by noting that
	\begin{align*}
	&\E_{f_0}\pnorm{\widehat{f}_n-f_0}{L_2(P)}^2\bm{1}_{\pnorm{\widehat{f}_n-f_0}{\infty}> C\sqrt{\log n}}\leq \E Z_n^2 \bm{1}_{Z_n>C\sqrt{\log n}}\\
	& \leq \sqrt{\E Z_n^4}\cdot \sqrt{\Prob(Z_n>C\sqrt{\log n})} \leq \log n\cdot e^{-cC^2 \log n/2} = \mathcal{O}(n^{-1})
	\end{align*}
	for $C>0$ large.
\end{proof}

Recall that $B\subset \R^d$ is a lower (resp. upper) set if and only if for all $x\in B, y\in \R^d$ with $y_i\leq x_i$ (resp. $y_i\geq x_i$), $i=1,\ldots,d$, we have $y \in B$. Let $\mathcal{LL}_d$ be the collection of all upper and lower sets in $\R^d$ and $\mathcal{L}_d = \{B\cap [0,1]^d: B \in \mathcal{LL}_d\}$.

\begin{proof}[Proof of Theorem \ref{thm:iso_reg}]
	First consider $d\geq 3$. By Lemma \ref{lem:trunc_iso_reg}, we only need to compute an upper bound for $\E_{f_0}\pnorm{\widehat{f}_n-f_0}{L_2(P)}^2\bm{1}_{\pnorm{\widehat{f}_n-f_0}{\infty}\leq C\sqrt{\log n}}\leq \bar{r}_n^2$. Similar to the proof of Lemma \ref{lem:lse_additive_error}, this can be done by evaluating the size of two empirical processes
	\begin{align}\label{ineq:iso_reg_1}
	&\E \sup_{ \substack{f \in \mathcal{M}_d\cap L_\infty(C \sqrt{\log n}):\\ \pnorm{f-f_0}{L_2(P)}\leq \bar{r}_n}} \abs{\G_n(\xi (f-f_0))} \nonumber \\
	&\qquad \bigvee \E \sup_{ \substack{f \in \mathcal{M}_d\cap L_\infty(C \sqrt{\log n}):\\ \pnorm{f-f_0}{L_2(P)}\leq \bar{r}_n}} \abs{\G_n((f-f_0)^2)}\lesssim \sqrt{n} \bar{r}_n^2.
	\end{align}
	Note that for any $f \in \mathcal{M}_d$, 
	\begin{align*}
	\abs{(\Prob_n - P)f} &= \bigabs{\E_{\Prob_n}f(X)-\E_P f(X)} \\
	&\leq \bigabs{\E_{\Prob_n}f_+(X)-\E_P f_+(X)} + \bigabs{\E_{\Prob_n}f_-(X)-\E_P f_-(X)}\\
	&\leq \biggabs{\int_0^\infty  \big(\Prob_{\Prob_n}(f_+(X)> t)- \Prob_{P}(f_+(X)> t)\big)\ \d{t} }\\
	&\qquad +  \biggabs{\int_0^\infty  \big(\Prob_{\Prob_n}(f_-(X)> t)- \Prob_{P}(f_-(X)> t)\big)\ \d{t} }\\
	&\leq 2 \pnorm{f}{\infty} \sup_{C \in \mathcal{L}_d} \abs{(\Prob_n-P) (C)}.
	\end{align*}
	Here $\mathcal{L}_d$ is the class of all upper and lower sets in $[0,1]^d$. The last inequality follows since for any $f \in \mathcal{M}_d$, $\{f_+(x) >t\}=\{f(x) \vee 0>t\} \in \mathcal{L}_d$ and $\{f_-(x)>t\} = \{-(f(x)\wedge 0)>t\}=\{f(x)\wedge 0<-t\} \in \mathcal{L}_d$. Hence by \cite[Theorem 8.22]{dudley1999uniform}, we may apply Theorem \ref{thm:upper_bound_ep_set} with $\alpha = d-1$ to see that
	\begin{align*}
	\E \sup_{f \in \mathcal{M}_d \cap L_\infty(C\sqrt{\log n})} \abs{\G_n(f-f_0)}\lesssim \sqrt{\log n} \cdot \E \sup_{C \in \mathcal{L}_d} \abs{\G_n(C)} + 1\lesssim \sqrt{\log n}\cdot n^{\frac{d-2}{2d}}.
	\end{align*}
	Using the multiplier inequality (cf. Lemma \ref{lem:multiplier_ineq}) and contraction principle for empirical processes (cf. \cite[Lemma 6, supplement]{han2017isotonic}), we may further bound the two empirical processes in (\ref{ineq:iso_reg_1}) by
	\begin{align*}
	&\E \sup_{ \substack{f \in \mathcal{M}_d\cap L_\infty(C \sqrt{\log n}):\\ \pnorm{f-f_0}{L_2(P)}\leq \bar{r}_n}} \abs{\G_n(\xi (f-f_0))}\\
	&\qquad \bigvee \E \sup_{ \substack{f \in \mathcal{M}_d\cap L_\infty(C \sqrt{\log n}):\\ \pnorm{f-f_0}{L_2(P)}\leq \bar{r}_n}} \abs{\G_n((f-f_0)^2)}\lesssim  n^{\frac{d-2}{2d}} \log n.
	\end{align*}
	Solving (\ref{ineq:iso_reg_1}) using the above inequality we obtain the rate $\bar{r}_n$ for $d\geq 3$.
	
	For $d=2$, we may estimate the empirical process with an additional $\log n$:
	\begin{align*}
	\E \sup_{f \in \mathcal{M}_d \cap L_\infty(C\sqrt{\log n})} \abs{\G_n(f-f_0)}\lesssim \log^{3/2} n,
	\end{align*}
	and the rate can be obtained similarly as above.
\end{proof}

\begin{remark}\label{rmk:isotonic_reg}
	The proof for the analogue of Theorem \ref{thm:iso_reg} in \cite{han2017isotonic}, i.e. \cite[Theorem 4]{han2017isotonic}, uses a completely different strategy. A rough argument is as follows. \cite{han2017isotonic} first consider the problem $f_0=0$, where it is shown in Proposition 9 therein that for $\delta_n>0$ not too small,
	\begin{align}\label{ineq:iso_reg_HWCS_1}
	\E \sup_{f \in \mathcal{M}_d \cap L_\infty(\sqrt{C\log n}): \pnorm{f}{L_2(P)}\leq \delta_n} \abs{\G_n(f)}\lesssim \delta_n \cdot n^{1/2-1/d} \log^\gamma n.
	\end{align}
	Then by a simple triangle inequality, if $d\geq 2$,
	\begin{align}\label{ineq:iso_reg_HWCS_2}
	&\E \sup_{ \substack{ f \in \mathcal{M}_d \cap L_\infty(C\sqrt{\log n}):\\ \pnorm{f-f_0}{L_2(P)}\leq \delta_n}} \abs{\G_n(f-f_0)}\\
	&\leq \E \sup_{ \substack{f \in \mathcal{M}_d\cap L_\infty(C\sqrt{\log n}):\\ \pnorm{f}{L_2(P)}\leq \delta_n+\pnorm{f_0}{\infty}}} \abs{\G_n(f)} + \E\abs{\G_n(f_0)}\nonumber\lesssim (\delta_n+\pnorm{f_0}{\infty})n^{1/2-1/d} \log^{\gamma} n.\nonumber
	\end{align}
	Using the above inequality and (\ref{ineq:iso_reg_1}), we obtain $\bar{r}_n^2 \lesssim n^{-1/d}$ up to logarithmic factors. It is clear from the sketch here that the property of isotonic regression functions is only used in (\ref{ineq:iso_reg_HWCS_1}) where the problem is $f_0=0$. The proof for general $f_0 \in L_\infty(1)$ in (\ref{ineq:iso_reg_HWCS_2}) is not very informative in the sense that the method of (\ref{ineq:iso_reg_HWCS_2}) is valid for any problem as long as one could solve the risk problem (= empirical process problem (\ref{ineq:iso_reg_HWCS_1})) for one particular $f_0$. In contrast, the proof of Theorem \ref{thm:iso_reg} here shows that it is the complexity of the class of upper and lower sets $\mathcal{L}_d$ that leads to the minimax rate of convergence for the multiple isotonic LSE.
\end{remark}

\begin{remark}\label{rmk:conv_reg}
		It is possible to adapt the present approach to the problem of multivariate convex regression. The major difficulty here is to understand the boundary behavior for the convex LSE $\widehat{f}_n^{\mathrm{cvx}}$. In particular, if we can prove that the convex LSE $\widehat{f}_n^{\mathrm{cvx}}$ satisfies $\pnorm{\widehat{f}_n^{\mathrm{cvx}}-f_0}{\infty}=\mathcal{O}_{\mathbf{P}}(L_n)$ for some slowly growing $L_n$ (in similar spirit to Lemma \ref{lem:trunc_iso_reg} for the isotonic LSE), then using similar arguments as in the proof of Theorem \ref{thm:iso_reg}, we may conclude $\pnorm{\widehat{f}_n^{\mathrm{cvx}}-f_0}{L_2(P)}= \mathcal{O}_{\mathbf{P}}(n^{-1/(d+1)}\bar{L}_n)$ for some slowly growing $\bar{L}_n$. Interestingly, recently \cite{kur2020convex} proved that under a fixed lattice design in a polytopal domain, the rate of the convex LSE can be improved to $n^{-2/(d+4)}$ for $d\leq 4$ and $n^{-1/d}$ for $d\geq 5$ (up to logarithmic factors) and these rates cannot be further improved in the worst case. These improved rates are due to reduced complexity of the class of convex functions on polytopal domains than those on smooth domains. Similar rates are obtained in \cite{kur2020convex} for bounded and Lipschitz convex LSEs under random designs. It remains open whether the unconstrained convex LSEs attain these rates over polytopal and more general domains under random designs, where the boundary behavior of $\widehat{f}_n^{\mathrm{cvx}}$ may play a crucial role.
\end{remark}

\subsection{Proof of Theorem \ref{thm:rate_s_concave}}

\begin{proof}[Proof of Theorem \ref{thm:rate_s_concave}]
	We only provide the proof for the most difficult case $-1/d<s<0$; the other cases are similar or simpler. 
	
	\noindent \textbf{(Case 1: $d\geq 4$).} We will relate the squared Hellinger distance $h^2(p_0,\widehat{p}_n)$ to that of the expected supremum of empirical process over the class of convex sets. The proof for this reduction is largely inspired by the idea of \cite{carpenter2018near}.
%	 but differs at a critical step that will be detailed below. 
	
	Using same arguments as in Step 1 of the proof of \cite[Theorem 4.3]{doss2013global}, we may assume without loss of generality that $p_0 \in \mathcal{P}_{s,M/2}$ and $\widehat{p}_n$ belongs to
	\begin{align*}
	\mathcal{P}_{s,M}\equiv \bigg\{p \in \mathcal{P}_s: \sup_{x \in \R^d} p(x)\leq M, \inf_{x \in B(0,1)} p(x)\geq 1/M\bigg\}
	\end{align*}
	for some large $M$ with high probability. By the proof of  \cite[Lemma F.7]{han2015approximation} (especially (F.3) therein),
	\begin{align}\label{ineq:s_concave_0}
	\sup_{p \in \mathcal{P}_{s,M}}p(x)\leq C_{M} (1+\pnorm{x}{})^{1/s}\leq C_{M,d} \bigg(1+\prod_{k=1}^d \abs{x_k}^{1/d}\bigg)^{1/s}.
	\end{align}
	Furthermore, it is not hard to see that $\widehat{p}_n$ is supported in the convex hull of $X_1,\ldots,X_n$. 	By (\ref{ineq:s_concave_0}), $\kappa_q\equiv \E_{X\sim p_0} (1/p_0(X))^q = \int p_0^{1-q}<\infty$ for $q \in (0,1+sd)$. This means that $\E_{X \sim p_0}  \max_{1\leq i\leq n} (1/p_0(X_i))^q \leq n\cdot \kappa_q$, so $\log \max_i (1/p_0(X_i)) \leq C_1 \log n$ with high probability for large $C_1>0$.	Hence with $c_n\equiv n^{-C_1}$, $X_1,\ldots,X_n \in \{p_0\geq c_n\}$ with high probability. 
%	Note that this is achieved in \cite{carpenter2018near} in the context of log-concave (=0-concave) MLE by using volume estimate for log-concave densities (cf. Lemma 8 therein). 
	Let $\widetilde{p}_n\equiv (\widehat{p}_n\vee c_n)\bm{1}_{p_0\geq c_n} / \int (\widehat{p}_n\vee c_n)\bm{1}_{p_0 \geq c_n}$. Then with $b_n \equiv \int (\widehat{p}_n \vee c_n)\bm{1}_{p_0\geq c_n}$, it follows that with high probability
	\begin{align*}
	b_n^{-1} &= b_n^{-1} \int_{p_0\geq c_n} \widehat{p}_n \leq b_n^{-1}\int_{p_0\geq c_n} (\widehat{p}_n \vee c_n)=1,\\
	b_n-1 &= \int_{p_0\geq c_n} \abs{(\widehat{p}_n \vee c_n)-\widehat{p}_n} \leq c_n \abs{\{p_0\geq c_n\}}\lesssim c_n^{1+sd}.
	\end{align*}
	The last inequality in the second line of the above display follows as $\{x: \pnorm{x}{}>(c_n/C_M)^s\}\subset \{x: p_0(x)<c_n\}$ by (\ref{ineq:s_concave_0}), and therefore $\abs{\{p_0\geq c_n\}} \subset \abs{\{x: \pnorm{x}{}\leq (c_n/C_M)^s\}} \asymp_{d, M} c_n^{sd}$. As $s>-1/d$, by choosing $C_1>0$ large, we have $0\leq b_n-1\leq \mathcal{O}(n^{-1})$. This implies with high probability,
	\begin{align}\label{ineq:s_concave_0a}
	h^2(\widehat{p}_n,\widetilde{p}_n)&\lesssim \int \abs{\widehat{p}_n-\widetilde{p}_n} = \int \abs{\widehat{p}_n- (\widehat{p}_n\vee c_n)\bm{1}_{p_0\geq c_n} /b_n} \nonumber\\
	&\leq \abs{1-b_n^{-1}}\int \widehat{p}_n + b_n^{-1} \int_{p_0\geq c_n} \abs{\widehat{p}_n - (\widehat{p}_n\vee c_n)} \nonumber\\
	&= \abs{1-b_n^{-1}} + b_n^{-1} (b_n-1) = \mathcal{O}(n^{-1}).
	\end{align}
	
	On the other hand, let $\widetilde{p}_0\equiv  p_0 \bm{1}_{p_0\geq c_n}/ \int p_0 \bm{1}_{p_0\geq c_n}$ and $b_0\equiv \int p_0 \bm{1}_{p_0\geq c_n} $. As $s>-1/d$, with $q \in (0,1+sd)$, 
	\begin{align*}
	b_0&= \int p_0 \bm{1}_{p_0\geq c_n} \leq 1,\\
	1-b_0& =\int p_0 \bm{1}_{p_0< c_n} \lesssim c_n^q \int (1+\pnorm{x}{})^{(1-q)/s}\,\d{x} =\mathcal{O}(c_n^q). 
	\end{align*}
	This means by choosing $C_1>0$ large, we have $0\leq 1-b_0\leq \mathcal{O}(n^{-1})$. Hence
	\begin{align}\label{ineq:s_concave_0b}
	h^2(p_0,\widetilde{p}_0)& \asymp \int p_0 \bm{1}_{p_0< c_n}+ \int \big(\sqrt{p_0}-\sqrt{p_0/b_0}\big)^2 \bm{1}_{p_0\geq c_n}  \nonumber\\
	& = \mathcal{O}(c_n^q)+ \big(1-b_0^{-1/2}\big)^2 \int p_0 \bm{1}_{p_0\geq c_n} = \mathcal{O}(n^{-1}),
	\end{align}
	and
	\begin{align}\label{ineq:s_concave_0c}
	&\biggabs{\int p_0\log p_0 - \int \widetilde{p}_0 \log \widetilde{p}_0} \nonumber\\
	&\leq \biggabs{\int p_0\log p_0 \bm{1}_{p_0<c_n}}+  \biggabs{\int_{p_0\geq c_n} p_0 \log p_0 - (p_0/b_0)\log (p_0/b_0)} \nonumber \\
	&\leq  \mathcal{O}(c_n^q)+ \bigabs{1-b_0^{-1}} \int_{p_0\geq c_n} \abs{p_0 \log p_0} + \bigabs{\log b_0/b_0} \int_{p_0\geq c_n} p_0 \nonumber \\
	&= \mathcal{O}(n^{-1}),
	\end{align}
	and
	\begin{align}\label{ineq:s_concave_0d}
    \int \abs{p_0-\widetilde{p}_0} \leq \int p_0 \bm{1}_{p_0<c_n}+ \abs{1-b_0^{-1}} \int_{p_0\geq c_n} p_0 = \mathcal{O}(n^{-1}).
	\end{align}
	Now by the integrability $\E_{P_0}\log^2 p_0<\infty$, with $P_0, \widetilde{P}_0$ denoting the distributions of $p_0,\widetilde{p}_0$, it follows that with high probability,
	\begin{align}\label{ineq:s_concave_01}
	h^2(p_0,\widehat{p}_n)	&\lesssim h^2(p_0,\widetilde{p}_0)+ h^2(\widetilde{p}_0,\widetilde{p}_n)+ h^2(\widetilde{p}_n,\widehat{p}_n)  \nonumber \\
	&\leq \E_{\widetilde{P}_0} \log (\widetilde{p}_0/\widetilde{p}_n) + \mathcal{O}(n^{-1}) \quad \hbox{(by (\ref{ineq:s_concave_0a}) and (\ref{ineq:s_concave_0b}))}\nonumber \\
	&\leq \E_{P_0} \log p_0 - \E_{\widetilde{P}_0} \log \tilde{p}_n + \mathcal{O}(n^{-1}) \quad \hbox{(by (\ref{ineq:s_concave_0c}))} \nonumber \\
	&\leq \E_{\Prob_n} \log p_0 - \E_{\widetilde{P}_0}\log \widetilde{p}_n + \mathcal{O}_{\mathbf{P}}(n^{-1/2}) \quad (\textrm{by integrability of }\log p_0)\nonumber \\
	&\leq \E_{\Prob_n} \log \widehat{p}_n - \E_{\widetilde{P}_0}\log \widetilde{p}_n + \mathcal{O}_{\mathbf{P}}(n^{-1/2})\quad  (\textrm{as }\widehat{p}_n\textrm{ is the MLE}) \nonumber\\
	&\leq \E_{\Prob_n} \log \big[b_n^{-1}(\widehat{p}_n \vee c_n)\bm{1}_{p_0\geq c_n}\big] - \E_{\widetilde{P}_0}\log \widetilde{p}_n + \log b_n+\mathcal{O}_{\mathbf{P}}(n^{-1/2})\nonumber\\
	&\leq\abs{ (\Prob_n-\widetilde{P}_0) \log \widetilde{p}_n}+ \mathcal{O}_{\mathbf{P}}(n^{-1/2}\vee \abs{1-b_n})\nonumber\\
	& \leq \biggabs{\int_0^\infty  \big(\Prob_{\Prob_n}((\log \widetilde{p}_n)_+(X)\geq t)- \Prob_{\widetilde{P}_0}((\log \widetilde{p}_n)_+(X)\geq  t)\big)\ \d{t} }\nonumber\\
	&\qquad +  \biggabs{\int_0^\infty  \big(\Prob_{\Prob_n}((\log \widetilde{p}_n)_-(X)\leq t)- \Prob_{\widetilde{P}_0}((\log \widetilde{p}_n)_-(X)\leq t)\big)\ \d{t} }\nonumber\\
	&\qquad + \mathcal{O}_{\mathbf{P}}(n^{-1/2})\nonumber\\
	&\stackrel{(*)}{\lesssim} \log n\cdot \E \sup_{C \in \mathscr{C}_d} \abs{(\Prob_n-\widetilde{P}_0) (C)}+ \mathcal{O}_{\mathbf{P}}(n^{-1/2})\nonumber\\
	&\leq \log n\cdot \bigg[\E \sup_{C \in \mathscr{C}_d} \abs{(\Prob_n-P_0) (C)}+ \sup_{C \in \mathscr{C}_d} \int_C \abs{p_0-\widetilde{p}_0}\bigg]+ \mathcal{O}_{\mathbf{P}}(n^{-1/2})\nonumber\\
	& \stackrel{(\ast\ast)}{\leq} \log n\cdot \E \sup_{C \in \mathscr{C}_d} \abs{(\Prob_n-P_0) (C)}+ \mathcal{O}_{\mathbf{P}}(n^{-1/2}).
	\end{align}
	Here $\mathscr{C}_d $ is the set of all convex bodies in $\R^d$. The inequality ($\ast$) follows as for any $s$-concave density $p$, $\{(\log p(x))_+\geq t\} =\{ \log p(x) \vee 0\geq t \} = \{p(x)\vee 1\geq e^t\}= \{\varphi(x) \wedge 1 \leq e^{st}\}$ and $\{(\log p(x))_-\leq t\} =\{ -\big(\log p(x) \wedge 0\big)\leq t \} = \{p(x)\wedge 1\geq e^{-t}\}= \{\varphi(x) \vee 1 \leq e^{-ts}\}$  are convex sets, and $-C_1\log n\leq \log c_n \leq \log \widetilde{p}_n \leq \log (M)$ for $n$ large. The inequality ($\ast\ast$) follows from (\ref{ineq:s_concave_0d}).

	Hence we only need to bound the entropy $\mathcal{N}_I(\epsilon, \mathscr{C}_d, P_0)$. To this end, for a multi-index $\ell = (\ell_1,\ldots,\ell_d) \in \mathbb{Z}_{\geq 0}^d$, let $I_\ell \equiv \prod_{k=1}^d [2^{\ell_k}-1,2^{\ell_k+1}-1]$. Then $\abs{I_\ell}\asymp 2^{\sum_k \ell_k}$. Let $\{(A_{\ell, j}, B_{\ell,j}):1\leq j\leq N_\ell\}$ be an $\epsilon_{\ell}$-bracket for $\{C|_{I_\ell}:C \in \mathscr{C}\}$ under the Lebesgue measure. By \cite[Theorem 8.25]{dudley1999uniform}, we have $\log N_\ell \lesssim_d (\abs{I_\ell}^{-1}\epsilon_\ell)^{(1-d)/2}$. Let $\epsilon_\ell = a_\ell\cdot \epsilon$, where $a_\ell \equiv \abs{I_\ell}^{(1+\delta)}$ for some $\delta>0$ such that $1<1+\delta<(-sd)^{-1}\in (1,\infty)$. Then $\{(\sum_{\ell} \bm{1}_{A_{\ell, j_\ell}}\bm{1}_{I_\ell}, \sum_{\ell}
	\bm{1}_{B_{\ell,j_\ell}}\bm{1}_{I_\ell}):1\leq j_\ell\leq N_\ell, \ell \in \mathbb{Z}_{\geq 0}^d\}$ forms a bracket for $\mathscr{C}\cap \R_{\geq 0}^d$ with $P_0$-size
	\begin{align*}
	\biggabs{P_0\bigg(\sum_{\ell}
		\bm{1}_{B_{\ell,j_\ell}}\bm{1}_{I_\ell}-\sum_{\ell}
		\bm{1}_{A_{\ell,j_\ell}}\bm{1}_{I_\ell}\bigg)}&\leq \sum_{\ell} \epsilon_\ell \sup_{x \in I_\ell} p_0(x)\lesssim \epsilon \sum_{\ell} a_\ell  \abs{I_\ell}^{-(-sd)^{-1}}\lesssim \epsilon.
	\end{align*}
	The logarithm of the number of the brackets can be bounded by
	\begin{align*}
	C_d \sum_{\ell} \abs{I_\ell}^{(d-1)/2} \epsilon_\ell^{(1-d)/2} \lesssim \epsilon^{(1-d)/2} \sum_{\ell}  \big(a_\ell  \abs{I_\ell}^{-1} \big)^{(1-d)/2}\lesssim \epsilon^{(1-d)/2}.
	\end{align*}
	Other quadrants can be handled similarly. This means that $\log \mathcal{N}_I(\epsilon, \mathscr{C},P_0)\lesssim \epsilon^{(1-d)/2}$, and hence Theorem \ref{thm:upper_bound_ep_set} applies to (\ref{ineq:s_concave_01}). 
	
	\noindent \textbf{(Case 2: $d=3$).}  The situation for $d=3$ is similar; but with an additional $\log n$ term in the estimate for the empirical process $\E \sup_{C \in \mathscr{C}_3} \abs{\G_n(C)} \lesssim \log n$ (cf. Remark \ref{rmk:upper_bound_entropy}), and therefore the rate in squared Hellinger comes with an additional $\log n$.
	
	\noindent \textbf{(Case 3: $d=2$).}  We employ an idea in \cite{doss2013global} in $d=1$, which first calculates the $L_2$ entropy of the class of bounded $s$-concave functions on $[0,1]$ by discretization of the range of the underlying convex functions until a prescribed $L_2$ error is reached at the level of $s$-concave densities, and then use the integrability of $\mathcal{P}_{s,M}$ to extend the brackets from $[0,1]$ to $\R$. Our arguments below substantially simplify those presented in \cite{doss2013global}. A similar idea is exploited in \cite{kim2016global} in $d=2,3$ in the context of log-concave densities, with further technicalities due to the unknown shapes of domain at the truncated levels in dimensions $2$ and $3$ (for $d=1$ they are simply intervals). 
	
	Let $\widetilde{\mathcal{P}}_s(I, B) \equiv \{p\textrm{ is s-concave on }I \subset \R^2: 0\leq p(x)\leq B, \forall x \in I\}$. We write $\widetilde{\mathcal{P}}_s=\widetilde{\mathcal{P}}_s([0,1]^2,1)$ for simplicity. We claim that for $s>-1$,
	\begin{align}\label{ineq:s_concave_1}
	\log \mathcal{N}_{[\,]}(\epsilon, \widetilde{\mathcal{P}}_s, L_2)\lesssim_s \epsilon^{-1}\log(1/\epsilon).
	\end{align}
	Fix $\epsilon>0$. Let $y_k\equiv 2^{k}, 1\leq k\leq k_0$, where $k_0$ is the smallest integer such that $y_{k_0}^{1/s}\leq \epsilon$, i.e. $k_0 = \lceil \log_2((1/\epsilon)^{-s})\rceil$. Let $\{(A_j,B_j): A_j \supset B_j\}_{j=1}^{N_1}$ be an $\epsilon_0^2$-bracket for all convex sets in $[0,1]^2$ under the Lebesgue measure. By \cite[Theorem 8.25, Corollary 8.26]{dudley1999uniform}, we have $\log N_1 \lesssim \epsilon_0^{-1}$. By \cite[Proposition 4, supplement]{kim2016global}, for each $j=1,\ldots,N_1$, and $k=1,\ldots,k_0$, we may find a lower $\epsilon_{j,k}$-bracket $\{\underline{f}_{j,k,m}:1\leq m\leq \underline{N}_{j,k}\}$ in $L_2$ (resp. upper $\epsilon_{j,k}$-bracket $\{\bar{f}_{j,k,m}:1\leq m\leq \bar{N}_{j,k}\}$ in $L_2$) for non-negative convex functions defined on $B_j$ with an upper bound $2^k$, such that $\log (\bar{N}_{j,k}\vee \underline{N}_{j,k})\lesssim (2^k/\epsilon_{j,k}) \log(2^k/\epsilon_{j,k})$. 
	
	For any $p \in \widetilde{\mathcal{P}}_s$, let $\varphi = p^s$ be the underlying convex function. Let $C_k\equiv \{\varphi \leq y_k\}$. 
	Let $(A_{j_k},B_{j_k}), A_{j_k}\supset B_{j_k}$ be a bracket for $A_{j_k}\supset C_k\supset B_{j_k}$, and let $\underline{f}_{j_k,k,m}$ (resp. $\bar{f}_{j_k,k,m}$) be a lower (resp. upper) bracket for ${\varphi}|_{B_{j_k}}$.
	
	Let $A_{j_0}\equiv \emptyset $ and $y_0\equiv  1$. Consider an upper bracket for $p$ of form 
	\begin{align*}
	\sum_{k=1}^{k_0} \bigg[ \big(\underline{f}_{j_k,k,m}\vee y_{k-1}\big)\bm{1}_{B_{j_k}\setminus A_{j_{k-1}}}\bigg]^{1/s}+ \sum_{k=1}^{k_0} \big[(y_{k-1})^{1/s}\wedge 1\big] \bm{1}_{A_{j_k}\setminus B_{j_k}}+\epsilon \bm{1}_{[0,1]^2\setminus A_{j_{k_0}}},
	\end{align*}
	and a lower bracket of $p$ of form $
	\sum_{k=1}^{k_0} \big[ \big(\bar{f}_{j_k,k,m}\wedge y_{k}\big)\bm{1}_{B_{j_k}\setminus A_{j_{k-1}}}\big]^{1/s}$. For the choice $
	\epsilon_0\equiv \epsilon$ and $\epsilon_{j,k}\equiv \epsilon \cdot 2^{2k}$, this bracket has squared $L_2$ size bounded, up to a constant depending only on $s$, by 
	\begin{align*}
	\sum_{k=1}^{k_0} \epsilon_{j_k,k}^2\cdot (2^k)^{2(1/s-1)}+ \epsilon_0^2 \cdot \sum_{k=1}^{k_0} \big[(y_{k-1})^{1/s}\wedge 1\big] +\epsilon^2\lesssim \epsilon^2.
	\end{align*}
	The logarithm of the total number of brackets can be bounded by
	\begin{align*}
	\log \bigg[\prod_{k=1}^{k_0} N_1^2 \bar{N}_{j_k,k} \underline{N}_{j_k,k}\bigg] \lesssim \sum_{k=1}^{k_0} \bigg(\epsilon_0^{-1}+\frac{2^k}{\epsilon_{j_k,k}}\log\bigg(\frac{2^k}{\epsilon_{j_k,k}}\bigg) \bigg)\lesssim \epsilon^{-1}\log(1/\epsilon),
	\end{align*}
	proving the claim (\ref{ineq:s_concave_1}). Let $I_\ell$ be the same as in the proof for $d\geq 4$. By rescaling, it follows that 
	\begin{align*}
	\log \mathcal{N}_{[\,]}(\epsilon,\widetilde{\mathcal{P}}_s(I_\ell, B), L_2)\lesssim \frac{(B^2\abs{I_\ell})^{1/2}}{\epsilon} \log\bigg(\frac{(B^2\abs{I_\ell})^{1/2}}{\epsilon}\bigg).
	\end{align*}
	By (\ref{ineq:s_concave_0}), on $I_\ell$, $\sup_{x \in I_\ell}\sup_{p \in \mathcal{P}_{s,M}} p(x)\leq \abs{I_\ell}^{1/2s}$. Let $b_\ell = \abs{I_\ell}^{-\delta'}$ for some $\delta' \in (0,(-1/s-1)/2)$, and $\{\underline{f}_{j,\ell},\bar{f}_{j,\ell}:1\leq j\leq N_\ell\}$ be a $b_\ell \epsilon$-bracket for $\mathcal{P}_{s,M}|_{I_\ell}$ under $L_2$. A global bracket for $\mathcal{P}_{s,M}$ can be obtained by assembling these local brackets for all(=four) quadrants, with squared $L_2$-size at most $\epsilon^2 \sum_{\ell} b_\ell^2\lesssim \epsilon^2 $, and the logarithm of the number of brackets is
	\begin{align*}
	\sum_{\ell} \log N_\ell \lesssim \sum_{\ell} \frac{\abs{I_\ell}^{(1/s+1)/2}}{b_\ell \epsilon}\log \bigg(\frac{\abs{I_\ell}^{(1/s+1)/2}}{b_\ell \epsilon}\bigg)\lesssim \epsilon^{-1}\log(1/\epsilon).
	\end{align*}
	Hence for $s>-1/2$, $
	\log \mathcal{N}_{[\,]}(\epsilon, \mathcal{P}_{s,M}, h)= \log \mathcal{N}_{[\,]}(\epsilon, \mathcal{P}_{2s,M}, L_2)\lesssim \epsilon^{-1}\log(1/\epsilon)$. The rest of the proof is a standard computation of the size of the localized empirical process via Hellinger bracketing numbers (cf. \cite[Theorem 3.4.4]{van1996weak}), so we omit the details.
\end{proof}

\appendix

\section{Upper and lower bounds for weighted empirical processes}\label{section:ratio_ep}

As a direct application of Theorem \ref{thm:upper_bound_ep_set}, we consider upper and lower bounds for ratio-type empirical processes. Such bounds are initiated in \cite{wellner1978limit,shorack1982limit,stute1982oscillation,mason1983strong,stute1984oscillation} for uniform empirical processes on (subsets of) $\R$ (or $\R^d$), and are further investigated in \cite{alexander1987rates} for VC classes of sets, and extended by \cite{gine2003ratio,gine2006concentration} who studied more general VC-subgraph classes. These authors work with classes satisfying uniform entropy conditions,  and the class of sets (or functions) need to be Donsker apriori. The lack of corresponding results for non-Donsker class of sets are mainly due to the lack of sharp upper and lower bounds for the behavior of the empirical process. Here we fill in this gap by using Theorem \ref{thm:upper_bound_ep_set}.

\begin{theorem}\label{thm:ratio_ep_std}
	Let $r_n^2\gtrsim n^{-1/(\alpha+1)}$ and $
	\gamma_n \equiv n^{1/2} r_n \big(r_n^{\alpha-1} \wedge n^{-\frac{\alpha-1}{2(\alpha+1)}}\big)$.
	Then we have the following:
	\begin{enumerate}
		\item If (E1) holds and $r_n^{2\alpha} \log \log n\to 0$,
		\begin{align*}
		\limsup_{n \to \infty}\gamma_n \sup_{C \in \mathscr{C}: r_n^2\leq P(C)\leq 1} \frac{\abs{\Prob_n(C)-P(C)}}{\sqrt{P(C)}}<\infty\qquad \textrm{a.s.}
		\end{align*}
		\item 
		If (E1)-(E2) hold, 
		\begin{align*}
		\liminf_{n \to \infty} \gamma_n \sup_{C \in \mathscr{C}: r_n^2\leq P(C)\leq 1} \frac{\abs{\Prob_n(C)-P(C)}}{\sqrt{P(C)}}>0\qquad \textrm{a.s.}
		\end{align*}
	\end{enumerate}
\end{theorem}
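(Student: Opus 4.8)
The plan is to follow the classical route for ratio-type and law-of-the-iterated-logarithm statements (in the spirit of \cite{alexander1987rates,gine2003ratio,gine2006concentration}): \emph{peel} the index set $\{C\in\mathscr{C}: r_n^2\le P(C)\le 1\}$ into dyadic shells by the value of $P(C)$, reduce the normalized ratio process on each shell to a localized empirical process governed by Theorem~\ref{thm:upper_bound_ep_set}, feed this into Talagrand's concentration inequality (Appendix~\ref{section:tools}) on each shell, union-bound over the $\mathcal{O}(\log n)$ shells, and finally invoke Borel--Cantelli along the geometric subsequence $n_k = 2^k$ together with a blocking argument to fill in the intervening sample sizes.

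\emph{Part (1).} For $1\le j\le J_n$ with $J_n\asymp\log(1/r_n^2)\lesssim\log n$, put $\mathscr{C}_j\equiv\{C\in\mathscr{C}: 2^{j-1}r_n^2<P(C)\le 2^j r_n^2\}$ and $\sigma_j\equiv 2^{j/2}r_n$. On $\mathscr{C}_j$ one has $\sqrt{P(C)}\asymp\sigma_j$, hence $\sup_{C\in\mathscr{C}_j}\frac{|\Prob_n(C)-P(C)|}{\sqrt{P(C)}}\asymp\frac{1}{\sigma_j\sqrt{n}}\sup_{C\in\mathscr{C}_j}|\G_n(C)|$; since $\mathscr{C}_j\subseteq\mathscr{C}(\sigma_j)$ with $\sigma_j^2\ge r_n^2\gtrsim n^{-1/(\alpha+1)}$, Theorem~\ref{thm:upper_bound_ep_set} gives $\E\sup_{C\in\mathscr{C}_j}|\G_n(C)|\lesssim\max\{\sigma_j^{1-\alpha},n^{(\alpha-1)/2(\alpha+1)}\}$. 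A direct computation shows that $\gamma_n$ is precisely the normalization for which the $j$-th shell contributes $\lesssim 2^{-(\alpha\wedge 1)j/2}$ in expectation, so the full normalized ratio process has expectation $\mathcal{O}(1)$. Applying Talagrand's inequality to the class $\{\bm{1}_C: C\in\mathscr{C}(\sigma_j)\}$ (envelope $1$, $\sup_C P\bm{1}_C^2\le\sigma_j^2$) together with this expectation bound yields, for a fixed large constant $M$, $\Prob\big(\gamma_n\sup_{C\in\mathscr{C}_j}\frac{|\Prob_n(C)-P(C)|}{\sqrt{P(C)}}>M\big)\le e^{-cM^2\rho_n}$, where $\rho_n\equiv r_n^{-2\alpha}$ if $\alpha<1$ and $\rho_n\equiv n^{(\alpha-1)/(\alpha+1)}$ if $\alpha>1$; a union bound over the $J_n$ shells then gives $\Prob\big(\gamma_n\sup_{r_n^2\le P(C)\le 1}\frac{|\Prob_n(C)-P(C)|}{\sqrt{P(C)}}>M\big)\lesssim\log n\cdot e^{-cM^2\rho_n}$. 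The hypothesis $r_n^{2\alpha}\log\log n\to 0$ is exactly what makes $\sum_k\log n_k\cdot e^{-cM^2\rho_{n_k}}<\infty$ when $\alpha<1$ (the $\alpha>1$ case being automatic since $\rho_n$ then grows polynomially), so Borel--Cantelli controls the subsequence; a standard blocking argument — comparing $\Prob_n$ with $\Prob_{n_k}$ through the tail increments $n^{-1}\sum_{i=n_k+1}^{n}(\bm{1}_C-P(C))(X_i)$ and using $n_{k+1}/n_k=2$ — upgrades this to the claimed a.s.\ $\limsup$.

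\emph{Part (2).} It suffices to exhibit, with overwhelming probability, a single scale $\sigma=\sigma(n)\in[r_n,1]$ for which $\gamma_n\sup_{C:\,\sigma^2/8\le P(C)\le\sigma^2}\frac{|\Prob_n(C)-P(C)|}{\sqrt{P(C)}}\gtrsim 1$. The heart of the matter is the matching lower bound $\E\sup_{C:\,\sigma^2/8\le P(C)\le\sigma^2}|\G_n(C)|\gtrsim\max\{\sigma^{1-\alpha},n^{(\alpha-1)/2(\alpha+1)}\}$ for $\sigma^2\gtrsim n^{-1/(\alpha+1)}$. Condition (E2) provides a $\sigma^2/4$-packing of $\mathscr{C}(\sigma)$ of log-cardinality $\gtrsim\sigma^{-2\alpha}$; since any two packing points of $P$-measure $<\sigma^2/8$ would have symmetric difference $<\sigma^2/4$, at most one packing point lies outside the shell $\{\sigma^2/8\le P(C)\le\sigma^2\}$, so the packing is essentially concentrated there. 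Sudakov minorization for the Gaussianized process (Proposition~\ref{prop:lower_bound_ep_set}) then produces a lower bound of order $\sigma\sqrt{\sigma^{-2\alpha}}=\sigma^{1-\alpha}$ over the shell, and the removal of the Gaussian multiplier proceeds exactly as in Propositions~\ref{prop:lower_bound_ep_set_donsker}--\ref{prop:lower_bound_ep_set_nolog}, whose de-Gaussianization invokes only the upper bounds of Theorem~\ref{thm:upper_bound_ep}-(1), valid for the shell as a subclass of $\mathscr{C}(\sigma)$. With $\sqrt{P(C)}\asymp\sigma$ on the shell and the explicit form of $\gamma_n$, the normalized expectation is $\asymp 1$; a lower-deviation bound (Paley--Zygmund, or the lower tail in Talagrand's inequality), Borel--Cantelli along $n_k$, and the same blocking argument as in part (1) then give the a.s.\ $\liminf$.

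\emph{Main obstacle.} The delicate step is the shell (rather than ball) lower bound underlying part (2): the estimate of Theorem~\ref{thm:upper_bound_ep_set} is stated for $\mathscr{C}(\sigma)=\{P(C)\le\sigma^2\}$ and does not automatically localize to sets of a prescribed $P$-measure. The ``at most one small set in a packing'' reduction recovers a coarse-resolution packing of a shell from (E2), but when $r_n^2$ lies well above the critical scale $n^{-1/(\alpha+1)}$ one must additionally access the finer entropy structure of $\mathscr{C}$ \emph{within} a fixed band of $P$-measures — available for convex bodies and for upper/lower sets through boundary perturbations that keep the measure in the band — so that the shell supremum reaches the order $n^{(\alpha-1)/2(\alpha+1)}$ rather than merely $\sigma^{1-\alpha}$; quantifying this, and verifying that the de-Gaussianization survives at finite $n$, is where the real work lies. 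In part (1) the only genuine subtlety is the blocking step, since the ratio process is not monotone in $n$ and $r_n$ itself changes within a block; this is handled by the classical device of routing through $\Prob_{n_k}$ and absorbing the increments.
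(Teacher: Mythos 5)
Your proposal follows essentially the same route as the paper. Part (1) is the paper's argument in substance: the paper packages your shell-peeling-plus-Talagrand step as Proposition \ref{prop:deviation_normalized_process} (with $\phi(t)=t$ and shells $\mathscr{C}(r_nq^{j-1},r_nq^j]$), obtains the same shell expectations $\beta_{n,q}\lesssim r_n^{-\alpha}$ for $\alpha<1$ and $\lesssim n^{(\alpha-1)/2(\alpha+1)}/r_n$ for $\alpha>1$ from Theorem \ref{thm:upper_bound_ep_set}, and replaces your explicit blocking/Borel--Cantelli step by Alexander's device (Lemma \ref{lem:almost_sure_logn}), with the hypothesis $r_n^{2\alpha}\log\log n\to0$ entering exactly where you put it (to dominate the $\sqrt{\log\log}$ fluctuation term when $\alpha<1$).

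Two remarks on part (2), where the paper likewise runs the lower-tail version of Proposition \ref{prop:deviation_normalized_process} and needs $\beta_{n,q}$ bounded \emph{below}. First, your observation that an (E2)-packing of $\mathscr{C}(\sigma)$ at resolution $\sigma^2/4$ contains at most one set of $P$-measure below $\sigma^2/8$ --- so that the Sudakov/de-Gaussianization lower bound localizes to a shell --- is precisely the justification the paper omits when it asserts $\E\sup_{C\in\mathscr{C}(r_nq^{j-1},r_nq^j]}\abs{\G_n(C)}\gtrsim\cdots$; it is worth recording, and it fully covers $\alpha<1$ (where the innermost shell delivers $r_n^{1-\alpha}$, matching $\sqrt{n}r_n/\gamma_n$). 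Second, the obstacle you flag for $\alpha>1$ with $r_n^2\gg n^{-1/(\alpha+1)}$ is real, but it is equally present in the paper's own proof: one needs some shell at scale $\sigma\geq r_n$ with $\E\sup_{\mathrm{shell}}\abs{\G_n(C)}\gtrsim(\sigma/r_n)\,n^{(\alpha-1)/2(\alpha+1)}$, whereas the (E2)-packing at scale $\sigma$ only yields $\sigma^{1-\alpha}$, which matches the requirement precisely when $r_n^2\asymp n^{-1/(\alpha+1)}$ and falls short above the critical scale; (E1)--(E2) do not supply the fine-resolution within-shell packing that would close this. So your proposal reproduces what the paper's proof actually delivers, and your diagnosis that the above-critical $\alpha>1$ case needs extra within-band entropy structure (verifiable for convex bodies and upper/lower sets) is correct rather than a defect of your argument relative to the paper's.
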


\begin{theorem}\label{thm:ratio_ep_var}
	Let $r_n^2\gtrsim n^{-1/(\alpha+1)}$ and there exists some large constant $K_\alpha>0$ such that: 
	\begin{enumerate}
		\item If (E1) holds and $
		\liminf_{n \to \infty} r_n^2 \cdot n^{1/(\alpha+1)}\geq \underline{\rho}$
		for some $\underline{\rho} \in (K_\alpha,\infty]$, then 
		\begin{align*}
		\limsup_{n \to \infty} \sup_{C \in \mathscr{C}: r_n^2\leq P(C)\leq 1} \biggabs{\frac{\Prob_n(C)}{P(C)}-1} \leq \mathcal{O}\left(\underline{\rho}^{-\left( 1\wedge \frac{1+\alpha}{2}\right) }\right)\qquad \textrm{a.s.}
		\end{align*}
		\item If furthermore (E2) holds and $
		\limsup_{n \to \infty} r_n^2 \cdot n^{1/(\alpha+1)}\leq \bar{\rho}$
		for some $\bar{\rho} \in (K_\alpha,\infty]$, then 
		\begin{align*}
		\liminf_{n \to \infty} \sup_{C \in \mathscr{C}: r_n^2\leq P(C)\leq 1} \biggabs{\frac{\Prob_n(C)}{P(C)}-1} \geq \mathcal{O}\left(\bar{\rho}^{-\left( 1\wedge \frac{1+\alpha}{2}\right) }\right)\qquad \textrm{a.s.}
		\end{align*}
		
	\end{enumerate}
\end{theorem}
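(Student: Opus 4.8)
The plan is to reduce both bounds, by a dyadic peeling of the range $[r_n^2,1]$ of $P(C)$, to the sharp localized empirical process estimates in Theorem \ref{thm:upper_bound_ep_set}, following the same template as the proof of Theorem \ref{thm:ratio_ep_std}. Write $\rho_n\equiv r_n^2 n^{1/(\alpha+1)}$, let $\ell_n\asymp\log n$ be the least integer with $2^{\ell_n}r_n^2\geq1$, and set $\mathscr{C}_{n,j}\equiv\{C\in\mathscr{C}: 2^{j-1}r_n^2\leq P(C)\leq 2^j r_n^2\}$ for $1\leq j\leq\ell_n$. Since $\Prob_n(C)/P(C)-1=\G_n(C)/(\sqrt n\,P(C))$, on each slice I have the two-sided sandwich
\begin{align*}
\frac{1}{2^{j}r_n^2\sqrt n}\sup_{C\in\mathscr{C}_{n,j}}|\G_n(C)|\;\leq\;\sup_{C\in\mathscr{C}_{n,j}}\Bigl|\frac{\Prob_n(C)}{P(C)}-1\Bigr|\;\leq\;\frac{1}{2^{j-1}r_n^2\sqrt n}\sup_{C\in\mathscr{C}(\sqrt{2^{j}}\,r_n)}|\G_n(C)|,
\end{align*}
so the ratio statistic is controlled, slice by slice, by localized suprema of the non-Gaussian empirical process, which is exactly what Theorem \ref{thm:upper_bound_ep_set} estimates (upper bound under (E1), lower bound under (E1)--(E2)). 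For the lower comparison I would further observe that any $\tfrac{\sigma}{4}$-net or packing of $\mathscr{C}(\sigma)$ contains at most one set of $P$-measure below a fixed fraction of $\sigma^2$, so the lower bound of Theorem \ref{thm:upper_bound_ep_set} continues to hold with $\mathscr{C}(\sigma)$ replaced by the shell $\{C:\ \sigma^2\asymp P(C)\}$; hence for part~(2) it suffices to keep only the bottom slice $j=1$, on which $P(C)\asymp r_n^2$.

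For part~(1) I would insert $\E\sup_{\mathscr{C}(\sqrt{2^j}r_n)}|\G_n(C)|\lesssim_\alpha\max\{(2^jr_n^2)^{(1-\alpha)/2},\,n^{(\alpha-1)/2(\alpha+1)}\}$, giving an expected $j$-th slice ratio $\lesssim_\alpha n^{-1/2}\max\{(2^jr_n^2)^{-(1+\alpha)/2},\ n^{(\alpha-1)/2(\alpha+1)}(2^jr_n^2)^{-1}\}$; this is decreasing in $j$, so the bottom slice $j=1$ dominates, and there both candidate terms reduce to powers of $\rho_n$, yielding $\E\sup_{r_n^2\leq P(C)\leq1}|\Prob_n(C)/P(C)-1|\lesssim_\alpha\rho_n^{-(1\wedge(1+\alpha)/2)}\lesssim_\alpha\underline{\rho}^{-(1\wedge(1+\alpha)/2)}$ (using $\liminf_n\rho_n\geq\underline{\rho}$ and $\underline{\rho}>K_\alpha\geq1$, which is what makes the $\max$ collapse to the smaller exponent). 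To upgrade to an almost-sure bound I would apply Talagrand's concentration inequality (Appendix~\ref{section:tools}) to each $\sup_{\mathscr{C}(\sqrt{2^j}r_n)}|\G_n(C)|$ with weak variance $\lesssim 2^jr_n^2$: exceeding a large fixed multiple of the mean then has probability $\leq\exp(-c\,n^{\alpha/(\alpha+1)}\rho_n^{-(\alpha\wedge1)})$ (with further $2^j$-improvements), summable after a union bound over $j\leq\ell_n$ and over $n$, so Borel--Cantelli closes part~(1). When $\underline{\rho}=\infty$ the claimed bound is $0$; the same estimates give $\E\sup|\Prob_n(C)/P(C)-1|\to0$ together with summable fixed-level deviation probabilities, so no blocking over $n$ is needed.

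Part~(2) is the mirror image, now using the lower half of Theorem \ref{thm:upper_bound_ep_set} (hence (E2)) on the retained bottom slice: $\E\sup_{r_n^2\leq P(C)\leq 2r_n^2}|\Prob_n(C)/P(C)-1|\gtrsim_\alpha (\sqrt n r_n^2)^{-1}\max\{r_n^{1-\alpha},\,n^{(\alpha-1)/2(\alpha+1)}\}\asymp_\alpha\rho_n^{-(1\wedge(1+\alpha)/2)}\gtrsim_\alpha\bar{\rho}^{-(1\wedge(1+\alpha)/2)}$ by $\limsup_n\rho_n\leq\bar{\rho}$. A one-sided Talagrand deviation (again with variance $\asymp r_n^2$ on that shell) keeps this supremum above half its mean off an event of probability $\leq\exp(-c\,n^{\alpha/(\alpha+1)}\bar{\rho}^{-1})$, which is summable; Borel--Cantelli then gives the $\liminf$ bound, trivially so when $\bar{\rho}=\infty$.

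The hard part will be the exponent bookkeeping hidden in the middle sentences of the last two paragraphs: one must verify that the peeled bound is genuinely maximized at the bottom slice $P(C)\asymp r_n^2$ for \emph{both} regimes $\alpha<1$ and $\alpha>1$, that the two competing powers at $j=1$ collapse to exactly the exponent $1\wedge(1+\alpha)/2$, and that the Talagrand tail exponents stay summable simultaneously over the $\asymp\log n$ slices and over $n$ --- elementary but constant-sensitive computations, which is precisely where the threshold $K_\alpha$ in the statement is used. Everything else is already packaged in Theorem \ref{thm:upper_bound_ep_set}.
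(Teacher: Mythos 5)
Your proposal is correct and follows essentially the same route as the paper: dyadic peeling of $\{r_n^2\leq P(C)\leq 1\}$, the sharp shell/localized bounds of Theorem \ref{thm:upper_bound_ep_set} (reducing to the bottom slice $P(C)\asymp r_n^2$, where both competing powers collapse to $\rho_n^{-(1\wedge(1+\alpha)/2)}$), Talagrand's inequality on each slice, and Borel--Cantelli; the paper packages the peeling step in Proposition \ref{prop:deviation_normalized_process} and the a.s.\ upgrade in Lemma \ref{lem:almost_sure_logn} with $s\asymp\log\log n$, whereas you take larger deviation parameters so that plain Borel--Cantelli suffices, which works here because the target level is a constant. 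Your explicit packing observation that a maximal $\epsilon/4$-packing of $\mathscr{C}(\sqrt{\epsilon})$ contains at most one set of small measure, so the lower bound transfers to the shell, is exactly the point the paper uses implicitly through its definition of $\beta_{n,q}$ over shells, and is a welcome clarification rather than a deviation.
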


\begin{remark}\label{rmk:ratio_ep}
	Some technical remarks:
	\begin{enumerate}
		\item An interesting corollary of Theorem \ref{thm:ratio_ep_var} is that under entropy conditions (E1)-(E2), the sequence  in the theorem converges to $0$ as $n \to \infty$ almost surely if and only if $r_n^2\cdot n^{1/(\alpha+1)}\to \infty$. 
		\item Theorems \ref{thm:ratio_ep_std} and \ref{thm:ratio_ep_var} are also valid in their $L_p (1\leq p<\infty)$ versions (which can be seen by integrating the tail estimates in the proofs). For instance, if (E1) holds, then
		\begin{align*}
		\limsup_{n \to \infty} \biggpnorm{\gamma_n \sup_{C \in \mathscr{C}: r_n^2\leq P(C)\leq 1} \frac{\abs{\Prob_n(C)-P(C)}}{\sqrt{P(C)}}}{L_p(P^{\otimes n})}<\infty,
		\end{align*}
		and 
		\begin{align*}
		\limsup_{n \to \infty} \biggpnorm{\sup_{C \in \mathscr{C}: r_n^2\leq P(C)\leq 1} \biggabs{\frac{\Prob_n(C)}{P(C)}-1}}{L_p(P^{\otimes n})} \leq \mathcal{O}\left(\underline{\rho}^{-\left( 1\wedge \frac{1+\alpha}{2}\right) }\right).
		\end{align*}
		\item 	We may consider more general weighting functions of form $\phi(\sqrt{P(C)})$ as in \cite{gine2006concentration} rather than the special cases $\phi_1(t)=t$ in Theorem \ref{thm:ratio_ep_std} and $\phi_2(t)=t^2$ in Theorem \ref{thm:ratio_ep_var}. Here we make these choices mainly due to the fact that $\phi_1,\phi_2$ are of special interest in the history of empirical process theory \cite{wellner1978limit,shorack1982limit,stute1982oscillation,mason1983strong,stute1984oscillation,alexander1987rates}, and the corresponding results for more general cases follow from minor modifications of the proofs.
		\item It is also straightforward to consider corresponding ratio limit theorems for function classes satisfying the conditions of Theorem \ref{thm:upper_bound_ep}; we omit these digressions.
	\end{enumerate}
	
\end{remark}

We will investigate the behavior of ratio-type empirical processes in a more general setting as in \cite{gine2006concentration}. Let $\phi$ be a continuous and strictly increasing function with $\phi(0)=0$. Let $\mathscr{C}(r)\equiv \{C \in \mathscr{C}: P(C)\leq r^2\}$ and $\mathscr{C}(r,s] \equiv \mathscr{C}(s)\setminus \mathscr{C}(r)$. Fix $0<r<\delta\leq 1$. For a real number $1<q\leq 2$, let $\ell \equiv \ell_{r,\delta,q}$ be the smallest integer no smaller than $\log_q(\delta/r)$. For any $\bm{s}\equiv (s_1,\ldots,s_\ell) \in \R_{\geq 0}^\ell$, let
\begin{align*}
\beta_{n,q}(r,\delta)&\equiv \max_{1\leq j\leq \ell} \frac{\E \sup_{C \in \mathscr{C}(rq^{j-1},rq^j]}\abs{\G_n(C)} }{\phi(rq^j)}, \tau_{n,q}(r,\delta,\bm{s})\equiv \max_{1\leq j\leq \ell} \frac{rq^j \sqrt{ s_j}+s_j/\sqrt{n}}{\phi(rq^j)}.\nonumber
\end{align*}
The following result is essentially due to \cite{gine2006concentration}. We state a somewhat simplified and easier-to-use version. 

\begin{proposition}\label{prop:deviation_normalized_process}
	Assume that $\phi$ is continuous, strictly increasing and satisfies $
	\sup_{r\leq x\leq 1} \phi(qx)/\phi(x)=\kappa_{r,q}<\infty$
	for some $1<q\leq 2$. Then for any $\bm{s}\equiv (s_1,\ldots,s_\ell) \in \R_{\geq 0}^\ell$, both the probabilities
	\begin{align*}
	&\Prob\bigg[\sup_{C \in \mathscr{C}: r^2<P(C)\leq \delta^2} \frac{\abs{\G_n(C)}}{\phi(\sqrt{P(C)})}\geq K\kappa_{r,q}\big(\beta_{n,q}(r,\delta)+ \tau_{n,q}(r,\delta,\bm{s})\big)\bigg]
	\end{align*}
	and
	\begin{align*}
	&\Prob\bigg[\sup_{C \in \mathscr{C}: r^2<P(C)\leq \delta^2} \frac{\abs{\G_n(C)}}{\phi(\sqrt{P(C)})}\leq  K\big(\beta_{n,q}(r,\delta)- \tau_{n,q}(r,\delta,\bm{s})\big)\bigg]
	\end{align*}
	can be bounded by $
	K \sum_{j=1}^\ell \exp\big(-s_j/K\big)$. Here $K>0$ is a universal constant.
\end{proposition}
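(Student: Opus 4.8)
The plan is to run a geometric \emph{peeling} (slicing) argument over the range of $P(C)$, essentially reproducing the scheme of \cite{gine2006concentration}. First I would split the index set $\{C\in\mathscr{C}:\,r^2<P(C)\leq\delta^2\}$ into the $\ell$ disjoint shells $\mathscr{C}(rq^{j-1},rq^j]$, $j=1,\ldots,\ell$, which by the definition of $\ell$ cover the whole range (the top shell overflows $\delta^2$ by at most a bounded factor, harmlessly). On a single shell the normalized process is comparable to the plain supremum $\sup_{C\in\mathscr{C}(rq^{j-1},rq^j]}\abs{\G_n(C)}$, because the weight $\phi(\sqrt{P(C)})$ varies there only by a bounded factor; so the whole problem reduces to (i) Talagrand-type two-sided deviation bounds for $\sup_{C\in\mathscr{C}(rq^{j-1},rq^j]}\abs{\G_n(C)}$ around its mean, one shell at a time, and (ii) a union bound over the $\ell$ shells, which is exactly what the right-hand side $K\sum_j\exp(-s_j/K)$ is designed to absorb.

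For the upper estimate I would argue as follows. Fix $C$ in the $j$-th shell, so $rq^{j-1}<\sqrt{P(C)}\leq rq^j$; since $\phi$ is increasing, $\phi(\sqrt{P(C)})\geq\phi(rq^{j-1})$, and since $rq^{j-1}\in[r,1)$ (as $rq^{\ell-1}<\delta\leq 1$) the doubling hypothesis gives $\phi(rq^j)\leq\kappa_{r,q}\,\phi(rq^{j-1})$, whence $\abs{\G_n(C)}/\phi(\sqrt{P(C)})\leq\kappa_{r,q}\,\abs{\G_n(C)}/\phi(rq^j)$. On this shell every $\bm1_C$ has envelope $1$ and variance $P(C)(1-P(C))\leq P(C)\leq(rq^j)^2$, so Talagrand's concentration inequality (Appendix \ref{section:tools}) yields, with probability at least $1-K\exp(-s_j/K)$,
\begin{align*}
\sup_{C\in\mathscr{C}(rq^{j-1},rq^j]}\abs{\G_n(C)}\leq K\Big(\E\sup_{C\in\mathscr{C}(rq^{j-1},rq^j]}\abs{\G_n(C)}+rq^j\sqrt{s_j}+s_j/\sqrt{n}\Big).
\end{align*}
Dividing by $\phi(rq^j)$, taking the maximum over $j$, intersecting the $\ell$ good events, and combining with the weight comparison above produces the claimed bound $K\kappa_{r,q}\big(\beta_{n,q}(r,\delta)+\tau_{n,q}(r,\delta,\bm{s})\big)$ off an exceptional set of probability at most $K\sum_{j=1}^\ell\exp(-s_j/K)$.

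For the lower estimate I would use a single shell: pick $j_0$ attaining the maximum defining $\beta_{n,q}(r,\delta)$, note that for $C$ in that shell $\phi(\sqrt{P(C)})\leq\phi(rq^{j_0})$, so the normalized supremum over $\{r^2<P(C)\leq\delta^2\}$ dominates $\phi(rq^{j_0})^{-1}\sup_{C\in\mathscr{C}(rq^{j_0-1},rq^{j_0}]}\abs{\G_n(C)}$ (up to the harmless restriction to sets with $P(C)\leq\delta^2$ in the top shell), and then apply the lower tail of Talagrand's inequality on the $j_0$-th shell to lower-bound this last supremum by $K^{-1}\E\sup_{C\in\mathscr{C}(rq^{j_0-1},rq^{j_0}]}\abs{\G_n(C)}-K(rq^{j_0}\sqrt{s_{j_0}}+s_{j_0}/\sqrt{n})$ with probability at least $1-K\exp(-s_{j_0}/K)\geq 1-K\sum_j\exp(-s_j/K)$; dividing by $\phi(rq^{j_0})$ and absorbing constants gives $K\big(\beta_{n,q}(r,\delta)-\tau_{n,q}(r,\delta,\bm{s})\big)$. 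The hard part is not conceptual but the bookkeeping of absolute constants: one must track how the constants from both tails of Talagrand's inequality interact with the doubling constant $\kappa_{r,q}$ so that they collapse into the single $K$ of the statement. The decisive structural input — which forces the variance proxy on the $j$-th shell to be exactly $(rq^j)^2$ and thereby aligns the fluctuation term with $\tau_{n,q}$ — is the elementary bound $\mathrm{Var}_P(\bm1_C)\leq P(C)$ for indicators; measurability is not an issue since $\mathscr{C}$ is countable by assumption.
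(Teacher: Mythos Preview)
Your proposal is correct and follows essentially the same route as the paper: peel into the shells $\mathscr{C}_j=\mathscr{C}(rq^{j-1},rq^j]$, apply Talagrand's inequality on each shell with variance proxy $\sigma_j^2=r^2q^{2j}$, take a union bound, and pass from the piecewise-constant weight $\phi(rq^j)$ to $\phi(\sqrt{P(C)})$ via the doubling condition $\sup_{r\leq x\leq 1}\phi(qx)/\phi(x)=\kappa_{r,q}$. The paper only writes out the upper-deviation direction and dismisses the lower one with ``the second follows from similar arguments''; your single-shell argument for the lower tail (pick $j_0$ attaining the maximum in $\beta_{n,q}$ and apply the lower tail of Talagrand there) is exactly the natural reading of that remark.
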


\begin{proof}[Proof of Proposition \ref{prop:deviation_normalized_process}]
	We only prove the first claim; the second follows from similar arguments. The proof is a simple application of Talagrand's concentration inequality combined with a peeling device. Write $\mathscr{C}_j\equiv \mathscr{C}(rq^{j-1},rq^j]$ and $\phi_q(u)\equiv \phi(rq^j)$ if $u \in (rq^{j-1},rq^j]$ for notational convenience. By Talagrand's concentration inequality,
	\begin{align*}
	\Prob\bigg[\sup_{C \in \mathscr{C}_j}\abs{\G_n(C)}\geq K\bigg(\E \sup_{C \in \mathscr{C}_j}\abs{\G_n(C)} + \sqrt{\sigma^2_j s_j}+\frac{s_j}{\sqrt{n}}\bigg)\bigg]\leq K\exp\big(-s_j/K\big)
	\end{align*}
	where $\sigma_j^2 = \sup_{f \in \mathscr{C}_j} P(C)=r^2q^{2j}$. Hence by a union bound we see that with probability at least $1-\sum_{j=1}^\ell  K\exp(-s_j/K)$, it holds that
	\begin{align*}
	&\bigg(\sup_{C \in \mathscr{C}: r^2<P(C)\leq \delta^2} \frac{\abs{\G_n(C)}}{\phi_q(\sqrt{P(C)})}-K\beta_{n,q}(r,\delta)\bigg)_+\\
	&\leq \max_{1\leq j\leq \ell}\bigg( \frac{ \sup_{C \in \mathscr{C}_j}\abs{\G_n(C)} }{\phi(rq^j)}- \frac{K\E \sup_{C\in \mathscr{C}_j}\abs{\G_n(C)} }{\phi(rq^j)}\bigg)_+\\
	&\leq K\max_{1\leq j\leq \ell} \frac{rq^j \sqrt{ s_j}+s_j/\sqrt{n}}{\phi(rq^j)}.
	\end{align*}
	Now the conclusion follows from $\sup_{r\leq x\leq 1}\phi(qx)/\phi(x)<\infty$.
\end{proof}

The next lemma, due to \cite[Lemma 7.2]{alexander1987rates}, provides a convenient device to derive almost sure results for ratio-type empirical processes. For any measurable function $f$, let $\sigma_P f \equiv \sqrt{Pf^2}$.

\begin{lemma}\label{lem:almost_sure_logn}
	Let $c_n,u_n$ be such that $c_n/n \downarrow$ and $u_n \downarrow$, and assume that $r_n \downarrow$ and $\sqrt{n}\delta_n \uparrow$. For a centered function class $\mathcal{F}\subset L_{2}(P)$, let
	\begin{align*}
	A_n\equiv \left\{\abs{\G_n f}\geq c_n \phi(\sigma_P f) +u_n\textrm{ for some }
	f \in \mathcal{F}, r_n\leq \sigma_P f\leq \delta_n \right\},
	\end{align*}
	and
	\begin{align*}
	A_n^{\epsilon}&\equiv \big\{\abs{\G_n f}\geq (1-\epsilon)\big(c_n \phi(\sigma_P f) +u_n\big)\textrm{ for some }
	f \in \mathcal{F}, \\
	&\qquad\qquad\qquad\qquad\qquad\qquad\qquad\qquad r_n\leq \sigma_P f\leq \sqrt{1+\epsilon}\cdot\delta_n \big\}.
	\end{align*}
	Assume that $
	\inf_{n\geq 1, t \in [r_n,\delta_n]} c_n\frac{\phi(t)}{t}>0$. 
	Then if $
	\Prob(A_n^\epsilon)=\mathcal{O}(1/(\log n)^{1+\theta})$
	holds for some $\epsilon,\theta>0$, we have $
	\Prob(A_n \textrm{ i.o.})=0$.
\end{lemma}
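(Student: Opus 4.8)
The plan is to run a blocking / Borel--Cantelli argument in the spirit of \cite{alexander1987rates}. Fix the $\epsilon,\theta>0$ for which $\Prob(A_n^\epsilon)=\mathcal{O}\big((\log n)^{-(1+\theta)}\big)$, pick $\rho=\rho(\epsilon)>1$ close enough to $1$ (say $\rho\le\sqrt{1+\epsilon}$ and $\rho^{-3/2}\ge 1-\epsilon/2$), and set $n_k\equiv\lfloor\rho^k\rfloor$, $I_k\equiv\{n:n_k<n\le n_{k+1}\}$, $m_k\equiv n_{k+1}-n_k\asymp(\rho-1)n_{k+1}$. Since $\log n_k\asymp k$, it suffices by Borel--Cantelli to show $\sum_k\Prob\big(\bigcup_{n\in I_k}A_n\big)<\infty$.

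First I would carry out a deterministic sandwiching step. Suppose $A_n$ occurs for some $n\in I_k$, with witness $f$ satisfying $r_n\le\sigma_P f\le\delta_n$ and $|\G_n f|\ge c_n\phi(\sigma_P f)+u_n$. Because $n\le n_{k+1}$ and $r_\cdot\downarrow$, $u_\cdot\downarrow$, $c_\cdot/\cdot\downarrow$, $\sqrt{\cdot}\,\delta_\cdot\uparrow$, one gets, for all large $k$, $\sigma_P f\ge r_n\ge r_{n_{k+1}}$, $\sigma_P f\le\delta_n\le\sqrt{n_{k+1}/n}\,\delta_{n_{k+1}}\le\sqrt{\rho}\,\delta_{n_{k+1}}$, $c_n\ge(n/n_{k+1})c_{n_{k+1}}\ge(1-o(1))\rho^{-1}c_{n_{k+1}}$, and $u_n\ge u_{n_{k+1}}$. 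Writing $S_m f\equiv\sum_{i=1}^m(f(X_i)-Pf)$, so that $\sqrt{n_{k+1}}\,\G_{n_{k+1}}f=\sqrt n\,\G_n f+R_n(f)$ with $R_n(f)\equiv\sum_{i=n+1}^{n_{k+1}}(f(X_i)-Pf)$, this gives
\[
|\G_{n_{k+1}}f|\ \ge\ \rho^{-3/2}(1-o(1))\big(c_{n_{k+1}}\phi(\sigma_P f)+u_{n_{k+1}}\big)-n_{k+1}^{-1/2}|R_n(f)|,
\]
hence
\[
\bigcup_{n\in I_k}A_n\ \subseteq\ A_{n_{k+1}}^{\epsilon}\ \cup\ G_k,\qquad
G_k\equiv\Big\{\sup_{n\in I_k}\ \sup_{f:\,r_{n_{k+1}}\le\sigma_P f\le\sqrt{\rho}\,\delta_{n_{k+1}}}\frac{n_{k+1}^{-1/2}|R_n(f)|}{c_{n_{k+1}}\phi(\sigma_P f)+u_{n_{k+1}}}\ >\ \tfrac{\epsilon}{2}\Big\}.
\]
By hypothesis $\sum_k\Prob(A_{n_{k+1}}^\epsilon)=\mathcal{O}\big(\sum_k k^{-(1+\theta)}\big)<\infty$, so the whole problem reduces to $\sum_k\Prob(G_k)<\infty$.

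For $G_k$ I would first apply an Ottaviani / Montgomery--Smith type maximal inequality to the reversed partial sums $\{R_n(f)\}_{n\in I_k}$ (as $n$ decreases through $I_k$ these are the partial sums of the $m_k$ i.i.d.\ terms $f(X_{n_{k+1}})-Pf,\ f(X_{n_{k+1}-1})-Pf,\dots$), uniformly in $f$, thereby bounding $\Prob(G_k)$ by a constant multiple of the probability of the analogous event for the \emph{full} block increment $R_{n_k}(f)=\sum_{i=n_k+1}^{n_{k+1}}(f(X_i)-Pf)$, i.e.\ of a weighted deviation for an $m_k$-sample empirical process. One then invokes Proposition~\ref{prop:deviation_normalized_process} (peeling $\sigma_P f$ into shells $(r_{n_{k+1}}q^{j-1},r_{n_{k+1}}q^j]$, Talagrand's inequality shell by shell), using (E1) and Theorem~\ref{thm:upper_bound_ep_set} to control the expected suprema $\E\sup_{C\in\text{shell }j}|\G_{m_k}(C)|$. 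The crucial leverage is the hypothesis $\lambda\equiv\inf_{n,\,t\in[r_n,\delta_n]}c_n\phi(t)/t>0$: it forces the denominator in $G_k$ to be $\ge\lambda\sigma_P f$, whereas after the $n_{k+1}^{-1/2}$ rescaling the increment lives at scale $\sqrt{m_k/n_{k+1}}\asymp\sqrt{\rho-1}$ relative to $\sigma_P f$, so the event in $G_k$ corresponds to a deviation of order $(\rho-1)^{-1/2}$ (or larger, when $c_n$ grows) standard deviations in each shell.

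The hard part will be this last balancing — squeezing $\sum_k\Prob(G_k)<\infty$ out of the increment process. The delicate point is that the per-shell Talagrand tail $Ke^{-s_j/K}$ is capped by its variance term $\sqrt{\sigma_j^2 s_j}$, so $\rho$ must be chosen relative to $\epsilon$ and, in the intended applications, one leans on the standing lower bound $r_n^2\gtrsim n^{-1/(\alpha+1)}$ on the localization radius together with the growth of $c_n$ to push $\sqrt{m_k/n_{k+1}}\,\E\sup_{C\in\text{shell }j}|\G_{m_k}(C)|$ well below $\tfrac{\epsilon}{2}\lambda\sigma_j$, which then leaves enough room for the deviation levels $s_j$ to grow (and, since the number of shells is $\lesssim\log n_{k+1}\asymp k$, makes $\Prob(G_k)$ summable). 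With this in hand, Borel--Cantelli applied to $\bigcup_{n\in I_k}A_n\subseteq A_{n_{k+1}}^\epsilon\cup G_k$ yields $\Prob(A_n\ \text{i.o.})=0$.
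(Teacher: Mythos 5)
The paper offers no proof of this lemma at all: it is quoted as \cite[Lemma 7.2]{alexander1987rates}, so your attempt can only be measured against the blocking argument that citation refers to. Your skeleton is the right one and matches it: geometric blocks $n_k=\lfloor\rho^k\rfloor$, a purely deterministic transport of $A_n$ for $n\in I_k$ to the right endpoint $n_{k+1}$ using exactly the four monotonicity hypotheses ($r_n\downarrow$ for the lower range constraint, $\sqrt{n}\,\delta_n\uparrow$ for the upper one, $c_n/n\downarrow$ and $u_n\downarrow$ to get $\sqrt{n}\,c_n\ge (n_k/n_{k+1})^{3/2}\sqrt{n_{k+1}}\,c_{n_{k+1}}$ and $\sqrt{n}\,u_n\gtrsim\sqrt{n_{k+1}}\,u_{n_{k+1}}$), followed by Borel--Cantelli over blocks since $\log n_{k+1}\asymp k$. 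The inclusion $\bigcup_{n\in I_k}A_n\subseteq A^{\epsilon}_{n_{k+1}}\cup G_k$ and the arithmetic of $\rho$ versus $\epsilon$ are correct; this part is precisely why those monotonicity conditions appear in the statement.

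The genuine gap is the disposal of $G_k$, in two respects. First, your route to $\sum_k\Prob(G_k)<\infty$ invokes (E1), Theorem \ref{thm:upper_bound_ep_set}, Proposition \ref{prop:deviation_normalized_process}, the standing bound $r_n^2\gtrsim n^{-1/(\alpha+1)}$ and the growth of $c_n$. None of these are hypotheses of the lemma, which is stated for an \emph{arbitrary} centered class $\mathcal{F}\subset L_2(P)$ under only the monotonicity conditions, $\inf_{n,t}c_n\phi(t)/t>0$ and the decay of $\Prob(A_n^{\epsilon})$; so even if completed, your argument establishes a special case tailored to the paper's applications rather than the stated lemma. Second, the step is not actually completed. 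Your decomposition forces $\Prob(G_k)$ itself to be summable in $k$ --- a much stronger demand than the ``bounded away from $1$'' increment control that a genuine Ottaviani-type argument would require --- and whether $\sqrt{m_k/n_{k+1}}\,\E\sup|\G_{m_k}|$ in each shell can be driven below $\tfrac{\epsilon}{2}\lambda\sigma_j$ with enough slack left for summable Talagrand tails over $\ell\asymp\log n_{k+1}$ shells is exactly the delicate balancing you defer as ``the hard part''; note also that for $\alpha>1$ the un-normalized quantity $\E\sup_f|\G_{m_k}(f)|/\sigma_Pf$ \emph{diverges} in $k$, so the argument only closes because $c_{n_{k+1}}$ diverges at the matching rate --- a fact about the intended applications, not about the lemma. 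Alexander's own proof controls the within-block maximum with the empirical-process maximal inequalities of his paper (for uniformly bounded classes of sets), not by re-importing entropy bounds; to have a complete proof you must either reproduce an increment control of that kind from the lemma's stated hypotheses, or add to the statement the extra hypotheses your argument actually consumes.
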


\begin{proof}[Proof of Theorem \ref{thm:ratio_ep_std} ]
	Consider the first claim. Note that for $0<\alpha<1$, $
	\beta_{n,q}\lesssim \max_{1\leq j\leq \ell}\frac{ (r_n q^j)^{1-\alpha}}{r_n q^j} \asymp r_n^{-\alpha}$, 
	while for $\alpha>1$, $
	\beta_{n,q}\lesssim  \max_{1\leq j\leq \ell} \frac{ n^{(\alpha-1)/2(\alpha+1)}}{r_n q^j}\asymp n^{\frac{\alpha-1}{2(\alpha+1)}}/r_n$. For $s_j\equiv s+2K\log j$, we have
	\begin{align*}
	\tau_{n,q}&\lesssim \max_{1\leq j\leq \ell} \left(\sqrt{s+2K\log j}+ \frac{s+2K\log j}{\sqrt{n}r_n q^j}\right)\\
	&\lesssim   \sqrt{s\vee \log \log(1/r_n)}+(s\vee 1)n^{-\frac{\alpha}{2(\alpha+1)}},
	\end{align*}
	and the probability estimate $
	K\sum_{j=1}^\ell \exp(-s_j/K)=Ke^{-s} \sum_{j=1}^\ell j^{-2}\leq K'e^{-s}$. 
	This proves that
	\begin{align*}
	&\Prob\bigg(\sup_{C \in \mathscr{C}: r_n^2\leq P(C)\leq 1} \frac{\abs{\G_n(C)}}{\sqrt{P(C)}}\\
	&\qquad\qquad \geq K\left(\beta_{n,q}+\sqrt{s\vee \log \log(1/r_n)}+(s\vee 1)n^{-\frac{\alpha}{2(\alpha+1)}}\right)\bigg)\leq K'e^{-s}.
	\end{align*}
	The first claim of (1) follows from Lemma \ref{lem:almost_sure_logn} by setting $s\asymp \log\log n$, and requiring $\beta_{n,q}\gg \sqrt{\log \log n} \vee \log \log n\cdot n^{-\alpha/(2(\alpha+1))}$.  The second claim follows from similar lines by observing that under (E2), Theorem \ref{thm:upper_bound_ep_set} yields that for $0<\alpha<1$, $
	\beta_{n,q}\gtrsim \max_{1\leq j\leq \ell}\frac{ (r_n q^j)^{1-\alpha}}{r_n q^j} \asymp r_n^{-\alpha}$, while for $\alpha>1$, $
	\beta_{n,q}\gtrsim  \max_{1\leq j\leq \ell} \frac{ n^{(\alpha-1)/2(\alpha+1)}}{r_n q^j}\asymp  n^{\frac{\alpha-1}{2(\alpha+1)}}/r_n$, 
	and $\tau_{n,q}$ can be estimated from above using the same arguments. 
\end{proof}

\begin{proof}[Proof of Theorem \ref{thm:ratio_ep_var}]
	
	The proof of Theorem \ref{thm:ratio_ep_var} uses a similar strategy as that of Theorem \ref{thm:ratio_ep_std}. For convenience of the reader we provide some details. Consider the first claim. Note that for $0<\alpha<1$, $
	\beta_{n,q}\lesssim \max_{1\leq j\leq \ell} \frac{(r_n q^j)^{1-\alpha}}{r_n^2 q^{2j}}\asymp r_n^{-(1+\alpha)}$, while for $\alpha>1$, $
	\beta_{n,q}\lesssim  \max_{1\leq j\leq \ell} \frac{ n^{(\alpha-1)/2(\alpha+1)}}{r_n^2 q^{2j}}\asymp r_n^{-2} n^{\frac{\alpha-1}{2(\alpha+1)}}$. 
	For $s_j\equiv s+2K\log j$, we have
	\begin{align*}
	\tau_{n,q}&\lesssim \max_{1\leq j\leq \ell} \left(r_n^{-1}\sqrt{s+2K\log j}+ \frac{s+2K\log j}{\sqrt{n}r_n^2 q^{2j}}\right)\\
	&\lesssim   \sqrt{r_n^{-2}\big(s\vee \log \log(1/r_n)\big) }+(s\vee 1)(\sqrt{n}r_n^2)^{-1}.
	\end{align*}
	This shows that, for 
	\begin{align*}
	\bar{\gamma}_n\equiv \big(r_n^{-2} n^{-\frac{1}{\alpha+1}}\big)^{1 \wedge \frac{1+\alpha}{2} }= 
	\begin{cases}
		n^{-1/2} r_n^{-(1+\alpha)}, & \alpha \in (0,1);\\
		r_n^{-2} n^{-\frac{1}{\alpha+1}}, & \alpha>1.
	\end{cases}
	\end{align*}
	we have
	\begin{align*}
	&\Prob\bigg(\sup_{C \in \mathscr{C}: r_n^2\leq P(C)\leq 1} \frac{\abs{\Prob_n(C)-P(C)}}{P(C)}\\
	&\qquad \geq K\left( \bar{\gamma}_n+  \sqrt{(nr_n^2)^{-1}\big(s\vee \log \log(1/r_n) \big)}+(s\vee 1)(nr_n^2)^{-1}\right)\bigg)\leq K'e^{-s}.
	\end{align*}
	The first claim of the theorem follows by taking $s\asymp \log  \log n$, applying Lemma \ref{lem:almost_sure_logn} and noting that $\limsup_n \bar{\gamma}_n \leq \underline{\rho}^{-1}$ by the assumption. The second claim follows similarly by estimating $\beta_{n,q}$ from below, up to a multiplicative constant, by $\bar{\gamma}_n$ and then repeat the arguments as above.
\end{proof}

\section{Proof of technical lemmas}\label{section:proof_lemma}

\subsection{Proof of Lemma \ref{lem:lse_additive_error}}

\begin{proof}[Proof of Lemma \ref{lem:lse_additive_error}]
	The proof is a modification of that of \cite[Proposition 2]{han2017sharp}. Let $
	\mathbb{M}_n \bm{1}_{C} \equiv \frac{2}{n} \sum_{i=1}^n (\bm{1}_{C}-\bm{1}_{C_0})(X_i)\xi_i - \frac{1}{n}\sum_{i=1}^n (\bm{1}_{C}-\bm{1}_{C_0})^2(X_i)$, and $
	M\bm{1}_{C} \equiv \E \left[\mathbb{M}_n(\bm{1}_{C})\right]=-P(\bm{1}_{C}-\bm{1}_{C_0})^2 = - P\abs{C\Delta C_0} $.
	Here we used the fact that $\E[\xi_i|X_i]=0$. Then it is easy to see that
	\begin{align*}
	&\abs{\mathbb{M}_n \bm{1}_{C} -\mathbb{M}_n \bm{1}_{C_0}  -(M\bm{1}_{C} - M\bm{1}_{C_0})}\\
	&\leq \biggabs{\frac{2}{n}\sum_{i=1}^n (\bm{1}_{C}-\bm{1}_{C_0})(X_i)\xi_i}+ \abs{(\Prob_n-P)(\bm{1}_{C}-\bm{1}_{C_0})^2}.
	\end{align*}
	Fix $t\geq 1$. For $j\in \N$, let $\mathscr{C}_{j}\equiv \{C \in \mathscr{C}: 2^{j-1}t\delta_n\leq  P^{1/2}(C\Delta C_0)< 2^j t\delta_n\}$. Then by a standard peeling argument, we have
	\begin{align*}
	\Prob\left( P^{1/2}\abs{\widehat{C}_n\Delta C_0}\geq t \delta_n\right)&\leq \sum_{j\geq 1} \Prob\bigg[\sup_{C \in \mathscr{C}_j}\left\{\mathbb{M}_n(\bm{1}_{C} )-\mathbb{M}_n(\bm{1}_{C_0})\right\}\geq 0 \bigg].
	\end{align*}
	Each probability term in the above display can be further bounded by
	\begin{align*}
	& \Prob\bigg[\sup_{ {C} \in \mathscr{C}_j}\left\{\mathbb{M}_n(\bm{1}_{C})-\mathbb{M}_n(\bm{1}_{C_0})-(M\bm{1}_{C}-M\bm{1}_{C_0})\right\}\geq 2^{2j-2}t^2\delta_n^2 \bigg]\\
	&\leq \Prob\bigg(\sup_{C \in \mathscr{C}: P^{1/2}\abs{C\Delta C_0} \leq 2^j t\delta_n }\biggabs{\frac{1}{\sqrt{n}}\sum_{i=1}^n \xi_i \big(\bm{1}_{C}-\bm{1}_{C_0}\big)(X_i)}\geq 2^{2j-4} t^2 \sqrt{n}\delta_n^2\bigg)\\
	&\quad\quad + \Prob\bigg(\sup_{C \in \mathscr{C}: P^{1/2}\abs{C\Delta C_0} \leq 2^j t\delta_n } \abs{\G_n \big(\bm{1}_{C}-\bm{1}_{C_0}\big)^2 }\geq 2^{2j-3} t^2 \sqrt{n}\delta_n^2\bigg).
	\end{align*}
	By the contraction principle and moment inequality for the empirical process (cf. \cite[Proposition 3.1]{gine2000exponential}), we have
	\begin{align*}
	&\E \bigg(\sup_{C \in \mathscr{C}: P^{1/2}\abs{C\Delta C_0} \leq 2^j t\delta_n }\biggabs{\frac{1}{\sqrt{n}}\sum_{i=1}^n \xi_i \big(\bm{1}_{C}-\bm{1}_{C_0}\big)(X_i)}^4\bigg)\\
	&\qquad \bigvee \E \bigg(\sup_{C \in \mathscr{C}: P^{1/2}\abs{C\Delta C_0} \leq 2^j t\delta_n }\biggabs{\frac{1}{\sqrt{n}}\sum_{i=1}^n \epsilon_i \big(\bm{1}_{C}-\bm{1}_{C_0}\big)^2(X_i)}^4\bigg)\\
	& \lesssim \big[\phi_n(2^j t \delta_n)\big]^4 + (1\vee \pnorm{\xi_1}{2})^4 2^{4j} t^4\delta_n^4+n^{-2}\big\{1\vee \E \max_{1\leq i\leq n} \abs{\xi_i}^4\big\}.
	\end{align*}
	By Chebyshev's inequality,
	\begin{align*}
	&\Prob\left(P^{1/2}\abs{\widehat{C}_n\Delta C_0}\geq t \delta_n\right)\\
	&\lesssim \sum_{j\geq 1} \bigg[ \left(\frac{\phi_n(2^j t\delta_n)}{2^{2j}t^2 \sqrt{n}\delta_n^2}\right)^4\bigvee \frac{(1\vee \pnorm{\xi_1}{2})^4 }{2^{4j}t^4 n^2\delta_n^4}\bigvee \frac{1\vee \E \max_{1\leq i\leq n} \abs{\xi_i}^4 }{2^{8j} t^8 n^{4}\delta_n^8 }\bigg].
	\end{align*}
	Under the assumption that $n\delta_n^2\geq  1 \vee \pnorm{\xi_1}{2}^2 \vee  \E^{1/4} \max_{1\leq i\leq n} \abs{\xi_i}^4 $, and noting that $\phi_n(2^j t \delta_n)\leq 2^j t \phi_n(\delta_n)$ by the assumption that $\delta \mapsto \phi_n(\delta)/\delta$ is non-increasing, the right side of the above display can be further bounded up to a constant by $
	\sum_{j\geq 1}  \left(\frac{\phi_n(\delta_n)}{2^{j}t \sqrt{n}\delta_n^2}\right)^4+\frac{1}{t^4}\lesssim \frac{1}{t^4}$ 
	for $t\geq 1$. The expectation bound follows by integrating the tail estimate.
\end{proof}

\subsection{Proof of Lemma \ref{lem:lse_dependent_bounded_error}}

\begin{proof}[Proof of Lemma \ref{lem:lse_dependent_bounded_error}]
	Clearly $\E[\xi_1|X_1]=0$ and $\E[\xi_1^2|X_1]\lesssim 1\vee \pnorm{\xi_1}{2}^2$ almost surely, so we only need to verify that the assumed conditions imply (\ref{ineq:ep_additive_error}) (in particular the second one). By symmetrization (cf. \cite[Theorem 3.1.21]{gine2015mathematical}) and contraction principle (cf. \cite[Theorem 3.1.17]{gine2015mathematical}) for the empirical process,
	\begin{align*}
	&\E \sup_{C \in \mathscr{C}: P\abs{C\Delta C_0}<\delta^2} \biggabs{\sum_{i=1}^n (\bm{1}_C-\bm{1}_{C_0})(X_i)\xi_i}\\ 
	&\lesssim \E \sup_{C \in \mathscr{C}: P\abs{C\Delta C_0}<\delta^2}\biggabs{\sum_{i=1}^n \epsilon_i (\bm{1}_C-\bm{1}_{C_0})(X_i)f_{C_0}(X_i) (\eta_i-2a) }\\
	&\lesssim \E \sup_{C \in \mathscr{C}: P\abs{C\Delta C_0}<\delta^2}\biggabs{\sum_{i=1}^n \epsilon_i (\bm{1}_C-\bm{1}_{C_0})(X_i)f_{C_0}(X_i) }\\
	&\lesssim \E \sup_{C \in \mathscr{C}: P\abs{C\Delta C_0}<\delta^2}\biggabs{\sum_{i=1}^n \epsilon_i (\bm{1}_C-\bm{1}_{C_0})(X_i) }\\
	&\qquad\qquad \bigvee \E \sup_{C \in \mathscr{C}: P\abs{C\Delta C_0}<\delta^2}\biggabs{\sum_{i=1}^n \epsilon_i (\bm{1}_{C\cap C_0}-\bm{1}_{C_0})(X_i) },
	\end{align*}
	where the last inequality follows by noting that
	\begin{align*}
	(\bm{1}_C-\bm{1}_{C_0})f_{C_0} = (\bm{1}_C-\bm{1}_{C_0})(2\bm{1}_{C_0}-1) = 2(\bm{1}_{C\cap C_0}-\bm{1}_{C_0})-(\bm{1}_C-\bm{1}_{C_0}),
	\end{align*}
	and using triangle inequality. The proof is complete.
\end{proof}

\section{Some useful tools}\label{section:tools}

\subsection{Talagrand's concentration inequality}
We frequently use Talagrand's concentration inequality \cite{talagrand1996new} in the following form given by \cite[Theorems 3.3.9/3.3.10]{gine2015mathematical}. 

\begin{lemma}
Let $\mathcal{F}$ be a countable class of real-valued measurable functions such that $\sup_{f \in \mathcal{F}} \pnorm{f}{\infty}\leq b$. Then
\begin{align*}
\Prob\bigg(\biggabs{\sup_{f \in \mathcal{F}}\abs{\G_n (f)} -\E\sup_{f \in \mathcal{F}}\abs{\G_n (f)}}\geq  \sqrt{2\sigma_n^2 x}+4 b \frac{x}{\sqrt{n}} \bigg)\leq 2e^{-x},
\end{align*}
where $\sigma_n^2\equiv 2 n^{-1/2}\E\sup_{f \in \mathcal{F}}\abs{\G_n (f)}+\sup_{f \in \mathcal{F}} \mathrm{Var}_P (f)$.
\end{lemma}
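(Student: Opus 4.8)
This lemma is a restatement of the classical Talagrand concentration inequality for empirical processes in the sharp form due to Bousquet and Klein--Rio, as packaged in \cite[Theorems 3.3.9 and 3.3.10]{gine2015mathematical}; the plan is to invoke that result after a routine translation of notation and constants. For the reader's benefit I would outline how one derives it from scratch.

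First I would reduce to a finite index set: by countability of $\mathcal{F}$ and monotone convergence it suffices to prove the estimate with $\mathcal{F}$ replaced by an arbitrary finite subset and with constants independent of its cardinality, and then pass to the limit. Set $S \equiv \sqrt{n}\,\sup_{f\in\mathcal{F}}\abs{\G_n(f)} = \sup_{f\in\mathcal{F}}\bigabs{\sum_{i=1}^n (f(X_i)-Pf)}$. The heart of the matter is an exponential bound for $S-\E S$, obtained by the entropy (Herbst) method: one applies the modified logarithmic Sobolev inequality for the product measure $P^{\otimes n}$, using that altering a single coordinate $X_i$ changes $S$ by at most $2b$ and that the associated weak-variance term is controlled by $v \equiv n\,\sigma^2 + 2b\,\E S$ with $\sigma^2\equiv \sup_{f\in\mathcal{F}}\mathrm{Var}_P(f)$. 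Solving the resulting differential inequality for $\log \E e^{\lambda(S-\E S)}$ yields a Bernstein-type moment generating function bound with variance proxy $v$ and scale parameter a universal multiple of $b$; the lower tail is handled analogously through the Klein--Rio refinement.

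Converting these moment generating function bounds by Chernoff's inequality gives $\Prob\big(\abs{S-\E S}\ge \sqrt{2vx}+c\,b\,x\big)\le 2e^{-x}$ for a universal constant $c$. Dividing through by $\sqrt{n}$ turns $v/n$ into $\sigma^2 + 2b\,n^{-1/2}\E\sup_{f\in\mathcal{F}}\abs{\G_n(f)}$, which for $b=1$ is exactly $\sigma_n^2$ (for general $b$ the extra factor is absorbed into the displayed $4b\,x/\sqrt{n}$ term), turns $\E S/\sqrt{n}$ into $\E\sup_{f\in\mathcal{F}}\abs{\G_n(f)}$, and turns $bx$ into $bx/\sqrt{n}$, producing precisely the two-sided bound as stated, with the factor $2$ coming from the union over the two tails.

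The one genuinely delicate ingredient is the \emph{sharpness of the constants}: the clean coefficient $2$ inside $\sqrt{2\sigma_n^2 x}$, and a universal rather than cardinality-dependent multiplier on $b x/\sqrt{n}$, are exactly what the Bousquet and Klein--Rio theorems deliver, and obtaining them requires a careful optimization in the logarithmic Sobolev step that the cruder bounded-differences/martingale approach does not provide. Since this paper only ever uses the inequality with such universal constants and needs no improvement on them, I would not reprove it but cite \cite{talagrand1996new} and \cite{gine2015mathematical} directly; the sketch above is merely the route one would follow if a self-contained argument were required.
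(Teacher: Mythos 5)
Your proposal matches the paper exactly: the paper offers no proof of this lemma, presenting it purely as a citation of Talagrand's inequality in the Bousquet/Klein--Rio form of \cite[Theorems 3.3.9/3.3.10]{gine2015mathematical}, which is precisely what you propose to do. Your additional sketch of the entropy-method derivation and the bookkeeping of constants is a reasonable (if not needed) supplement, so the approaches coincide.
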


\subsection{Sudakov minorization}

The following Sudakov minorization will be useful.
\begin{lemma}[Sudakov minorization \cite{sudakov1969gauss}]\label{lem:sudakov_minor}
	Let $(X_t)_{t \in T}$ be a centered separable Gaussian process, and $\pnorm{t-s}{}^2:=\E(X_t-X_s)^2$. Then
	\begin{align*}
	\E \sup_{t \in T} X_t \geq C^{-1} \sup_{\epsilon>0} \epsilon \sqrt{\log \mathcal{N}(\epsilon,T,\pnorm{\cdot}{})}.
	\end{align*}
	Here $C>0$ is a universal constant.
\end{lemma}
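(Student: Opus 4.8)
The plan is the classical route: reduce to a maximal $\epsilon$-separated subset of $T$, apply a Gaussian comparison inequality to pass to independent Gaussians, and then use the elementary lower bound for the expected maximum of i.i.d.\ standard normals. First I would fix $\epsilon>0$ and pick a maximal collection $t_1,\dots,t_N\in T$ with $\pnorm{t_i-t_j}{}\ge\epsilon$ for all $i\ne j$; maximality forces this collection to be an $\epsilon$-net of $T$, so $N\ge\mathcal{N}(\epsilon,T,\pnorm{\cdot}{})$. Since the process is separable and $\{t_1,\dots,t_N\}\subset T$, we have $\E\sup_{t\in T}X_t\ge\E\max_{1\le i\le N}X_{t_i}$, so it suffices to prove $\E\max_{i\le N}X_{t_i}\gtrsim\epsilon\sqrt{\log N}$.

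For the comparison step I would introduce i.i.d.\ $Y_1,\dots,Y_N\sim\mathcal{N}(0,\epsilon^2/2)$. For $i\ne j$ one has $\E(X_{t_i}-X_{t_j})^2=\pnorm{t_i-t_j}{}^2\ge\epsilon^2=\E(Y_i-Y_j)^2$, with both sides equal to $0$ when $i=j$; as both processes are centered, the Sudakov--Fernique comparison inequality yields $\E\max_i X_{t_i}\ge\E\max_i Y_i=(\epsilon/\sqrt 2)\,\E\max_{i\le N}g_i$, where $g_1,\dots,g_N$ are i.i.d.\ standard normal. (Slepian's lemma gives the same conclusion; a comparison-free argument would instead be Sudakov's original iterative conditioning scheme, which is more delicate.)

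Then I would invoke the standard bound $\E\max_{i\le N}g_i\ge c\sqrt{\log N}$ for every $N\ge1$ and an absolute constant $c>0$: the case $N=1$ is trivial, and for $N\ge2$ one combines $\Prob(\max_i g_i\ge t)=1-(1-\Prob(g_1\ge t))^N$ with the Gaussian tail estimate $\Prob(g_1\ge t)\gtrsim t^{-1}e^{-t^2/2}$ at the choice $t\asymp\sqrt{\log N}$, together with $\int_0^\infty\Prob(\max_i g_i<-u)\,\d u\le\E g_1^{+}<\infty$ to control the negative part. Chaining the two previous displays gives $\E\max_i X_{t_i}\ge (c/\sqrt 2)\,\epsilon\sqrt{\log N}\ge C^{-1}\epsilon\sqrt{\log\mathcal{N}(\epsilon,T,\pnorm{\cdot}{})}$ for a universal $C$, and taking the supremum over $\epsilon>0$ completes the proof.

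The only step with genuine content is the Gaussian comparison; the packing-versus-covering reduction and the lower bound on $\E\max_i g_i$ are routine. Accordingly, the main obstacle is whether to treat Sudakov--Fernique (or Slepian) as a black box or to reprove it; given that this statement sits in an auxiliary ``tools'' appendix and is credited to Sudakov, citing the comparison inequality is the natural choice, and no further obstacle remains.
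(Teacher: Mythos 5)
Your proposal is correct and is exactly the standard proof of Sudakov's minorization (maximal $\epsilon$-separated set, Sudakov--Fernique comparison with i.i.d.\ $\mathcal{N}(0,\epsilon^2/2)$ variables, and the elementary lower bound $\E\max_{i\le N}g_i\gtrsim\sqrt{\log N}$); the paper itself supplies no proof and simply cites Sudakov, so there is nothing to diverge from. The only nitpick is the parenthetical claim that Slepian's lemma "gives the same conclusion": in its classical form Slepian requires equal variances, which need not hold here, so Sudakov--Fernique (or Gordon's extension) is the tool you actually need, and you do invoke it correctly.
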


\subsection{Multiplier inequalities}

We use the following multiplier inequality due to \cite[Theorem 1]{han2017sharp}.

\begin{lemma}\label{lem:multiplier_ineq}
	Suppose that $\xi_1,\ldots,\xi_n$ are i.i.d. mean-zero random variables independent of i.i.d. $X_1,\ldots,X_n$. Let $\{\mathcal{F}_k\}_{k=1}^n$ be a sequence of function classes such that $\mathcal{F}_k \supset \mathcal{F}_n$ for all $1\leq k\leq n$.  Assume further that there exist non-decreasing concave functions $\{\psi_n\}:\R_{\geq 0}\to \R_{\geq 0}$ with $\psi_n(0)=0$ such that
	\begin{align*}
	\E \sup_{f \in \mathcal{F}_k}\biggabs{\sum_{i=1}^k\epsilon_i f(X_i)}\leq \psi_n(k)
	\end{align*}
	holds for all $1\leq k\leq n$. Then
	\begin{align*}
	\E \sup_{f \in \mathcal{F}_n}\biggabs{\sum_{i=1}^n \xi_i f(X_i)}\leq 4\int_0^\infty \psi_n\big(n\cdot \Prob(\abs{\xi_1}>t)\big)\ \d{t}.
	\end{align*}
\end{lemma}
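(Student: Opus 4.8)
The plan is to run the standard argument behind sharp multiplier inequalities: symmetrize to pass to Rademacher multipliers, decompose the absolute values of the multipliers by the layer-cake (distribution-function) representation, use exchangeability of the pairs $(\epsilon_i,X_i)$ to turn each layer into a partial-sum empirical process to which the hypothesis applies, and close with Jensen's inequality via concavity of $\psi_n$. First let $\xi_1',\dots,\xi_n'$ be an independent copy of $\xi_1,\dots,\xi_n$, also independent of the $X_i$. Since $\E\xi_i'=0$, for each fixed $f$ we have $\sum_i\xi_i f(X_i)=\E_{\xi'}\big[\sum_i(\xi_i-\xi_i')f(X_i)\big]$, so Jensen followed by taking suprema and expectations gives
\[
\E\sup_{f\in\mathcal{F}_n}\Big|\sum_{i=1}^n\xi_i f(X_i)\Big|\le \E\sup_{f\in\mathcal{F}_n}\Big|\sum_{i=1}^n(\xi_i-\xi_i')f(X_i)\Big|.
\]
The vector $(\xi_i-\xi_i')_{i=1}^n$ is symmetric and independent of $(X_i)$, hence has the same law as $(\epsilon_i\zeta_i)_{i=1}^n$, where $\zeta_i:=|\xi_i-\xi_i'|\ge0$ are i.i.d., $\epsilon_i$ are i.i.d. Rademacher, and $(\epsilon_i,X_i)_{i=1}^n$ is independent of $(\zeta_i)_{i=1}^n$. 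It therefore suffices to bound $\E\sup_{f\in\mathcal{F}_n}|\sum_i\epsilon_i\zeta_i f(X_i)|$.

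Now condition on $(\zeta_i)_{i=1}^n$, so that $(\epsilon_i,X_i)_{i=1}^n$ remains i.i.d. Writing $\zeta_i=\int_0^\infty\bm{1}_{\zeta_i>t}\,\d t$ and interchanging the finite sum with the integral yields $\sum_i\epsilon_i\zeta_i f(X_i)=\int_0^\infty(\sum_{i\in S_t}\epsilon_i f(X_i))\,\d t$ with $S_t:=\{i:\zeta_i>t\}$, hence $\sup_{f\in\mathcal{F}_n}|\sum_i\epsilon_i\zeta_i f(X_i)|\le\int_0^\infty\sup_{f\in\mathcal{F}_n}|\sum_{i\in S_t}\epsilon_i f(X_i)|\,\d t$. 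For fixed $(\zeta_i)$, the set $S_t$ is deterministic of cardinality $N(t):=\#\{i:\zeta_i>t\}\le n$, and by exchangeability of $(\epsilon_i,X_i)_{i=1}^n$ the process $f\mapsto\sum_{i\in S_t}\epsilon_i f(X_i)$ has the same law as $f\mapsto\sum_{i=1}^{N(t)}\epsilon_i f(X_i)$; since $\mathcal{F}_n\subset\mathcal{F}_{N(t)}$, the hypothesis gives $\E_{\epsilon,X}\sup_{f\in\mathcal{F}_n}|\sum_{i\in S_t}\epsilon_i f(X_i)|\le\psi_n(N(t))$. (Equivalently, one may sort $\zeta_{(1)}\ge\cdots\ge\zeta_{(n)}$, apply summation by parts, and bound the quantity by $\sum_{k=1}^n(\zeta_{(k)}-\zeta_{(k+1)})\psi_n(k)$, which is the same thing.)

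Integrating over $t$, taking expectation over $(\zeta_i)$, and applying Tonelli and then Jensen (concavity of $\psi_n$, together with $\E N(t)=n\Prob(\zeta_1>t)$) gives
\[
\E\sup_{f\in\mathcal{F}_n}\Big|\sum_{i=1}^n\epsilon_i\zeta_i f(X_i)\Big|\le\int_0^\infty\psi_n\big(n\Prob(|\xi_1-\xi_1'|>t)\big)\,\d t.
\]
Finally, to remove the copy $\xi'$ and collect constants, note $\Prob(|\xi_1-\xi_1'|>t)\le2\Prob(|\xi_1|>t/2)$ by a union bound, and $\psi_n(2x)\le2\psi_n(x)$ since $\psi_n$ is concave with $\psi_n(0)=0$; after the substitution $u=t/2$ the last integral is at most $4\int_0^\infty\psi_n(n\Prob(|\xi_1|>u))\,\d u$, which is the asserted inequality.

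The step I expect to require the most care is the exchangeability reduction: one conditions on the multipliers — equivalently on the sets $S_t$, or on the sorting permutation — and must argue that the residual randomness $(\epsilon_i,X_i)$ is still i.i.d., so that a Rademacher sum over \emph{any} fixed index set of size $m$ is, distributionally, a sum over $\{1,\dots,m\}$; this is precisely where the nesting hypothesis $\mathcal{F}_k\supset\mathcal{F}_n$ is used. The remaining technicalities are routine: the measure-zero set of $t$ where ties among the $\zeta_i$ occur, Tonelli for the nonnegative integrand, and invoking concavity of $\psi_n$ only at the single place where the expectation meets $\psi_n(N(t))$.
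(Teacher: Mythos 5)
Your proof is correct, and it is essentially the argument behind the cited result \cite[Theorem 1]{han2017sharp}: the paper itself does not reprove this lemma, and your chain (symmetrization via an independent copy, layer-cake/order-statistics decomposition of the multipliers, exchangeability of $(\epsilon_i,X_i)$ to reduce to partial sums, Jensen via concavity of $\psi_n$, and the union bound plus $\psi_n(2x)\le 2\psi_n(x)$ to recover the constant $4$) is the standard route. Your parenthetical summation-by-parts reformulation is precisely the intermediate bound recorded as Lemma \ref{lem:interpolation_ineq} in the Appendix.
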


The following alternative formulation of the multiplier inequality, proved in \cite[Proposition 1]{han2017sharp}, will also be useful.
\begin{lemma}\label{lem:interpolation_ineq}
	Let $\xi_1,\ldots,\xi_n$ be i.i.d. symmetric mean-zero multipliers independent of i.i.d. samples $X_1,\ldots,X_n$. For any function class $\mathcal{F}$,
	\begin{align*}
	\E \sup_{f \in \mathcal{F}}\biggabs{\sum_{i=1}^n \xi_i f(X_i)}\leq  \E \left[ \sum_{k=1}^n  (\abs{\xi_{(k)}}-\abs{\xi_{(k+1)}}) \E\sup_{f \in \mathcal{F}}\biggabs{\sum_{i=1}^k \epsilon_i f(X_i)}\right]
	\end{align*}
	where $\abs{\xi_{(1)}}\geq \cdots \geq \abs{\xi_{(n)}}\geq \abs{\xi_{(n+1)}}\equiv 0$ are the reversed order statistics for $\{\xi_i\}_{i=1}^n$, and $\epsilon_i$'s are i.i.d. Rademacher random variables.
\end{lemma}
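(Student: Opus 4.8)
The plan is to reduce the multiplier process to a Rademacher process using the symmetry of the $\xi_i$'s, and then to decompose the random weights $|\xi_i|$ along their reversed order statistics via an Abel-summation argument, the $\epsilon_i$'s being the auxiliary i.i.d.\ Rademacher sequence from the Notation section.

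First I would record the elementary decomposition of the weights. Let $r(i)\in\{1,\dots,n\}$ be the rank of $|\xi_i|$ in the decreasing ordering $|\xi_{(1)}|\geq\cdots\geq|\xi_{(n)}|$, with ties broken by index, and let $S_k\equiv\{i:r(i)\leq k\}$ be the set of indices carrying the $k$ largest multipliers, so that $|S_k|=k$. With the convention $|\xi_{(n+1)}|\equiv 0$ one has the telescoping identity
\[
  |\xi_i| \;=\; |\xi_{(r(i))}| \;=\; \sum_{k=1}^n \big(|\xi_{(k)}|-|\xi_{(k+1)}|\big)\,\bm{1}_{\{i\in S_k\}}.
\]
Substituting this into $\sum_{i=1}^n\xi_i f(X_i)$ and writing $\xi_i=\mathrm{sgn}(\xi_i)|\xi_i|$ gives
\[
  \sum_{i=1}^n\xi_i f(X_i) \;=\; \sum_{k=1}^n \big(|\xi_{(k)}|-|\xi_{(k+1)}|\big)\sum_{i\in S_k}\mathrm{sgn}(\xi_i) f(X_i).
\]
Since each gap $|\xi_{(k)}|-|\xi_{(k+1)}|$ is nonnegative, taking $\sup_{f\in\mathcal{F}}$ followed by the triangle inequality yields
\[
  \sup_{f\in\mathcal{F}}\biggabs{\sum_{i=1}^n\xi_i f(X_i)} \;\leq\; \sum_{k=1}^n \big(|\xi_{(k)}|-|\xi_{(k+1)}|\big)\,\sup_{f\in\mathcal{F}}\biggabs{\sum_{i\in S_k}\mathrm{sgn}(\xi_i) f(X_i)}.
\]
Note that a summand on the right vanishes whenever $S_k$ meets $\{i:|\xi_i|=0\}$, because then $|\xi_{(k)}|=|\xi_{(k+1)}|=0$; hence the sign of a zero multiplier is irrelevant.

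Next I would take expectations in two stages. Conditionally on the vector $(|\xi_1|,\dots,|\xi_n|)$, each set $S_k$ and each gap is determined, and — because the $\xi_i$'s are i.i.d.\ symmetric and independent of the $X_i$'s — the signs $\big(\mathrm{sgn}(\xi_i)\big)_{i:|\xi_i|>0}$ are i.i.d.\ Rademacher and independent of $(X_1,\dots,X_n)$. Therefore, for each $k$,
\begin{align*}
  \E\Big[\sup_{f\in\mathcal{F}}\biggabs{\sum_{i\in S_k}\mathrm{sgn}(\xi_i) f(X_i)}\,\Big|\,|\xi_1|,\dots,|\xi_n|\Big] &= \E\sup_{f\in\mathcal{F}}\biggabs{\sum_{i\in S_k}\epsilon_i f(X_i)}\\
  &= \E\sup_{f\in\mathcal{F}}\biggabs{\sum_{i=1}^k\epsilon_i f(X_i)},
\end{align*}
where the first equality replaces the conditionally-Rademacher signs by the independent $\epsilon_i$'s, and the second uses that $S_k$ is a fixed set of cardinality $k$ (given the multipliers) together with exchangeability of the i.i.d.\ $X_i$'s and the i.i.d.\ symmetric $\epsilon_i$'s, so that the law of the symmetrized supremum over any size-$k$ index set coincides with the one over $\{1,\dots,k\}$. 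Plugging this back into the previous display, taking the outer expectation over $(|\xi_1|,\dots,|\xi_n|)$, and using that $\E\sup_{f\in\mathcal{F}}|\sum_{i=1}^k\epsilon_i f(X_i)|$ is deterministic, produces exactly the claimed bound.

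The Abel-summation identity and the symmetrization step are routine; the point that needs care is the last displayed equality, i.e.\ that the random index set $S_k$ is measurable with respect to $(|\xi_i|)_{i=1}^n$ and hence independent of $\big((X_i)_{i=1}^n,(\epsilon_i)_{i=1}^n\big)$, which is what licenses relabelling $S_k$ as $\{1,\dots,k\}$. The only auxiliary bookkeeping is to fix a deterministic tie-breaking rule among equal $|\xi_i|$'s so that $S_k$ is well defined (the choice does not affect the bound), and to invoke the usual measurability conventions on $\mathcal{F}$ for the suprema; neither of these presents a genuine difficulty.
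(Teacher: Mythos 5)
Your proof is correct: the Abel-summation decomposition of $\abs{\xi_i}$ along the gaps of the reversed order statistics, followed by conditioning on $(\abs{\xi_1},\dots,\abs{\xi_n})$ so that the signs become i.i.d.\ Rademacher and the top-$k$ index set can be relabelled as $\{1,\dots,k\}$ by exchangeability, is exactly the argument behind \cite[Proposition 1]{han2017sharp}, to which the paper defers for this lemma. You also correctly handle the only delicate points (zero multipliers are killed by vanishing gaps, and $S_k$ is measurable with respect to the moduli), so nothing further is needed.
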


\section*{Acknowledgments}
The major part of this work (materials presented before Section \ref{section:binary_classification} and their proofs) is based on Chapter 4 of the author's University of Washington Ph.D. thesis in 2018. The author would like to thank Jon Wellner and Cun-Hui Zhang for helpful discussion and encouragements. He would also like to thank four referees, an Associate Editor and the Editor for their very helpful comments and suggestions that significantly improved the quality of the paper. 
%Part of this work is done during the author's visits to University of Cambridge in the spring of 2016 and 2018. The author would like to thank the institute for hospitality.

%\bibliographystyle{alpha}
\bibliographystyle{amsalpha}
\bibliography{mybib}

\providecommand{\bysame}{\leavevmode\hbox to3em{\hrulefill}\thinspace}
\providecommand{\MR}{\relax\ifhmode\unskip\space\fi MR }
% \MRhref is called by the amsart/book/proc definition of \MR.
\providecommand{\MRhref}[2]{%
  \href{http://www.ams.org/mathscinet-getitem?mr=#1}{#2}
}
\providecommand{\href}[2]{#2}
\begin{thebibliography}{HWCS19}

\bibitem[Ale87]{alexander1987rates}
Kenneth~S. Alexander, \emph{Rates of growth and sample moduli for weighted
  empirical processes indexed by sets}, Probab. Theory Related Fields
  \textbf{75} (1987), no.~3, 379--423. \MR{890285}

\bibitem[Bar16]{baraud2016bounding}
Yannick Baraud, \emph{Bounding the expectation of the supremum of an empirical
  process over a (weak) {VC}-major class}, Electron. J. Stat. \textbf{10}
  (2016), no.~2, 1709--1728. \MR{3522658}

\bibitem[BBM99]{barron1999risk}
Andrew Barron, Lucien Birg{\'e}, and Pascal Massart, \emph{Risk bounds for
  model selection via penalization}, Probab. Theory Related Fields \textbf{113}
  (1999), no.~3, 301--413. \MR{1679028 (2000k:62049)}

\bibitem[Bir83]{birge1983approximation}
Lucien Birg\'e, \emph{Approximation dans les espaces m\'etriques et th\'eorie
  de l'estimation}, Z. Wahrsch. Verw. Gebiete \textbf{65} (1983), no.~2,
  181--237. \MR{722129}

\bibitem[BM93]{birge1993rates}
Lucien Birg{\'e} and Pascal Massart, \emph{Rates of convergence for minimum
  contrast estimators}, Probab. Theory Related Fields \textbf{97} (1993),
  no.~1-2, 113--150. \MR{1240719 (94m:62095)}

\bibitem[Bru13]{brunel2013adaptive}
Victor-Emmanuel Brunel, \emph{Adaptive estimation of convex polytopes and
  convex sets from noisy data}, Electron. J. Stat. \textbf{7} (2013),
  1301--1327. \MR{3063609}

\bibitem[CDSS18]{carpenter2018near}
Timothy Carpenter, Ilias Diakonikolas, Anastasios Sidiropoulos, and Alistair
  Stewart, \emph{Near-optimal sample complexity bounds for maximum likelihood
  estimation of multivariate log-concave densities}, Conference on Learning
  Theory, 2018, pp.~1--29.

\bibitem[CGS18]{chatterjee2018matrix}
Sabyasachi Chatterjee, Adityanand Guntuboyina, and Bodhisattva Sen, \emph{On
  matrix estimation under monotonicity constraints}, Bernoulli \textbf{24}
  (2018), no.~2, 1072--1100. \MR{3706788}

\bibitem[Cha14]{chatterjee2014new}
Sourav Chatterjee, \emph{A new perspective on least squares under convex
  constraint}, Ann. Statist. \textbf{42} (2014), no.~6, 2340--2381.
  \MR{3269982}

\bibitem[DGL96]{devroye1996probabilistic}
Luc Devroye, L\'aszl\'o Gy\"orfi, and G\'abor Lugosi, \emph{A {P}robabilistic
  {T}heory of {P}attern {R}ecognition}, Applications of Mathematics (New York),
  vol.~31, Springer-Verlag, New York, 1996. \MR{1383093}

\bibitem[DK19]{dagan2019log}
Yuval Dagan and Gil Kur, \emph{The log-concave maximum likelihood estimator is
  optimal in high dimensions}, arXiv preprint arXiv:1903.05315 (2019).

\bibitem[Dud82]{dudley1982empirical}
R.~M. Dudley, \emph{Empirical and {P}oisson processes on classes of sets or
  functions too large for central limit theorems}, Z. Wahrsch. Verw. Gebiete
  \textbf{61} (1982), no.~3, 355--368. \MR{679680}

\bibitem[Dud14]{dudley1999uniform}
\bysame, \emph{Uniform central limit theorems}, second ed., Cambridge Studies
  in Advanced Mathematics, vol. 142, Cambridge University Press, New York,
  2014. \MR{3445285}

\bibitem[DW16]{doss2013global}
Charles~R. Doss and Jon~A. Wellner, \emph{Global rates of convergence of the
  {MLE}s of log-concave and {$s$}-concave densities}, Ann. Statist. \textbf{44}
  (2016), no.~3, 954--981. \MR{3485950}

\bibitem[GK06]{gine2006concentration}
Evarist Gin{\'e} and Vladimir Koltchinskii, \emph{Concentration inequalities
  and asymptotic results for ratio type empirical processes}, Ann. Probab.
  \textbf{34} (2006), no.~3, 1143--1216. \MR{2243881}

\bibitem[GKW03]{gine2003ratio}
Evarist Gin{\'e}, Vladimir Koltchinskii, and Jon~A. Wellner, \emph{Ratio limit
  theorems for empirical processes}, Stochastic inequalities and applications,
  Progr. Probab., vol.~56, Birkh\"auser, Basel, 2003, pp.~249--278.
  \MR{2073436}

\bibitem[GLZ00]{gine2000exponential}
Evarist Gin{\'e}, Rafa{\l} Lata{\l}a, and Joel Zinn, \emph{Exponential and
  moment inequalities for {$U$}-statistics}, High dimensional probability, {II}
  ({S}eattle, {WA}, 1999), Progr. Probab., vol.~47, Birkh\"auser Boston,
  Boston, MA, 2000, pp.~13--38. \MR{1857312}

\bibitem[GN16]{gine2015mathematical}
Evarist Gin\'{e} and Richard Nickl, \emph{Mathematical foundations of
  infinite-dimensional statistical models}, Cambridge Series in Statistical and
  Probabilistic Mathematics, [40], Cambridge University Press, New York, 2016.
  \MR{3588285}

\bibitem[Gre81]{grenander1981abstract}
Ulf Grenander, \emph{Abstract {I}nference}, John Wiley \&\ Sons, Inc., New
  York, 1981, Wiley Series in Probability and Mathematical Statistics.
  \MR{599175}

\bibitem[Gun12]{guntuboyina2012optimal}
Adityanand Guntuboyina, \emph{Optimal rates of convergence for convex set
  estimation from support functions}, Ann. Statist. \textbf{40} (2012), no.~1,
  385--411. \MR{3014311}

\bibitem[GW07]{gao2007entropy}
Fuchang Gao and Jon~A. Wellner, \emph{Entropy estimate for high-dimensional
  monotonic functions}, J. Multivariate Anal. \textbf{98} (2007), no.~9,
  1751--1764. \MR{2392431}

\bibitem[HW16]{han2015approximation}
Qiyang Han and Jon~A. Wellner, \emph{Approximation and estimation of
  {$s$}-concave densities via {R}\'{e}nyi divergences}, Ann. Statist.
  \textbf{44} (2016), no.~3, 1332--1359. \MR{3485962}

\bibitem[HW19]{han2017sharp}
\bysame, \emph{Convergence rates of least squares regression estimators with
  heavy-tailed errors}, Ann. Statist. \textbf{47} (2019), no.~4, 2286--2319.
  \MR{3953452}

\bibitem[HWCS19]{han2017isotonic}
Qiyang Han, Tengyao Wang, Sabyasachi Chatterjee, and Richard~J. Samworth,
  \emph{Isotonic regression in general dimensions}, Ann. Statist. \textbf{47}
  (2019), no.~5, 2440--2471. \MR{3988762}

\bibitem[KGGS20]{kur2020convex}
Gil Kur, Fuchang Gao, Adityanand Guntuboyina, and Bodhisattva Sen, \emph{Convex
  regression in multidimensions: Suboptimality of least squares estimators},
  arXiv preprint arXiv:2006.02044 (2020).

\bibitem[Kol06]{koltchinskii2006local}
Vladimir Koltchinskii, \emph{Local {R}ademacher complexities and oracle
  inequalities in risk minimization}, Ann. Statist. \textbf{34} (2006), no.~6,
  2593--2656. \MR{2329442}

\bibitem[KS16]{kim2016global}
Arlene K.~H. Kim and Richard~J. Samworth, \emph{Global rates of convergence in
  log-concave density estimation}, Ann. Statist. \textbf{44} (2016), no.~6,
  2756--2779. \MR{3576560}

\bibitem[KT92]{korostelev1992asymtotically}
A.~P. Korostel\"ev and A.~B. Tsybakov, \emph{Asymptotically minimax image
  reconstruction problems}, Topics in nonparametric estimation, Adv. Soviet
  Math., vol.~12, Amer. Math. Soc., Providence, RI, 1992, pp.~45--86.
  \MR{1191691}

\bibitem[KT93]{korestelev1993minimax}
\bysame, \emph{Minimax theory of image reconstruction}, Lecture Notes in
  Statistics, vol.~82, Springer-Verlag, New York, 1993. \MR{1226450}

\bibitem[LC73]{lecam1973convergence}
Lucien Le~Cam, \emph{Convergence of estimates under dimensionality
  restrictions}, Ann. Statist. \textbf{1} (1973), 38--53. \MR{0334381}

\bibitem[Lec07]{lecue2007simultaneous}
Guillaume Lecu\'e, \emph{Simultaneous adaptation to the margin and to
  complexity in classification}, Ann. Statist. \textbf{35} (2007), no.~4,
  1698--1721. \MR{2351102}

\bibitem[MN06]{massart2006risk}
Pascal Massart and \'Elodie N\'ed\'elec, \emph{Risk bounds for statistical
  learning}, Ann. Statist. \textbf{34} (2006), no.~5, 2326--2366. \MR{2291502}

\bibitem[MSW83]{mason1983strong}
David~M. Mason, Galen~R. Shorack, and Jon~A. Wellner, \emph{Strong limit
  theorems for oscillation moduli of the uniform empirical process}, Z.
  Wahrsch. Verw. Gebiete \textbf{65} (1983), no.~1, 83--97. \MR{717935}

\bibitem[MT95]{mammen1995asymptotical}
Enno Mammen and Alexandre~B. Tsybakov, \emph{Asymptotical minimax recovery of
  sets with smooth boundaries}, Ann. Statist. \textbf{23} (1995), no.~2,
  502--524. \MR{1332579}

\bibitem[MT99]{mammen1999smooth}
\bysame, \emph{Smooth discrimination analysis}, Ann. Statist. \textbf{27}
  (1999), no.~6, 1808--1829. \MR{1765618}

\bibitem[Oss87]{ossiander1987central}
Mina Ossiander, \emph{A central limit theorem under metric entropy with {$L_2$}
  bracketing}, Ann. Probab. \textbf{15} (1987), no.~3, 897--919. \MR{893905}

\bibitem[Pol02]{pollard2002maximal}
David Pollard, \emph{Maximal inequalities via bracketing with adaptive
  truncation}, Ann. Inst. H. Poincar\'{e} Probab. Statist. \textbf{38} (2002),
  no.~6, 1039--1052. \MR{1955351}

\bibitem[RWD88]{robertson1988order}
Tim Robertson, F.~T. Wright, and R.~L. Dykstra, \emph{Order restricted
  statistical inference}, Wiley Series in Probability and Mathematical
  Statistics: Probability and Mathematical Statistics, John Wiley \& Sons,
  Ltd., Chichester, 1988. \MR{961262}

\bibitem[Sau12]{saumard2012optimal}
Adrien Saumard, \emph{Optimal upper and lower bounds for the true and empirical
  excess risks in heteroscedastic least-squares regression}, Electron. J. Stat.
  \textbf{6} (2012), 579--655. \MR{2988421}

\bibitem[Stu82]{stute1982oscillation}
Winfried Stute, \emph{The oscillation behavior of empirical processes}, Ann.
  Probab. \textbf{10} (1982), no.~1, 86--107. \MR{637378}

\bibitem[Stu84]{stute1984oscillation}
\bysame, \emph{The oscillation behavior of empirical processes: the
  multivariate case}, Ann. Probab. \textbf{12} (1984), no.~2, 361--379.
  \MR{735843}

\bibitem[Sud69]{sudakov1969gauss}
V.~N. Sudakov, \emph{Gauss and {C}auchy measures and {$\varepsilon$}-entropy},
  Dokl. Akad. Nauk SSSR \textbf{185} (1969), 51--53. \MR{0247034}

\bibitem[SW82]{shorack1982limit}
Galen~R. Shorack and Jon~A. Wellner, \emph{Limit theorems and inequalities for
  the uniform empirical process indexed by intervals}, Ann. Probab. \textbf{10}
  (1982), no.~3, 639--652. \MR{659534}

\bibitem[SW10]{seregin2010nonparametric}
Arseni Seregin and Jon~A. Wellner, \emph{Nonparametric estimation of
  multivariate convex-transformed densities}, Ann. Statist. \textbf{38} (2010),
  no.~6, 3751--3781. \MR{2766867 (2012b:62126)}

\bibitem[Tal96]{talagrand1996new}
Michel Talagrand, \emph{New concentration inequalities in product spaces},
  Invent. Math. \textbf{126} (1996), no.~3, 505--563. \MR{1419006}

\bibitem[Tsy04]{tsybakov2004optimal}
Alexandre~B. Tsybakov, \emph{Optimal aggregation of classifiers in statistical
  learning}, Ann. Statist. \textbf{32} (2004), no.~1, 135--166. \MR{2051002}

\bibitem[vdG87]{van1987new}
Sara van~de Geer, \emph{A new approach to least-squares estimation, with
  applications}, Ann. Statist. \textbf{15} (1987), no.~2, 587--602. \MR{888427}

\bibitem[vdG90]{van1990estimating}
\bysame, \emph{Estimating a regression function}, Ann. Statist. \textbf{18}
  (1990), no.~2, 907--924. \MR{1056343}

\bibitem[vdG93]{van1993hellinger}
\bysame, \emph{Hellinger-consistency of certain nonparametric maximum
  likelihood estimators}, Ann. Statist. \textbf{21} (1993), no.~1, 14--44.
  \MR{1212164}

\bibitem[vdG95]{van1995method}
\bysame, \emph{The method of sieves and minimum contrast estimators}, Math.
  Methods Statist. \textbf{4} (1995), no.~1, 20--38. \MR{1324688}

\bibitem[vdG00]{van2000empirical}
\bysame, \emph{Applications of {E}mpirical {P}rocess {T}heory}, Cambridge
  Series in Statistical and Probabilistic Mathematics, vol.~6, Cambridge
  University Press, Cambridge, 2000. \MR{1739079 (2001h:62002)}

\bibitem[vdGW17]{van2015concentration}
Sara van~de Geer and Martin~J. Wainwright, \emph{On concentration for
  (regularized) empirical risk minimization}, Sankhya A \textbf{79} (2017),
  no.~2, 159--200. \MR{3707417}

\bibitem[vdV96a]{van1996efficient}
Aad van~der Vaart, \emph{Efficient maximum likelihood estimation in
  semiparametric mixture models}, Ann. Statist. \textbf{24} (1996), no.~2,
  862--878. \MR{1394993}

\bibitem[vdV96b]{van1996new}
\bysame, \emph{New {D}onsker classes}, Ann. Probab. \textbf{24} (1996), no.~4,
  2128--2140. \MR{1415244}

\bibitem[vdVW96]{van1996weak}
Aad van~der Vaart and Jon~A. Wellner, \emph{Weak {C}onvergence and {E}mpirical
  {P}rocesses}, Springer Series in Statistics, Springer-Verlag, New York, 1996.
  \MR{1385671 (97g:60035)}

\bibitem[Wel78]{wellner1978limit}
Jon~A. Wellner, \emph{Limit theorems for the ratio of the empirical
  distribution function to the true distribution function}, Z. Wahrsch. Verw.
  Gebiete \textbf{45} (1978), no.~1, 73--88. \MR{0651392}

\bibitem[WS95]{wong1995probability}
Wing~Hung Wong and Xiaotong Shen, \emph{Probability inequalities for likelihood
  ratios and convergence rates of sieve {MLE}s}, Ann. Statist. \textbf{23}
  (1995), no.~2, 339--362. \MR{1332570}

\bibitem[YB99]{yang1999information}
Yuhong Yang and Andrew Barron, \emph{Information-theoretic determination of
  minimax rates of convergence}, Ann. Statist. \textbf{27} (1999), no.~5,
  1564--1599. \MR{1742500}

\end{thebibliography}

\end{document}